\newcommand\CA{{\mathcal A}} 
\newcommand\CB{{\mathcal B}}
\newcommand\BBC{{\mathbb C}}
\newcommand\BBZ{{\mathbb Z}}
\newcommand {\GAP}{\textsf{GAP}}  
\newcommand {\CHEVIE}{\textsf{CHEVIE}}  
\newcommand {\Sage}{\textsf{SAGE}}
\newcommand\codim{\operatorname{codim}}
\newcommand\Der{{\operatorname{Der}}}
\newcommand\GL{\operatorname{GL}}
\newcommand\pdeg{\operatorname{pdeg}}
\newcommand\Bool{\operatorname{\mathbb B}}
\newcommand\inverse{^{-1}}
\renewcommand\th{{^{\text{th}}}}
\numberwithin{equation}{section}
\theoremstyle{plain}
\newtheorem{lemma}[equation]{Lemma}
\newtheorem{theorem}[equation]{Theorem}
\newtheorem{corollary}[equation]{Corollary}
\newtheorem{proposition}[equation]{Proposition}
\theoremstyle{definition}
\newtheorem{defn}[equation]{Definition}
\newtheorem{remark}[equation]{Remark}
\newtheorem{example}[equation]{Example}
\subjclass[2010]{Primary 20F55, 52C35, 14N20; Secondary 13N15}
\begin{document}

\title[Supersolvable restrictions of reflection arrangements]
{Supersolvable restrictions of reflection arrangements}

\author[N. Amend]{Nils Amend}
\address
{Fakult\"at f\"ur Mathematik,
Ruhr-Universit\"at Bochum,
D-44780 Bochum, Germany}
\email{nils.amend@rub.de}

\author[T. Hoge]{Torsten Hoge}
\address
{Institut f\"ur Algebra, Zahlentheorie und Diskrete Mathematik,
Fakult\"at f\"ur Mathematik und Physik,
Leibniz Universit\"at Hannover,
Welfengarten 1,
30167 Hannover, Germany}
\email{hoge@math.uni-hannover.de}

\author[G. R\"ohrle]{Gerhard R\"ohrle}
\address
{Fakult\"at f\"ur Mathematik,
Ruhr-Universit\"at Bochum,
D-44780 Bochum, Germany}
\email{gerhard.roehrle@rub.de}

\keywords{Complex reflection groups,
reflection arrangements, free arrangements, 
supersolvable arrangements}

\allowdisplaybreaks

\begin{abstract}
Let $\CA = (\CA,V)$ be a complex hyperplane arrangement and let
$L(\CA)$ denote its intersection lattice.
The arrangement $\CA$ is called supersolvable, provided 
its lattice $L(\CA)$ is supersolvable.
For $X$ in $L(\CA)$, 
it is known that the restriction $\CA^X$
is supersolvable provided $\CA$ is.

Suppose that $W$ is a finite, unitary reflection group acting on the complex 
vector space $V$. Let $\CA = (\CA(W), V)$ be its associated
hyperplane arrangement.
Recently, the last two authors
classified all supersolvable reflection arrangements. 
Extending this work, 
the aim of this note is to 
determine all supersolvable restrictions 
of reflection arrangements.
It turns out that apart from the obvious 
restrictions of supersolvable reflection arrangements there are only a
few additional instances. 
Moreover, in a recent paper 
we classified all inductively free restrictions $\CA(W)^X$ of
reflection arrangements $\CA(W)$. Since every 
supersolvable arrangement is inductively free,  
the supersolvable restrictions $\CA(W)^X$ of
reflection arrangements $\CA(W)$ form a natural subclass of 
the class of  inductively free restrictions $\CA(W)^X$.

Finally, we characterize  
the irreducible supersolvable restrictions  of reflection arrangements 
by the presence of modular elements of dimension $1$ in their 
intersection lattice.
This in turn leads to the surprising fact 
that reflection arrangements as well as their
restrictions are
supersolvable if and only if they are strictly linearly fibered.
\end{abstract}

\maketitle


\section{Introduction}

Let $\CA  = (\CA,V)$ be a complex hyperplane arrangement and let
$L(\CA)$ denote its intersection lattice.
We say that $\CA$ is \emph{supersolvable}, provided $L(\CA)$
is supersolvable, see Definition \ref{def:super}.
Thanks to \cite[Prop.\ 3.2]{stanley:super} (see Corollary \ref{cor:stanley}), for $X$ in $L(\CA)$, the 
restriction $\CA^X$ of a supersolvable arrangement $\CA$ is itself
again supersolvable.

Now suppose that $W$ is a finite, unitary 
reflection group acting on the complex 
vector space $V$.
Let $\CA = (\CA(W),V)$ be the associated 
hyperplane arrangement of $W$.
In \cite[Thm.\ 1.2]{hogeroehrle:super}, we classified all 
supersolvable reflection arrangements.
Extending this earlier work, 
the aim of this note is to 
classify all supersolvable restrictions 
$\CA^X$ for $\CA$ a reflection arrangement.
Since supersolvability is a rather strong condition,
not unexpectedly, there are only 
very few additional instances
apart from the obvious 
restrictions of supersolvable reflection arrangements. 

Moreover, similar to the case of supersolvable reflection arrangements, 
we are able to characterize 
the irreducible arrangements in this class merely 
by the presence 
of modular elements of dimension $1$ in their intersection lattice
(see Theorem \ref{super-restriction-rank2}).
This in turn leads to the unexpected, remarkable fact 
that reflection arrangements as well as their
restrictions are
supersolvable if and only if they are strictly linearly fibered
(see Corollary \ref{cor:linearlyfiberd}).

The classification 
of the irreducible, finite, complex reflection groups $W$ due to 
Shephard and Todd \cite{shephardtodd}
states that each such group belongs to one of two types.
Namely, either $W$ belongs to the infinite  
three-parameter family $G(r,p,\ell)$ of monomial groups, or else is one of  
an additional 34 exceptional groups, 
simply named $G_4$ up to $G_{37}$.
As a result, proofs of properties 
of $W$ and its arrangement $\CA(W)$ frequently also do 
come in two flavors: conceptional, uniform arguments for the
infinite families on the one hand, and adhoc and  
mere computational techniques for the exceptional instances,
on the other, 
e.g.\ see \cite[\S 6, App.\ B, App.\ C]{orlikterao:arrangements}
and \cite{orliksolomon:unitaryreflectiongroups}.
This dichotomy also prevails the statements
and proofs of this paper.

First we recall the main result from \cite[Thm.\ 1.2]{hogeroehrle:super}:

\begin{theorem}
\label{super}
For $W$ a finite
complex reflection group,  
$\CA(W)$ is 
supersolvable if and only if 
any irreducible factor of $W$ is
of rank at most $2$, or 
is isomorphic either to 
a Coxeter group 
of type $A_\ell$ or $B_\ell$ for $\ell \ge 3$, or to a  
monomial group $G(r,p,\ell)$
for $r, \ell \ge 3$  and $p \ne r$.
\end{theorem}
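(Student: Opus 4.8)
The plan is to reduce to irreducible $W$ and then to run through the Shephard--Todd classification, handling the monomial series $G(r,p,\ell)$ uniformly and the $34$ exceptional groups by inspection. The reduction is immediate: if $W = W_1 \times \cdots \times W_k$ is the decomposition of $W$ into irreducible factors, then $\CA(W)$ is the product arrangement $\CA(W_1) \times \cdots \times \CA(W_k)$, whence $L(\CA(W)) = L(\CA(W_1)) \times \cdots \times L(\CA(W_k))$; and a product of bounded lattices carries a maximal chain of modular elements if and only if each factor does, since the modular elements of a product are exactly the products of modular elements and one interleaves the individual flags. Thus $\CA(W)$ is supersolvable precisely when every $\CA(W_i)$ is, which accounts for the ``if any irreducible factor'' formulation. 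Moreover every arrangement of rank at most $2$ is supersolvable: in a lattice of height $\le 2$ a maximal chain is $\hat 0 \lessdot H \lessdot \hat 1$ with $H$ an atom, and $\hat 0$, $\hat 1$ and every atom of a geometric lattice are modular. So it remains to decide supersolvability for each irreducible $W$ of rank $\ell \ge 3$, and for this I would use throughout the recursive criterion (see \cite{orlikterao:arrangements}): for $\ell \ge 3$, $\CA$ is supersolvable if and only if $L(\CA)$ has a modular coatom $X$ (an element of rank $\ell - 1$) for which the localization $\CA_X$ is supersolvable; for $\ell = 3$ this says simply that $L(\CA)$ possesses a modular rank-$2$ flat $p$, equivalently a rank-$2$ flat $p$ with $p + q \in \CA$ for every other rank-$2$ flat $q$.

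For the supersolvable cases one exhibits explicit modular flags. For the braid arrangement (Coxeter type $A_\ell$), the chain of diagonal flats $X_k = \{x_1 = \cdots = x_{k+1}\}$, $0 \le k \le \ell$, is the classical modular flag. For Coxeter type $B_\ell$ and, more generally, for $\CA(G(r,p,\ell))$ with $p \ne r$ --- which as an arrangement is nothing other than $\CA(G(r,1,\ell))$, because the coordinate hyperplane $\{x_i = 0\}$ belongs to $\CA(G(r,p,\ell))$ exactly when $p \ne r$ --- a modular flag is built from the coordinate hyperplanes together with their successive intersections with diagonal flats; equivalently, these arrangements are of fiber type. Since the supersolvability of the $A_\ell$ and $B_\ell$ Coxeter arrangements and of $\CA(G(r,1,\ell))$ is already recorded in the literature (see \cite{orlikterao:arrangements}), this direction amounts to assembling those references together with the product and rank-$\le 2$ reductions above.

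The main obstacle is the negative direction: to show $\CA(W)$ is \emph{not} supersolvable for $W$ of Coxeter type $D_\ell$ with $\ell \ge 4$, for $W = G(r,r,\ell)$ with $r \ge 3$ and $\ell \ge 3$, and for every exceptional $W$ of rank $\ge 3$, namely $G_{23} = H_3$, $G_{24}, \dots, G_{27}$, $G_{28} = F_4$, $G_{29}$, $G_{30} = H_4$, $G_{31}$, $G_{32}$, $G_{33}$, $G_{34}$, $G_{35} = E_6$, $G_{36} = E_7$ and $G_{37} = E_8$. For the monomial series the plan is to argue directly in $L(\CA(G(r,r,\ell)))$ that no maximal chain of modular elements exists: the absence of coordinate hyperplanes forces that, for any candidate modular coatom $X$, there is a rank-$2$ flat $Y$ for which the subspace $X + Y$ has codimension $1$ but is not a hyperplane of $\CA(G(r,r,\ell))$, so that the meet $X \wedge Y$ drops to $\hat 0$ and the modular rank identity $r(X \vee Y) + r(X \wedge Y) = r(X) + r(Y)$ fails. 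Carrying this out for \emph{every} potential modular coatom, uniformly across the series $G(r,r,\ell)$ (and for $D_\ell = G(2,2,\ell)$ with $\ell \ge 4$), is the technical heart of the argument, and I expect it to be the single hardest step --- in particular because one cannot circumvent it by inducting on $\ell$ through restrictions: the rank-$3$ restrictions of $\CA(G(r,r,\ell))$ typically acquire a coordinate hyperplane and thereby become supersolvable, so non-supersolvability is not inherited downwards, whereas supersolvability always is by \cite{stanley:super}.

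For the exceptional groups I would establish non-supersolvability by direct verification. In rank $3$ (the groups $G_{23}$ through $G_{27}$) one checks --- by hand, or with the help of \GAP{} and \CHEVIE{} --- that no rank-$2$ flat $p$ of $L(\CA(W))$ is modular, i.e.\ that for each such $p$ there is a rank-$2$ flat $q$ with $p + q \notin \CA(W)$; this is a finite computation once the reflecting hyperplanes and their pairwise intersections are tabulated. In ranks $4$ through $8$ one argues similarly, reducing where possible to the rank-$3$ case via the restriction property \cite{stanley:super} --- it suffices to exhibit a single flat $X$ with $\CA(W)^X$ a non-supersolvable rank-$3$ arrangement --- and otherwise checking directly that $L(\CA(W))$ admits no modular coatom with supersolvable localization; in all cases the verification is finite and is carried out with \GAP{} and \CHEVIE{}. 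The systematic bookkeeping for these fifteen exceptional arrangements, alongside the uniformity of the $D_\ell$ and $G(r,r,\ell)$ argument, are the two points where I expect the proof to demand the most care.
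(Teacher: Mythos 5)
Your overall architecture --- reduction to irreducible factors via products, triviality in rank at most $2$, the identification $\CA(G(r,p,\ell)) = \CA(G(r,1,\ell))$ for $p \ne r$, explicit modular (coordinate) flags for the supersolvable cases, and a case-by-case treatment of the Shephard--Todd list for the rest --- is sound, and it is essentially the route of \cite{hogeroehrle:super}, which the present paper merely cites for Theorem \ref{super}; your recursive modular-coatom criterion is also valid. The difficulty is that the part you yourself identify as the heart of the matter, the negative direction, is left unexecuted, and the one concrete strategic claim you make about it is incorrect.

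You assert that non-supersolvability of $\CA(G(r,r,\ell))$ cannot be detected through restrictions because the rank-$3$ restrictions ``acquire a coordinate hyperplane and thereby become supersolvable''. That is false: for $\ell \ge 4$ the restriction of $\CA(G(r,r,\ell))$ to the flat $x_1 = \dots = x_{\ell-2}$ is $\CA^1_3(r)$, which has exactly one coordinate hyperplane and is \emph{not} supersolvable --- see Lemma \ref{lem:intermediates}, proved via the exponent criterion of Lemma \ref{lem:dim3notsuper} (here $\exp = \{1, r+1, 2r-1\}$ while every rank-$2$ flat lies on at most $r+1$ hyperplanes), a fact going back to \cite[Ex.\ 5.5]{jambuterao:free} for $r = 2$. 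Since supersolvability is inherited by restrictions (Corollary \ref{cor:stanley}, resting on Lemma \ref{lem:modularrestrictions}), this single restriction disposes of $D_\ell$ ($\ell \ge 4$) and $G(r,r,\ell)$ ($\ell \ge 4$) at once, and the same numerical criterion settles the remaining base case $G(r,r,3)$, $r \ge 3$, whose exponents are $\{1, r+1, 2r-2\}$ while every rank-$2$ flat lies on at most $\max(3,r) \le r < r+1$ hyperplanes. In your plan, by contrast, the whole infinite series hinges on a ``uniform'' verification that no coatom of any $\CA(G(r,r,\ell))$ is modular, which you do not carry out, precisely because you ruled out the restriction route on a false premise (a route you do allow yourself for the exceptional groups). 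Likewise the fifteen exceptional cases are only promised to machine verification, with no criterion stated that would make those checks tractable; the exponent bound of Lemma \ref{lem:dim3notsuper} together with downward heredity is what does this work in the cited proof. As written, therefore, the ``only if'' half of the classification --- its actual content --- is not established.
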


It is easy to see that any central 
arrangement of rank at most $2$ is supersolvable, 
cf.\ \cite[Rem.\ 2.3]{hogeroehrle:super}. 
Thus we focus in the sequel on restrictions $\CA^X$ with $\dim X \ge 3$.

In \cite[Thm.\ 1.2]{amendhogeroehrle:indfree},
we classified all inductively free restrictions $\CA^X$ of
reflection arrangements $\CA$. Since a
supersolvable arrangement is inductively free, 
thanks to work of Jambu and Terao  \cite[Thm.\ 4.2]{jambuterao:free}
(see Theorem \ref{thm:superfree}),
the supersolvable restrictions $\CA^X$ of
reflection arrangements $\CA$ form a natural subclass of 
the inductively free restrictions. 

In order to state \cite[Thm.\ 1.2]{amendhogeroehrle:indfree}, we
require a  bit more notation.
For fixed $r, \ell \ge 2$ and 
$0 \le k \le \ell$, we denote by $\CA_\ell^k(r)$ the 
sequence of intermediate arrangements that interpolate
between the two reflection arrangements 
$\CA(G(r,r,\ell)) =: \CA_\ell^0(r)$ and $\CA(G(r,1,\ell)) =: \CA_\ell^\ell(r)$,
defined in \cite[\S 2]{orliksolomon:unitaryreflectiongroups}
(see also \cite[\S 6.4]{orlikterao:arrangements}).
Note that $\CA_\ell^k(r)$ is not a reflection arrangement for $k \ne 0, \ell$. 
Also note that each of the arrangements 
does arise as the restriction $\CA(W)^X$ for suitable 
$W$ and $X$, e.g.\ see \cite[Prop.\ 6.84]{orlikterao:arrangements}.
Here is \cite[Thm.\ 1.2]{amendhogeroehrle:indfree}.

\begin{theorem}
\label{thm:indfree}
Let $W$ be a finite, irreducible, complex 
reflection group with reflection arrangement 
$\CA = \CA(W)$ and let $X \in L(\CA)$. 
The restricted arrangement $\CA^X$ is inductively free 
if and only if one of the following holds:
\begin{itemize}
\item[(i)] 
$\CA$ is inductively free;
\item[(ii)] 
$W = G(r,r,\ell)$ and 
$\CA^X \cong \CA^k_p(r)$, where $p = \dim X$ and $p - 2 \leq k \leq p$; 
\item[(iii)] 
$W$ is one of $G_{24}, G_{27}, G_{29}, G_{31}, G_{33}$, or $G_{34}$ and $X \in L(\CA) \setminus \{V\}$ with  $\dim X \leq 3$.
\end{itemize}
\end{theorem}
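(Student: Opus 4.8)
I would follow the standard dichotomy between the infinite family $G(r,p,\ell)$ and the $34$ exceptional groups, and in each case decide membership of $\CA^X$ in $\CIF$ from three inputs: the known classification of inductively free reflection arrangements (so that clause~(i) already accounts for every $W$ with $\CA(W)\in\CIF$); Terao's Addition--Deletion Theorem for the class $\CIF$, to build the inductive chains in the affirmative cases; and, for the negative cases, the elementary necessary condition that a nonempty $\CB\in\CIF$ admits some $H\in\CB$ with $|\CB|-|\CB^H|\in\exp(\CB)$ and $\CB\setminus\{H\},\CB^H\in\CIF$ --- so if no hyperplane $H$ of $\CB$ satisfies $|\CB|-|\CB^H|\in\exp(\CB)$ then $\CB\notin\CIF$. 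As in the supersolvable case one reduces immediately to $\dim X\ge 3$ (central arrangements of rank $\le 2$ lie in $\CIF$), and since $\CIF$-membership is controlled by the irreducible components, it suffices to identify the irreducible components of each restriction, which one reads off from \cite[App.~C]{orlikterao:arrangements} for the exceptional groups and from \cite[\S 2]{orliksolomon:unitaryreflectiongroups} for the family.

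\textbf{The family $G(r,p,\ell)$.} Since $\CA(G(r,p,\ell))$ depends only on whether $p=r$, there are two subcases. If $p<r$, then $\CA(G(r,p,\ell))=\CA(G(r,1,\ell))$, which is supersolvable by Theorem~\ref{super}, hence lies in $\CIF$, and all its restrictions are supersolvable by Stanley's theorem \cite[Prop.~3.2]{stanley:super}; so clause~(i) holds. If $p=r$ with $r\le 2$, then $\CA$ is a Coxeter arrangement of type $D_\ell$ (or has rank $\le 2$), so $\CA\in\CIF$ and its restrictions are products of Coxeter arrangements, again all in $\CIF$; clause~(i) holds. The substantial range is $r,\ell\ge 3$, where $\CA(G(r,r,\ell))\notin\CIF$. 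Here I would invoke the description of $L(\CA(G(r,r,\ell)))$ in \cite[\S 2]{orliksolomon:unitaryreflectiongroups}: for $X$ with $\dim X=p$ the irreducible component of $\CA^X$ is one of the intermediate arrangements $\CA^k_p(r)$, $0\le k\le p$, and each $\CA^k_p(r)$ occurs; so everything comes down to showing $\CA^k_p(r)\in\CIF$ exactly when $p-2\le k\le p$. For $k=p$ this is $\CA(G(r,1,p))$, supersolvable. For $k=p-1$ and $k=p-2$ I would write down an explicit inductive chain, deleting hyperplanes through the last coordinate one at a time, so that each intermediate member is again of type $\CA^{k'}_{p'}(r)$ with matching exponents and the Addition--Deletion Theorem applies at every step. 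For $k\le p-3$ I would compute $\exp(\CA^k_p(r))$ --- a nearly arithmetic multiset coming from the product structure in the first $p-k$ coordinates --- and check that no hyperplane $H$ satisfies $|\CA^k_p(r)|-|(\CA^k_p(r))^H|\in\exp(\CA^k_p(r))$; the necessary condition then forces $\CA^k_p(r)\notin\CIF$. This gives clause~(ii).

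\textbf{The exceptional groups.} By the classification of inductively free reflection arrangements, $\CA(W)\in\CIF$ for every exceptional $W$ except $W\in\{G_{24},G_{27},G_{29},G_{31},G_{33},G_{34}\}$. For the remaining exceptional $W$, clause~(i) reduces us to checking each restriction: for the Coxeter groups $H_3,H_4,F_4,E_6,E_7,E_8$ the restriction types in \cite[App.~C]{orlikterao:arrangements} are products of Coxeter arrangements, hence in $\CIF$; for $G_{25},G_{26}$ all proper restrictions have rank $\le 2$; and for $G_{32}$ the rank-$3$ restrictions are verified directly. For the six groups $W$ with $\CA(W)\notin\CIF$ there is nothing to do when $\dim X\le 2$; for $\dim X=3$ I would exhibit, for each restriction type occurring, an inductive chain (a short finite check); and for $\dim X\ge 4$ --- which arises only for $G_{33}$ ($\dim X=4$) and $G_{34}$ ($\dim X\in\{4,5\}$) --- I must show $\CA^X\notin\CIF$, going through the finitely many orbits of such $X$, reading off $\CA^X$ from \cite[App.~C]{orlikterao:arrangements}, and certifying non-membership either because $\CA^X$ is itself one of the non-$\CIF$ reflection arrangements on the list (for instance $\CA(G_{34})^H\cong\CA(G_{33})$ for a suitable hyperplane $H$) or, in the remaining cases, via the exponent obstruction of the first paragraph.

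\textbf{The main obstacle.} The ``if'' direction is essentially bookkeeping once the right inductive chains have been located. The hard part is the converse --- showing that the excluded arrangements are genuinely \emph{not} inductively free --- because one must preclude \emph{every} possible inductive chain rather than produce one. I expect this to rest entirely on the exponent obstruction: argued uniformly for the families $\CA^k_p(r)$ with $k\le p-3$, where the exponents are regular enough, and reduced to a finite computer calculation for the exceptional restrictions of dimension $\ge 4$, comparing for each relevant $X$ the multiset $\{\,|(\CA^X)^H|:H\in\CA^X\,\}$ with $\exp(\CA^X)$.
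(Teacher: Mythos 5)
First, a caveat on the comparison: this paper does not prove Theorem \ref{thm:indfree} at all --- it is quoted verbatim from \cite{amendhogeroehrle:indfree} --- so there is no in-paper proof to measure you against; I can only judge your outline on its own terms, and there it has concrete gaps. The central one is that your ``exponent obstruction'' does not do what you need for the family. For $\CA=\CA^k_\ell(r)$ with $r\ge 3$ and $1\le k\le \ell-3$ one has $|\CA|=k+r\binom{\ell}{2}$ and $\exp\CA=\{1,r+1,\ldots,(\ell-2)r+1,(\ell-1)r-\ell+k+1\}$, while restriction to a coordinate hyperplane $H_i$ ($i\le k$) has type $\CA^{\ell-1}_{\ell-1}(r)$, so that $|\CA|-|\CA^{H_i}|=(\ell-1)r-\ell+k+1\in\exp\CA$; moreover $\CA^{H_i}$ is supersolvable, hence in $\CIF$, with exactly the complementary exponents. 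So the check you propose (``no hyperplane satisfies $|\CB|-|\CB^H|\in\exp\CB$'') comes out \emph{false} for every $k\ge 1$, and your necessary condition is satisfied rather than violated. The pure counting argument only rules out the case $k=0$, i.e.\ $\CA(G(r,r,\ell))$ itself, where there are no coordinate hyperplanes. To exclude $1\le k\le \ell-3$ you must additionally show that the deletion along $H_i$, namely $\CA^{k-1}_\ell(r)$, is not inductively free, i.e.\ run a downward induction on $k$ terminating in the reflection arrangement; your plan as written stalls exactly here.

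The affirmative side has matching problems. Your proposed chain for $k=p-1,\,p-2$, ``deleting hyperplanes through the last coordinate one at a time so that each intermediate member is again of type $\CA^{k'}_{p'}(r)$,'' cannot exist: deleting a coordinate hyperplane from $\CA^{p-2}_p(r)$ yields $\CA^{p-3}_p(r)$, which is precisely one of the non-inductively-free arrangements (for $r\ge3$), while deleting any $H_{ij}(\zeta)$ leaves the family of intermediate arrangements altogether; any genuine induction has to pass through a larger class of auxiliary arrangements, and that is where the real work lies --- both for $k=p-2$ with $r\ge3$ and for \emph{all} $k$ when $r=2$, which your treatment of clause (i) silently assumes. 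Relatedly, your claim that restrictions of the inductively free Coxeter/exceptional arrangements ``are products of Coxeter arrangements'' is false: restrictions of $D_\ell$, $E_6$, $E_7$, $E_8$ include non-reflection arrangements such as $\CA^k_p(2)$ with $1\le k\le p-1$, $(E_6,A_3)\cong\CA^2_3(2)$ and $(E_7,A_2^2)$ (cf.\ Lemma \ref{lem:isoms}), so the ``if'' direction of clause (i) --- that every restriction of an inductively free reflection arrangement is again inductively free, which is not a formal consequence of inductive freeness --- is a substantive part of the theorem that your outline does not actually establish.
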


Because the pointwise stabilizer $W_X$ of $X$ in $L(\CA)$ is itself a 
complex reflection group, by \cite[Thm.~1.5]{steinberg:differential}, 
following \cite[\S 3, App.]{orliksolomon:unitaryreflectiongroups} (cf. \cite[\S 6.4, App.\ C]{orlikterao:arrangements}),
we may  label the $W$-orbit 
of $X \in L(\CA)$ by the type $T$ say,
of $W_X$; we thus frequently denote such a restriction $\CA(W)^X$  
simply by the pair $(W,T)$.

Thanks to \cite[Prop.\ 2.5]{hogeroehrle:super}
and the factorization property of restrictions
\eqref{eq:restrproduct} below, 
the question of supersolvability of  $\CA^X$ reduces 
to the case when $\CA$ is irreducible.
Thus,  we may assume that  $W$ is irreducible.
In light of Theorem \ref{thm:indfree}, 
we can now state the main result of our paper.

\begin{theorem}
\label{super-restriction}
Let $W$ be a finite, irreducible, complex 
reflection group with reflection arrangement 
$\CA = \CA(W)$ and let $X \in L(\CA)$ with $\dim X \ge 3$.
Then the restricted arrangement 
$\CA^X$ is supersolvable
if and only if 
one of the following holds: 
\begin{itemize}
\item[(i)] $\CA$ is supersolvable;
\item[(ii)] $W = G(r,r,\ell)$ and 
$\CA^X \cong \CA_p^p(r)$ or $\CA_p^{p-1}(r)$,
where $p = \dim X$; 
\item[(iii)] $\CA^X$ is $(E_6,A_3)$, $(E_7, D_4)$, $(E_7, A_2^2)$,
or $(E_8, A_5)$.
\end{itemize}
\end{theorem}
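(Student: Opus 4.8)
The plan is to leverage Theorem \ref{thm:indfree}, which already cuts the problem down to a manageable list: since every supersolvable arrangement is inductively free (Theorem \ref{thm:superfree}), any supersolvable restriction $\CA^X$ with $\dim X \ge 3$ must appear in one of the three cases (i)--(iii) of Theorem \ref{thm:indfree}. So the strategy is to run through that list and decide, in each case, whether supersolvability actually holds.

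First I would dispose of case (i) of Theorem \ref{thm:indfree}: if $\CA = \CA(W)$ is itself supersolvable, then by Corollary \ref{cor:stanley} (Stanley's result) every restriction $\CA^X$ is supersolvable, so these all land in case (i) of the present theorem. It remains to understand which restrictions coming from a reflection arrangement $\CA(W)$ that is \emph{not} supersolvable can nonetheless be supersolvable. By Theorem \ref{super}, the non-supersolvable irreducible $W$ of rank $\ge 3$ are: the full monomial groups $G(r,p,\ell)$ with $1 \le p < r$ and $p\ne ?$ — more precisely those not covered by Theorem \ref{super} — together with the exceptional groups $D_\ell$ ($\ell\ge 4$), $E_6, E_7, E_8, F_4, H_3, H_4$, and the remaining $G_i$. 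Intersecting with the list in Theorem \ref{thm:indfree}, the candidates that are \emph{not} already handled by case (i) are exactly: the intermediate arrangements $\CA^k_p(r)$ with $p-2 \le k \le p$ coming from $W = G(r,r,\ell)$ (noting $\CA(G(r,r,\ell))$ is supersolvable only for $\ell \le 2$, or degenerate small cases), and the restrictions $(W,T)$ with $W \in \{G_{24},G_{27},G_{29},G_{31},G_{33},G_{34}\}$ and $\dim X = 3$.

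For the monomial family, the key structural input is the explicit description of the intermediate arrangements $\CA^k_\ell(r)$ from \cite{orliksolomon:unitaryreflectiongroups}. I would show directly that $\CA^\ell_\ell(r) = \CA(G(r,1,\ell))$ is supersolvable (it is, being a reflection arrangement covered by Theorem \ref{super} when $r,\ell \ge 3$, and trivially in small cases) and that $\CA^{\ell-1}_\ell(r)$ is supersolvable by exhibiting an explicit modular chain in $L(\CA^{\ell-1}_\ell(r))$ — typically by induction on $\ell$, peeling off a modular hyperplane coordinate. Conversely, for $k \le \ell-2$ I would produce a rank-$3$ localization of $\CA^k_\ell(r)$ that fails to be supersolvable, e.g. by showing it contains a subarrangement isomorphic to a non-supersolvable rank-$3$ arrangement (the obstruction here is essentially the same as the one identified in \cite{hogeroehrle:super} for $G(r,r,\ell)$ with $\ell \ge 3$). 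This reduces everything to rank $3$, where supersolvability of a central essential arrangement is equivalent to the existence of a modular point (a line through which every other hyperplane's intersection passes, in the projective picture), which is a finite check on the combinatorics.

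For the six exceptional groups, since $\dim X = 3$, each restriction $\CA^X$ is a rank-$3$ arrangement, and I would simply compute its intersection lattice (or read it off from the tables in \cite[App.\ C]{orlikterao:arrangements} / \cite{orliksolomon:unitaryreflectiongroups}) and test for a modular point of dimension $1$. This is a finite, purely computational verification — using \GAP, \CHEVIE, or \Sage as needed — and it will single out exactly the four restrictions $(E_6,A_3)$, $(E_7,D_4)$, $(E_7,A_2^2)$, $(E_8,A_5)$ listed in case (iii), all other rank-$3$ exceptional restrictions failing to have a modular line. (Note $G_{24}$ has rank $3$, so its only relevant $X$ with $\dim X = 3$ is $X = V$, giving $\CA$ itself, already excluded; the genuinely new instances all come from Weyl groups, consistent with item (iii).) Finally, the reduction to the irreducible case — allowing us to assume $W$ irreducible throughout — is provided by \cite[Prop.\ 2.5]{hogeroehrle:super} together with the restriction-factorization formula \eqref{eq:restrproduct}, exactly as in the statement's preamble.

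I expect the main obstacle to be the monomial family: unlike the exceptional cases, which are a bounded finite check, here one must handle the three-parameter family $\CA^k_\ell(r)$ uniformly. The delicate point is proving the \emph{converse} direction for $k \le p-2$ — that is, ruling out supersolvability for all such intermediate restrictions at once — which requires a clean combinatorial certificate of non-supersolvability (a non-modular rank-$3$ sublattice) that works for every $r$ and every $\ell \ge k+2$, rather than a case-by-case argument. Establishing the supersolvability of $\CA^{p-1}_p(r)$ by an inductive modular chain is more routine but still needs care in identifying the correct modular hyperplane at each stage.
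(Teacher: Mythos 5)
There is a genuine gap in your reduction via Theorem \ref{thm:indfree}. Case (i) of that theorem reads ``$\CA$ is inductively free'', not ``$\CA$ is supersolvable'', and inductive freeness is strictly weaker: the reflection arrangements of $D_\ell$ ($\ell\ge 4$), $E_6$, $E_7$, $E_8$, $F_4$, $H_3$, $H_4$, $G_{25}$, $G_{26}$, $G_{32}$, etc.\ are inductively free but not supersolvable. For such $W$ every restriction $\CA^X$ falls under case (i) of Theorem \ref{thm:indfree}, so the inductively free classification imposes no constraint whatsoever, and you cannot ``dispose of'' these groups after the Stanley step. Your candidate list (the intermediate arrangements $\CA^k_p(r)$ with $p-2\le k\le p$, plus restrictions of $G_{24}, G_{27}, G_{29}, G_{31}, G_{33}, G_{34}$ with $\dim X=3$) therefore omits exactly the restrictions of the inductively-free-but-not-supersolvable exceptional arrangements --- and these are where all four cases of part (iii), namely $(E_6,A_3)$, $(E_7,D_4)$, $(E_7,A_2^2)$, $(E_8,A_5)$, actually live; none of them is a restriction of one of the six groups on your list (in fact the paper's table of $3$-dimensional restrictions shows that no restriction of $G_{24},\dots,G_{34}$ of dimension $3$ is supersolvable). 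Your closing claim that the finite check over those six groups ``will single out exactly'' the four restrictions in (iii) is thus internally inconsistent, and as written your procedure would never find them.

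What is missing is precisely the bulk of the paper's work on the exceptional side: one must test every $3$-dimensional restriction of every non-supersolvable irreducible exceptional $W$ (including $F_4$, $H_4$, $E_6$, $E_7$, $E_8$, $G_{29}$, $G_{31}$, $G_{32}$, $G_{33}$, $G_{34}$) --- the paper does this with the numerical exponent criterion of Lemma \ref{lem:dim3notsuper} together with the lattice isomorphisms of Lemma \ref{lem:isoms} and an explicit modular line for $(E_7,A_2^2)$ --- and one must then rule out all restrictions of these groups with $\dim X\ge 4$ (Lemma \ref{lem:dim4s} and Corollary \ref{cor:dimge4}, via Corollary \ref{cor:stanley}), a step your plan omits entirely for the groups it dropped. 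By contrast, your handling of the monomial family is essentially sound and close to the paper's Lemma \ref{lem:intermediates}: using Theorem \ref{thm:indfree}(ii) to eliminate $k\le p-3$ is legitimate (the paper argues the same way), supersolvability of $\CA^p_p(r)$ and $\CA^{p-1}_p(r)$ goes through as you describe, and non-supersolvability of $\CA^{p-2}_p(r)$ reduces by restriction and Corollary \ref{cor:stanley} to the single case $\CA^1_3(r)$, which is handled by Lemma \ref{lem:dim3notsuper} with $\exp\CA=\{1,r+1,2r-1\}$; only the degenerate case $\CA^0_3(2)\cong A_3$ needs separate mention.
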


While Theorem \ref{super-restriction} gives a complete classification 
of supersolvable restrictions 
of reflection arrangements, 
as far as isomorphism types
of such restrictions are concerned,
there is quite a bit of redundancy in its statement.
For, the last two restrictions in part (iii) of Theorem \ref{super-restriction}
are isomorphic to each other while the 
first two restrictions 
are isomorphic to ones in part (ii) 
(see Lemma \ref{lem:isoms} below)
and $\CA_p^p(r)$ is 
isomorphic to the reflection arrangment of $G(r,1,p)$,
already covered in part (i). 
Thus, 
apart from supersolvable reflection arrangements themselves,
there is only one additional family of 
supersolvable restrictions one for each dimension,
and a single exceptional case, 
up to isomorphism. 

\begin{corollary}
\label{cor:super-restriction}
Let $W$ be a finite, irreducible, complex 
reflection group with reflection arrangement 
$\CA = \CA(W)$ and let $X \in L(\CA)$ with $\dim X \ge 3$.
Then $\CA^X$ is supersolvable
if and only if  either
$\CA^X$ is isomorphic to a supersolvable reflection arrangment, 
$\CA^X \cong \CA_p^{p-1}(r)$ for some $p \ge 3$,
or $\CA^X \cong (E_7, A_2^2)$.
\end{corollary}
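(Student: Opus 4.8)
The plan is to read off Corollary~\ref{cor:super-restriction} from Theorem~\ref{super-restriction} by collapsing the list given there to isomorphism classes. Since supersolvability of an arrangement depends only on its intersection lattice (Definition~\ref{def:super}), it is an isomorphism invariant, so it suffices to replace each entry in the list of Theorem~\ref{super-restriction} by a convenient representative and to verify that precisely the three families named in the corollary appear.

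First I would handle case~(i) of Theorem~\ref{super-restriction}. As $W$ is irreducible and $\dim X\ge 3$ forces $\CA$ to have rank at least $3$, Theorem~\ref{super} shows that a supersolvable $\CA$ is, as an arrangement, $\CA(G(r,1,n))$ for some $r\ge 1$ (the braid arrangement of type $A_{n-1}$ arising for $r=1$, type $B_n$ for $r=2$, and $\CA(G(r,p,n))=\CA(G(r,1,n))$ for $p\neq r$). A short computation then identifies all restrictions of such an arrangement: up to the action of $W$ a flat of $\CA(G(r,1,n))$ is obtained by setting some coordinates to $0$ and partitioning the rest into $b=\dim X$ blocks linked by $r$th roots of unity, and restricting the defining forms one sees that the hyperplanes surviving on the flat are exactly all coordinate hyperplanes $u_i=0$ together with all $u_i=\xi u_j$ with $\xi^r=1$; hence $\CA^X\cong\CA(G(r,1,b))$. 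In particular $\CA^X$ is again a supersolvable reflection arrangement by Theorem~\ref{super}, so in case~(i) the conclusion of the corollary holds.

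For cases~(ii) and~(iii) I would first record the remaining isomorphisms. In case~(ii), $\CA_p^p(r)=\CA(G(r,1,p))$ is a reflection arrangement, supersolvable by Theorem~\ref{super}, so every arrangement there is isomorphic either to a supersolvable reflection arrangement or to $\CA_p^{p-1}(r)$ with $p=\dim X\ge 3$. In case~(iii) I would establish (this is Lemma~\ref{lem:isoms}) that $(E_6,A_3)$ and $(E_7,D_4)$ are isomorphic to arrangements occurring in case~(ii)---hence to a supersolvable reflection arrangement or to some $\CA_3^2(r)$---and that $(E_8,A_5)\cong(E_7,A_2^2)$. Putting the three cases together gives that a supersolvable $\CA^X$ is isomorphic to a supersolvable reflection arrangement, to some $\CA_p^{p-1}(r)$ with $p\ge 3$, or to $(E_7,A_2^2)$; conversely each of these is supersolvable, the reflection arrangements by Theorem~\ref{super} and the other two because they occur (for suitable $W$) in parts (ii) and (iii) of Theorem~\ref{super-restriction}. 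The only genuinely nontrivial ingredient, and the main obstacle, is Lemma~\ref{lem:isoms}: checking the handful of exceptional isomorphisms requires comparing the combinatorial data (intersection lattices, or characteristic polynomials together with the known structure of the restrictions) of $(E_6,A_3)$, $(E_7,D_4)$, $(E_7,A_2^2)$, $(E_8,A_5)$ with the explicit arrangements $\CA_p^k(r)$; this is where the actual work lies.
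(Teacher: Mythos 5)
Your proposal is correct and follows essentially the same route as the paper: the corollary is obtained from Theorem \ref{super-restriction} by collapsing the list up to lattice isomorphism, using Lemma \ref{lem:isoms} (which the paper simply cites from Orlik--Terao, App.\ D, rather than reverifying) together with the identifications $\CA_p^p(r)=\CA(G(r,1,p))$ and the standard fact that restrictions of the supersolvable reflection arrangements in case (i) are again (isomorphic to) such reflection arrangements. Your explicit computation of the restrictions of $\CA(G(r,1,n))$ just makes precise what the paper leaves implicit, so there is no substantive difference.
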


The definition of 
supersolvability of $\CA$ entails the existence of 
modular elements in $L(\CA)$ of any possible rank;
see \S \ref{ssect:supersolve} for the notion of 
modular elements.
In our second main result we  show that 
irreducible,
supersolvable restrictions of 
reflection arrangements 
are characterized merely by the presence
of a modular element of  dimension $1$.

\begin{theorem}
\label{super-restriction-rank2}
For $W$ a finite, irreducible
complex reflection group of rank at least $4$,  
let $\CA = \CA(W)$.
Let $X \in L(\CA)$ with $\dim X \ge 3$.
Then $\CA^X$
is supersolvable if and only if 
there exists a modular element of dimension $1$
in its lattice $L(\CA^X)$.
\end{theorem}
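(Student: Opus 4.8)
The plan is as follows. The forward implication is immediate: if $\CA^X$ is supersolvable, its lattice carries a maximal chain of modular elements, and since $W$ is irreducible $\CA(W)$ is essential, hence so is the restriction $\CA^X$; the chain then meets the level of codimension $\dim X-1$, producing a modular element of dimension $1$. The content is the converse, and here I would lean on two facts. First, the criterion of Stanley that a flat of $L(\CA^X)$ is modular exactly when its subspace sum with every other flat is again a flat of $L(\CA^X)$. Second, the familiar feature of supersolvable geometric lattices, a companion to \cite[Prop.~3.2]{stanley:super} (cf.\ Corollary \ref{cor:stanley}): such a lattice is supersolvable if and only if it admits a modular coatom $M$ for which the interval below $M$ is again supersolvable; for an arrangement that interval is the intersection lattice of the localization $(\CA^X)_M$, whose essentialization is a restriction of the reflection arrangement of the parabolic subgroup $W_M$ and has rank $\dim X-1$. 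Since $\CA^X$ is essential, a modular element of dimension $1$ \emph{is} a modular coatom.

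These already settle the case $\dim X=3$: there $(\CA^X)_M$ has rank $2$, a rank-$2$ arrangement is always supersolvable, and so the presence of a modular line forces $\CA^X$ to be supersolvable; the converse is the forward implication. For $\dim X\ge 4$ I would argue by contraposition from Theorem \ref{super-restriction}: one must show that every restriction $\CA^X$ with $\dim X\ge 4$ not listed there admits no modular line. Combining Theorem \ref{super-restriction} with Theorem \ref{thm:indfree} and the product decomposition \cite[Prop.~2.5]{hogeroehrle:super}, the non-supersolvable restrictions $\CA^X$ with $\dim X\ge 4$ reduce, up to products with supersolvable reflection arrangements, to two shapes: the intermediate arrangements $\CA_p^{k}(r)$ with $r\ge 2$, $p=\dim X\ge 4$ and $0\le k\le p-2$ (this family absorbs the non-supersolvable restrictions of all the arrangements $\CA(G(r,r,\ell))$, among them those of type $D_\ell$, and contains $\CA(G(r,r,p))$ itself as the case $k=0$); and the restrictions of the reflection arrangements of the finitely many groups $F_4$, $H_4$, $E_6$, $E_7$, $E_8$, $G_{29}$, $G_{31}$, $G_{32}$, $G_{33}$, $G_{34}$.

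For the second, finite list the plan is a direct, computer-assisted inspection of the (finite) intersection lattices, drawing on the combinatorial data for restrictions of reflection arrangements in \cite{orliksolomon:unitaryreflectiongroups} and \cite[App.\ C]{orlikterao:arrangements}: for each flat $Y$ of dimension $1$ one exhibits a flat $Z$ with $Y+Z\notin L(\CA^X)$, so $Y$ is not modular. For the family $\CA_p^{k}(r)$ with $k\le p-2$ I would argue uniformly, using the explicit model of the intermediate arrangements and of their intersection lattices in \cite[\S 2]{orliksolomon:unitaryreflectiongroups} (cf.\ \cite[\S 6.4]{orlikterao:arrangements}) to produce, for every dimension-$1$ flat $Y$, a flat $Z$ whose sum with $Y$ is not a flat whenever $k\le p-2$. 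Together with the forward implication and Theorem \ref{super-restriction}, this yields the asserted equivalence.

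The principal obstacle is precisely this last uniform step for the infinite family $\CA_p^{k}(r)$, $k\le p-2$: the exceptional groups reduce to a bounded computation, but for the family one needs the structural fact that the intermediate arrangement $\CA_\ell^{k}(r)$ admits the strictly linear fibration -- equivalently, carries a modular line -- precisely for $k\in\{\ell-1,\ell\}$, which is exactly the supersolvable range of Theorem \ref{super-restriction}(ii); this is also what underlies Corollary \ref{cor:linearlyfiberd}. A further, more routine point is the careful reduction, via products and Theorem \ref{thm:indfree}, of an arbitrary non-supersolvable restriction to one of the two shapes above.
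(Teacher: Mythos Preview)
Your plan is correct and follows the same route as the paper: the forward direction is immediate, and the reverse reduces via Theorem \ref{super-restriction} to checking that every non-supersolvable restriction $\CA^X$ with $\dim X\ge 3$ has no modular line. The paper's execution differs from your brute-force proposal in two places.

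For the infinite family the paper proves the stronger structural result (Proposition \ref{prop:boolsub}) that the non-trivial modular elements of $L(\CA_\ell^k(r))$ are exactly those of the Boolean sublattice $L(\CB_\ell^k)$ generated by the $k$ coordinate hyperplanes; hence a modular line exists precisely when $k\ge \ell-1$. The proof is by induction on $\ell$, driven by Lemma \ref{lem:modularrestrictions}: if $X$ is modular in $L(\CA)$ then $X\cap Z$ is modular in $L(\CA^Z)$ for every $Z$. So rather than exhibiting, for each candidate line $Y$, an explicit $Z$ with $Y+Z\notin L$, one restricts to a well-chosen hyperplane $H$ and invokes the inductive hypothesis on $\CA^H\cong\CA_{\ell-1}^{k'}(r)$. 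This resolves cleanly what you flag as the principal obstacle.

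The same inheritance lemma streamlines the exceptional types: it reduces arbitrary $\dim X\ge 4$ to $\dim X=4$ (a modular line in $L(\CA^X)$ would restrict to one in $L(\CA^{X\cap H})$ for every $H\in\CA^X$), and at $\dim X=4$ a counting criterion (Lemma \ref{lem:show4nonmodular1}) dispatches all but two restrictions, $(E_6,A_2)$ and $(E_8,A_4)$, which alone require a direct computer check --- far less than the lattice-by-lattice inspection you propose. Two side remarks: the appeal to Theorem \ref{thm:indfree} and to product decompositions is not needed and does not appear in the paper's argument; and the localization $(\CA^X)_M$ together with the parabolic $W_M$ plays no role beyond the $\dim X=3$ base case, since the paper recurses on the ambient rank via Lemma \ref{lem:modularrestrictions} rather than on $W_M$.
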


\begin{remark}
\label{rem:super-restriction-rank2}
(i).
Note that 
Theorem \ref{super-restriction-rank2}
also applies to the case $X = V$ and thus gives
a new characterization of 
supersolvable  
reflection arrangements, 
cf.~\cite[Thm.\ 1.3]{hogeroehrle:super}.

(ii).
The condition of irreducibility in  
Theorem \ref{super-restriction-rank2} 
is necessary, see \cite[Rem.\ 2.6]{hogeroehrle:super}.
\end{remark}

In view of \cite[Cor.~5.112, Thm.~5.113]{orlikterao:arrangements},
Theorem \ref{super-restriction-rank2} 
readily implies the following.

\begin{corollary}
\label{cor:linearlyfiberd}
For $W$ a finite, irreducible complex reflection group let $\CA = \CA(W)$ and let $X \in L(\CA)$.
Then $\CA^X$ is strictly linearly fibered  
if and only if it is of fiber type.
\end{corollary}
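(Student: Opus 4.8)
The plan is to deduce Corollary \ref{cor:linearlyfiberd} directly from Theorem \ref{super-restriction-rank2} together with the cited results \cite[Cor.~5.112, Thm.~5.113]{orlikterao:arrangements}. Recall that an arrangement is \emph{strictly linearly fibered} if it admits a modular coatom, i.e.\ a modular element of dimension $1$ in its intersection lattice, and that it is \emph{of fiber type} precisely when it is supersolvable; these are the contents of \cite[Cor.~5.112, Thm.~5.113]{orlikterao:arrangements}. Thus the statement to be proved is essentially a translation of Theorem \ref{super-restriction-rank2} into the language of fibered arrangements, once the various reductions have been carried out.

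First I would dispose of the reducible case. By \cite[Prop.\ 2.5]{hogeroehrle:super} and the product decomposition \eqref{eq:restrproduct} of restrictions, if $W = W_1 \times \cdots \times W_k$ is the decomposition into irreducible factors, then $\CA^X$ decomposes as a product of restrictions of the irreducible reflection arrangements $\CA(W_i)$; moreover a product arrangement is supersolvable if and only if each factor is, and likewise it is strictly linearly fibered if and only if each factor is (being of fiber type is a product-stable property, as is supersolvability). Hence it suffices to treat the case where $W$ is irreducible, which is what Theorem \ref{super-restriction-rank2} and Theorem \ref{super-restriction} address. Next I would handle the small-rank cases separately: if $\operatorname{rank} W \le 2$, or more generally if $\dim X \le 2$, then $\CA^X$ is a central arrangement of rank at most $2$, which is automatically supersolvable (cf.\ \cite[Rem.\ 2.3]{hogeroehrle:super}) and also trivially strictly linearly fibered, so the equivalence is immediate. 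The remaining range is $W$ irreducible of rank at least $3$ and $\dim X \ge 3$ when $\dim X < \operatorname{rank} W$, together with the boundary case $X = V$.

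For $W$ irreducible of rank at least $4$ and $\dim X \ge 3$, Theorem \ref{super-restriction-rank2} states exactly that $\CA^X$ is supersolvable if and only if $L(\CA^X)$ contains a modular element of dimension $1$; by \cite[Cor.~5.112, Thm.~5.113]{orlikterao:arrangements} the left-hand side means $\CA^X$ is of fiber type and the right-hand side means $\CA^X$ is strictly linearly fibered, giving the corollary in this range. The case $X = V$ with $W$ of rank at least $4$ is covered by Remark \ref{rem:super-restriction-rank2}(i), which observes that Theorem \ref{super-restriction-rank2} applies verbatim. It then remains to dispatch the low-rank cases $\operatorname{rank} W = 3$, where one cannot invoke Theorem \ref{super-restriction-rank2} directly: here $X = V$ or $\dim X \le 2$. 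The case $\dim X \le 2$ was already settled above, and for $X = V$ with $\operatorname{rank} W = 3$ one checks the (finite) list of rank-$3$ irreducible complex reflection groups by hand, comparing Theorem \ref{super} with the known modular-coatom data; in each case supersolvability and the existence of a modular element of dimension $1$ coincide, again matching fiber type with strict linear fiberedness via \cite[Cor.~5.112, Thm.~5.113]{orlikterao:arrangements}.

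The main obstacle, such as it is, is not a deep one: it lies entirely in the bookkeeping of reductions — verifying that both properties in question (fiber type and strict linear fiberedness) are compatible with the product decomposition of restrictions and with passage to irreducible factors, and in ensuring that the low-rank cases excluded from the hypothesis of Theorem \ref{super-restriction-rank2} (namely $\operatorname{rank} W \le 3$) are nonetheless covered by the elementary observation that rank-$\le 2$ central arrangements and, where relevant, the rank-$3$ reflection arrangements themselves behave as required. Once these routine compatibilities are in place, the corollary is an immediate consequence of Theorem \ref{super-restriction-rank2} and the dictionary provided by \cite[Cor.~5.112, Thm.~5.113]{orlikterao:arrangements}.
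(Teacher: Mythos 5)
Your proof is correct and takes essentially the paper's route: translate via \cite[Cor.~5.112, Thm.~5.113]{orlikterao:arrangements} (strictly linearly fibered $\Leftrightarrow$ existence of a modular element of dimension $1$; fiber type $\Leftrightarrow$ supersolvable) and apply Theorem \ref{super-restriction-rank2}, with the cases $\dim X \le 2$ and $\dim X = 3$ (in particular rank-$3$ groups with $X = V$, where Lemma \ref{lem:super-rank2} already suffices without any case-by-case check) handled by the elementary facts the paper quotes. One caveat: your opening reduction to irreducible $W$ is superfluous, since irreducibility is part of the hypothesis, and within it the claim that a product arrangement is strictly linearly fibered if and only if \emph{each} factor is, is false -- a modular coatom of $\CA_1 \times \CA_2$ only needs one factor to supply a modular coatom, the other contributing its center -- but as this step is never used, the proof of the stated corollary is unaffected.
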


\begin{remark}
\label{rem:linearlyfiberd}
See \cite[Defs.~5.10, 5.11]{orlikterao:arrangements}
for the notions of \emph{strictly linearly fibered} and 
\emph{fiber type} arrangements. 
Corollary \ref{cor:linearlyfiberd}
is rather striking as the 
latter notion is considerably stronger than the first. 
In particular, it follows that for $W$ a finite, complex reflection group, 
the reflection arrangement $\CA(W)$ is supersolvable if and only if it is 
strictly linearly fibered.
This new fact underlines the very special role played by 
the class of 
reflection arrangements and their restrictions among all arrangements.
\end{remark}

The paper is organized as follows.
In \S \ref{sect:recol} we recall the required notation and  
facts about supersolvability of arrangements
and reflection arrangements from
\cite[\S 4, \S6]{orlikterao:arrangements}
and prove some preliminary results.
Theorems \ref{super-restriction} and \ref{super-restriction-rank2} are proved in \S \ref{sect:proof1} and \S \ref{sect:proof2},
respectively.
The dichotomy of the classification of the irreducible 
complex reflection groups $W$ into the three 
parameter family $G(r,p,\ell)$ and an additional 34 exceptional cases
(see \cite[Table B.1]{orlikterao:arrangements}),
mentioned above, descends to an analogous  
dichotomy of the  restrictions $\CA(W)^X$ into the three-parameter family
$\CA^k_\ell(r)$ and a small finite set of exceptional cases. 
While our proofs for the restrictions that are isomorphic to 
$\CA^k_\ell(r)$ are conceptual and uniform, the statements of 
Theorem \ref{super-restriction} and 
Corollary \ref{cor:super-restriction} indicate that there cannot 
be a uniform argument in general.
For $W$ of exceptional type, 
our arguments consist of a case-by-case analysis based
on some technical general lemmas for $3$- and $4$-arrangements
provided in Section \ref{ssect:supersolve}.
Despite the fact that the statement of 
Theorem \ref{super-restriction-rank2}
is uniform, still two instances required 
some computer calculations.

For general information about arrangements and reflection groups we refer
the reader to \cite{bourbaki:groupes}, 
\cite{orliksolomon:unitaryreflectiongroups} and 
\cite{orlikterao:arrangements}.
Throughout, we use the naming scheme
of the irreducible finite complex reflection groups due to 
Shephard and Todd, \cite{shephardtodd}.

\section{Recollections and Preliminaries}
\label{sect:recol}
\subsection{Hyperplane Arrangements}
\label{ssect:hyper}

Let $V = \BBC^\ell$ 
be an $\ell$-dimensional complex vector space.
A \emph{hyperplane arrangement} is a pair
$(\CA, V)$, where $\CA$ is a finite collection of hyperplanes in $V$.
Usually, we simply write $\CA$ in place of $(\CA, V)$.
We only consider central arrangements, i.e. the origin is contained in the center $T := \bigcap_{H \in \CA}H$ of $\CA$.
The empty arrangement in $V$ is denoted by $\Phi_\ell$.

The \emph{lattice} $L(\CA)$ of $\CA$ is the set of subspaces of $V$ of
the form $H_1\cap \dotsm \cap H_n$ where $\{ H_1, \ldots, H_n \}$ is a subset
of $\CA$. 
For $X \in L(\CA)$, we have two associated arrangements, 
firstly the subarrangement 
$\CA_X :=\{H \in \CA \mid X \subseteq H\}$
of $\CA$ and secondly, 
the \emph{restriction of $\CA$ to $X$}, defined by
$\CA^X := \{ X \cap H \mid H \in \CA \setminus \CA_X\}$.

The lattice $L(\CA)$ is a partially ordered set by reverse inclusion:
$X \le Y$ provided $Y \subseteq X$ for $X,Y \in L(\CA)$.
We have a \emph{rank} function on $L(\CA)$ defined by $r(X) := \codim_V(X)$.
The \emph{rank} $r(\CA)$ of $\CA$ is the rank of a maximal element in $L(\CA)$ with respect
to the partial order.
With this definition $L(\CA)$ is a \emph{ranked geometric lattice}, 
\cite[\S 2]{orlikterao:arrangements}.
The $\ell$-arrangement $\CA$ is called \emph{essential} provided $r(\CA) = \ell$.

Note that the restriction $\CA^X$ is also central,
so that $L(\CA^X)$ is again a geometric lattice.
Let $\CA$ be central and let
$X, Y \in L(\CA)$ with $X < Y$. 
We recall the following sublattices of $L(\CA)$ from 
\cite[Def.\ 2.10]{orlikterao:arrangements},
$L(\CA)_Y :=\{ Z \in L(\CA) \mid Z \le Y\}$, 
$L(\CA)^X :=\{ Z \in L(\CA) \mid X \le Z\}$,
and the \emph{interval}
$[X,Y] := L(\CA)_Y \cap L(\CA)^X = \{ Z \in L(\CA) \mid X \le Z \le Y\}$.

\begin{lemma}
\label{lem:interval}
Let $\CA$ be central and let
$X, Y \in L(\CA)$ with $X < Y$. 
Then $L(\CA^Y)$ is a sublattice in $L(\CA^X)$.
In particular, the former is an interval in the latter.
\end{lemma}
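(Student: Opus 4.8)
The plan is to exhibit an explicit order-preserving identification of $L(\CA^Y)$ with a subset of $L(\CA^X)$ and then observe that this subset is exactly the interval from $Y$ to the maximal element $V$ in $L(\CA^X)$. First I would recall that the lattice of a restriction has a clean description inside the ambient lattice: by \cite[Cor.\ 2.11]{orlikterao:arrangements}, for any $Z \in L(\CA)$ the map $U \mapsto U$ is a poset isomorphism between $L(\CA^Z)$ and the upper interval $L(\CA)^Z = \{U \in L(\CA) \mid Z \le U\}$, since every element of $L(\CA^Z)$ is an intersection $Z \cap H_1 \cap \cdots \cap H_n$ with the $H_i \in \CA \setminus \CA_Z$, hence is an element of $L(\CA)$ lying above $Z$, and conversely every element of $L(\CA)$ above $Z$ arises this way. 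Applying this to $Z = X$ and to $Z = Y$ simultaneously, I identify $L(\CA^X)$ with $L(\CA)^X$ and $L(\CA^Y)$ with $L(\CA)^Y$ as subposets of $L(\CA)$.

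Now since $X < Y$ in $L(\CA)$ (that is, $Y \subseteq X$ as subspaces), transitivity of the partial order gives $L(\CA)^Y \subseteq L(\CA)^X$: any $U$ with $Y \le U$ also satisfies $X \le U$. Thus under the above identifications $L(\CA^Y)$ sits inside $L(\CA^X)$, and the inclusion is an embedding of posets because the partial order on each is the restriction of the partial order on $L(\CA)$. To see it is a sublattice and not merely a subposet, note that meets and joins in $L(\CA)^Z$ agree with those computed in $L(\CA)$ whenever the result still lies above $Z$: the join of $U_1, U_2$ is $U_1 \cap U_2$, which contains $Z$ if both $U_i$ do, and the meet is the intersection of all hyperplanes of $\CA$ containing both $U_i$, which again contains $Z$. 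Hence both $L(\CA)^X$ and $L(\CA)^Y$ are sublattices of $L(\CA)$ with the inherited operations, and the inclusion $L(\CA)^Y \hookrightarrow L(\CA)^X$ preserves them; this proves the first assertion.

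For the ``in particular'' clause, I observe that inside $L(\CA^X) \cong L(\CA)^X$ the maximal element is $X$ itself, playing the role of the ambient space for the restricted arrangement $\CA^X$, while $Y$ is the image of the minimal element of $L(\CA^Y)$. An element $U \in L(\CA)^X$ lies in $L(\CA)^Y$ precisely when $Y \le U$, i.e.\ when $U$ lies in the interval $[Y, X]$ computed in the lattice $L(\CA^X)$ (in the notation $[\,\cdot\,,\cdot\,]$ recalled just before the lemma, with the top element $X$ of $L(\CA^X)$ in place of a general $Y$). Therefore $L(\CA^Y)$ is exactly this interval, which is what was claimed. The only point requiring any care is bookkeeping with the two competing partial orders — reverse inclusion of subspaces versus the abstract lattice order — so I expect the main (and only mild) obstacle to be stating the identification via \cite[Cor.\ 2.11]{orlikterao:arrangements} cleanly enough that the transitivity argument $Y \le U \Rightarrow X \le U$ is transparent.
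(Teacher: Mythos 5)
Your main argument is essentially the paper's: both identify $L(\CA^Z)$ with the upper set $L(\CA)^Z$ inside $L(\CA)$ via \cite[Lem.\ 2.11]{orlikterao:arrangements}, deduce $L(\CA)^Y \subseteq L(\CA)^X$ from $X < Y$ by transitivity of the order, and observe that the lattice operations are inherited, so the inclusion is one of sublattices. That part is correct and matches the paper's proof.

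The slip is in the ``in particular'' clause. With the paper's convention (reverse inclusion), the ambient space $X$ is the \emph{minimal} element of $L(\CA^X)$, not the maximal one; the maximal element is the center $T = \bigcap_{H \in \CA} H$, which lies in $L(\CA)$ precisely because $\CA$ is central, and which is also the center of $\CA^X$. Since $X < Y$, the interval $[Y,X]$ you exhibit is empty, so it cannot equal $L(\CA^Y)$. The correct identification is $L(\CA^Y) = \{U \in L(\CA^X) \mid Y \le U\} = [Y,T]$: this upper set is an interval because the lattice of a central arrangement has the top element $T$, and this is exactly how the paper phrases it ($L(\CA^Y) = L((\CA_T)^Y) = [Y,T]$). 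So centrality is genuinely used at this point, and while the error is only one of orientation in the order and is easily repaired, the final step as written is wrong.
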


\begin{proof}
It follows from  \cite[Lem.\ 2.11]{orlikterao:arrangements} 
that
$L(\CA^Y) = L(\CA)^Y 
\subseteq 
L(\CA)^X = L(\CA^X)$
are both sublattices of $L(\CA)$ and that
$L(\CA^Y) = L((\CA_T)^Y) = [Y, T]$ is an interval in $ L(\CA^X)$.
\end{proof}

Let $\CA = \CA_1 \times \CA_2$ be the 
\emph{product}
of the two arrangements $\CA_1$ and $\CA_2$.
With the partial order defined on 
$L(\CA_1) \times L(\CA_2)$ by
$(X_1,X_2) \le (Y_1,Y_2)$ provided $X_1 \le Y_1$ and $X_2 \le Y_2$, 
there is a lattice isomorphism
$ L(\CA_1) \times L(\CA_2) \cong L(\CA)$ given by
$(X_1, X_2) \mapsto X_1 \oplus X_2$,
\cite[\S 2]{orlikterao:arrangements}.
It is easily seen that
for $X =  X_1 \oplus X_2 \in L(\CA)$, we have 
\begin{equation}
\label{eq:restrproduct}
\CA^X = \CA_1^{X_1} \times \CA_2^{X_2}.
\end{equation}

Note that 
$\CA \times \Phi_0 = \CA$
for any arrangement $\CA$. 
If $\CA$ is of the form $\CA = \CA_1 \times \CA_2$, where 
$\CA_i \ne \Phi_0$ for $i=1,2$, then $\CA$
is called \emph{reducible}, else $\CA$ is 
\emph{irreducible}, 
\cite[Def.\ 2.15]{orlikterao:arrangements}.

\subsection{Free Arrangements}
\label{ssect:free}

Let $S = S(V^*)$ be the symmetric algebra of the dual space $V^*$ of $V$.
If $x_1, \ldots , x_\ell$ is a basis of $V^*$, then we identify $S$ with 
the polynomial ring $\BBC[x_1, \ldots , x_\ell]$.
By denoting the $\BBC$-subspace of $S$
consisting of the homogeneous polynomials of degree $p$ (and $0$) by $S_p$,
we see that there is a natural $\BBZ$-grading
$S = \oplus_{p \in \BBZ}S_p$, where
$S_p = 0$ for $p < 0$.

Let $\Der(S)$ be the $S$-module of $\BBC$-derivations of $S$
and for $i = 1, \ldots, \ell$
define $D_i := \partial/\partial x_i$ to be the $i\th$ partial derivation of $S$.
Now $D_1, \ldots, D_\ell$ is an $S$-basis of $\Der(S)$ and
we call $\theta \in \Der(S)$
\emph{homogeneous of polynomial degree $p$}
provided 
$\theta = \sum_{i=1}^\ell f_i D_i$, 
where $f_i \in S_p$ for each $1 \leq i \leq \ell$.
In this case we write $\pdeg \theta = p$.
By defining $\Der(S)_p$ to be the $\BBC$-subspace of $\Der(S)$ consisting 
of all homogeneous derivations of polynomial degree $p$,
we see that $\Der(S)$ is a graded $S$-module:
$\Der(S) = \oplus_{p\in \BBZ} \Der(S)_p$.

Following \cite[Def.~4.4]{orlikterao:arrangements}, 
we define the $S$-submodule $D(f)$ of $\Der(S)$ for $f \in S$ by
$D(f) := \{\theta \in \Der(S) \mid \theta(f) \in f S\}$.

If $\CA$ is an arrangement in $V$,
then for every $H \in \CA$ we may fix $\alpha_H \in V^*$ with
$H = \ker(\alpha_H)$.
We call $Q(\CA) := \prod_{H \in \CA} \alpha_H \in S$
the \emph{defining polynomial} of $\CA$.

The \emph{module of $\CA$-derivations} is the $S$-submodule of $\Der(S)$ 
defined by 
\[
D(\CA) := D(Q(\CA)).
\]
The arrangement $\CA$ is said to be \emph{free} if the module of $\CA$-derivations
$D(\CA)$ is a free $S$-module.

Note that  $D(\CA)$
is a graded $S$-module $D(\CA) = \oplus_{p\in \BBZ} D(\CA)_p$,
where $D(\CA)_p = D(\CA) \cap \Der(S)_p$, see
\cite[Prop.~4.10]{orlikterao:arrangements}.
If $\CA$ is a free $\ell$-arrangement, 
then by \cite[Prop.~4.18]{orlikterao:arrangements}
the $S$-module $D(\CA)$ admits a basis of $\ell$ homogeneous derivations $\theta_1, \ldots, \theta_\ell$.
While these derivations are not unique, their polynomial 
degrees $\pdeg \theta_i$ are unique (up to ordering).
The set of \emph{exponents} of the free arrangement $\CA$ is the multiset
\[
\exp\CA := \{\pdeg \theta_1, \ldots, \pdeg \theta_\ell\}.
\]
For the stronger notion of  \emph{inductive freeness}, see
\cite[Def.~4.53]{orlikterao:arrangements}.

\subsection{Supersolvable Arrangements}
\label{ssect:supersolve}

Let $\CA$ be an arrangement.
Following \cite[\S 2]{orlikterao:arrangements}, we say
that $X \in L(\CA)$ is \emph{modular}
provided $X + Y \in L(\CA)$ for every $Y \in L(\CA)$.
Let $\CA$ be a central (and essential) $\ell$-arrangement.
The next notion is due to Stanley \cite{stanley:super}. 

\begin{defn}
\label{def:super}
We say that $\CA$ is 
\emph{supersolvable} 
provided there is a maximal chain
\[
V = X_0 < X_1 < \cdots < X_{\ell-1} < X_\ell = T
\]
 of modular elements $X_i$ in $L(\CA)$,
cf.\ \cite[Def.\ 2.32]{orlikterao:arrangements}.
\end{defn}

It is easy to see that 
every $2$-arrangement is supersolvable,
\cite[Rem.\ 2.3]{hogeroehrle:super}.
The next result is also straightforward, 
\cite[Lem.\ 2.4]{hogeroehrle:super}.

\begin{lemma}
\label{lem:super-rank2}
A $3$-arrangement $\CA$ is supersolvable if and only if 
there exists a modular element in $L(\CA)$ of
dimension $1$.
\end{lemma}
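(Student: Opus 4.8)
The plan is to prove Lemma \ref{lem:super-rank2} by a direct argument unwinding Definition \ref{def:super} in the special case $\ell = 3$. Recall that $L(\CA)$ for a central essential $3$-arrangement $\CA$ is a geometric lattice of rank $3$, with bottom element $V$ (dimension $3$, rank $0$), atoms the hyperplanes $H \in \CA$ (dimension $2$, rank $1$), rank-$2$ elements which are the intersections of pairs of hyperplanes (dimension $1$), and top element $T = \bigcap_{H \in \CA} H$ (dimension $0$, rank $3$, since $\CA$ is essential). A maximal chain of modular elements as in Definition \ref{def:super} has the form $V = X_0 < X_1 < X_2 < X_3 = T$, where $X_1$ is a hyperplane, $X_2$ has dimension $1$, and $X_3 = T$.

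First I would establish the easy observations that reduce the claim to a statement about $X_2$ alone. Namely, the bottom element $V$ and the top element $T$ are always modular in any $L(\CA)$: for $V$ this is because $V + Y = Y \in L(\CA)$ trivially, and for $T$ because $T + Y = Y \in L(\CA)$ for all $Y$. So the chain condition really imposes the existence of modular $X_1$ (a hyperplane) and modular $X_2$ (a line). Next I would show that in a central essential $3$-arrangement \emph{every} hyperplane $H \in \CA$ is automatically modular: for any $Y \in L(\CA)$, the sum $H + Y$ is a subspace containing $H$, hence is either $H$ itself (if $Y \subseteq H$) or all of $V$; in both cases $H + Y \in \{H, V\} \subseteq L(\CA)$. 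Thus the only genuine constraint in the chain is that $X_2$, a $1$-dimensional element, be modular.

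Combining these, $\CA$ is supersolvable if and only if there exists a modular $X_2 \in L(\CA)$ with $\dim X_2 = 1$, since given such an $X_2$ we can pick any hyperplane $H \in \CA$ containing $X_2$ (such $H$ exists because $X_2$, having codimension $2$, lies in $L(\CA)$ and hence is an intersection of hyperplanes, each of which contains it) to get the chain $V < H < X_2 < T$ of modular elements, and conversely any supersolvable chain supplies such an $X_2$. The forward direction needs only the extraction of $X_2$ from the chain; the reverse direction needs the construction of the full chain from $X_2$, using modularity of $V$, $T$, and the chosen hyperplane $H$.

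The only mildly subtle point — and hence the step I would flag as the main thing to get right — is the argument that a $1$-dimensional $X_2 \in L(\CA)$ always lies below some hyperplane of $\CA$, and that $V < H < X_2 < T$ is genuinely a maximal chain (i.e. that the inequalities are strict and the ranks jump by exactly one at each step). Strictness $X_2 < T$ follows since $\dim X_2 = 1 > 0 = \dim T$; strictness $H < X_2$ since $\dim H = 2 > 1 = \dim X_2$; and $H \in L(\CA)$ by definition since $X_2$ is an intersection of hyperplanes, at least one of which — indeed, since $X_2$ has codimension $2$, at least two of which — must contain $X_2$. This is entirely elementary, but it is the place where essentiality of $\CA$ (ensuring $\dim T = 0$, so $T$ has rank $3$ and the chain has length $3$) is actually used, so I would state it carefully rather than leave it implicit.
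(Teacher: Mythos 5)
Your proof is correct and is precisely the ``straightforward'' argument the paper has in mind: it gives no proof of Lemma~\ref{lem:super-rank2} itself, citing \cite[Lem.~2.4]{hogeroehrle:super}, and the intended reasoning is exactly yours --- $V$, $T$ and every hyperplane are modular in any central arrangement, so for an essential $3$-arrangement supersolvability reduces to the existence of a modular element of dimension $1$, the chain $V < H < X_2 < T$ being assembled as you describe. Only a cosmetic slip: $V + Y = V$ rather than $Y$, which of course does not affect the conclusion $V + Y \in L(\CA)$.
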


Thanks to \cite[Prop.\ 2.5]{hogeroehrle:super},
supersolvable  arrangements behave well with respect to 
the  product construction for arrangements from \S \ref{ssect:hyper}.

Supersolvable arrangements are always free,
\cite[Thm.~4.58]{orlikterao:arrangements}:

\begin{theorem}
\label{thm:superfree}
Let $\CA$ be a supersolvable $\ell$-arrangment with  
maximal chain 
\[
V = X_0 < X_1 < \cdots < X_{\ell-1} < X_\ell = T
\]
of modular elements $X_i$ in $L(\CA)$. 
Define $b_i = |\CA_{X_i}\setminus \CA_{X_{i-1}}|$ for $1 \le i \le \ell$.
Then $\CA$ is inductively free with 
$\exp \CA = \{b_1, \ldots, b_\ell\}$.
\end{theorem}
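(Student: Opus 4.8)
The statement to prove is Theorem \ref{thm:superfree}: a supersolvable arrangement is inductively free with exponents given by the block sizes $b_i$ along the modular chain.

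The plan is to induct on the rank $\ell$, peeling off the top modular element of the chain. The base cases $\ell = 0, 1$ are trivial ($\Phi_0$ and a single hyperplane). For the inductive step, consider the maximal chain $V = X_0 < X_1 < \cdots < X_\ell = T$ and focus on the modular element $X_{\ell-1}$, which is a hyperplane of $V$ lying in $L(\CA)$, i.e. a member of $\CA$. The key classical fact I would invoke is the \emph{modular coatom} mechanism: if $H = X_{\ell-1} \in \CA$ is modular in $L(\CA)$, then the triple $(\CA, \CA', \CA'')$ with $\CA' = \CA \setminus \{H\}$ and $\CA'' = \CA^H$ is a free triple in the sense of Terao's Addition-Deletion theorem; more precisely one shows $D(\CA) = D(\CA') \cap D(\alpha_H)$ splits and the relevant exact sequence $0 \to D(\CA) \to D(\CA') \to D(\CA'') \to 0$ holds. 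The number of hyperplanes removed in passing from $\CA$ to $\CA''$ along this coatom is exactly $b_\ell = |\CA_{X_\ell} \setminus \CA_{X_{\ell-1}}| = |\CA| - |\CA_H|$, which will become the new exponent.

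The main obstacle — and the heart of the argument — is establishing that the restriction $\CA'' = \CA^{X_{\ell-1}}$ is again supersolvable with modular chain $V'' = X_{\ell-1} \cap X_0 < \cdots < X_{\ell-1} \cap X_{\ell-2} = X_{\ell-1}$ of length $\ell - 1$ inside $L(\CA^{X_{\ell-1}})$, and that the corresponding block sizes are $b_1, \ldots, b_{\ell-1}$, so that the inductive hypothesis applies to $\CA''$. For the supersolvability of the restriction one uses that intersecting a modular chain with a modular element again yields a modular chain — this is exactly the content of Stanley's result \cite[Prop.\ 3.2]{stanley:super} quoted as Corollary \ref{cor:stanley} in the introduction, applied to $X = X_{\ell-1}$. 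The identification of block sizes requires checking that for $1 \le i \le \ell-1$ the map $\CA_{X_i} \setminus \CA_{X_{i-1}} \to (\CA^{X_{\ell-1}})$ sending $K \mapsto K \cap X_{\ell-1}$ is a bijection onto $(\CA^{X_{\ell-1}})_{X_i \cap X_{\ell-1}} \setminus (\CA^{X_{\ell-1}})_{X_{i-1} \cap X_{\ell-1}}$; this is a standard consequence of modularity (two distinct hyperplanes through $X_{i-1}$ but not $X_i$ cannot meet $X_{\ell-1}$ in the same subspace, using that $X_{\ell-1}$ is modular). One must also verify freeness of $\CA'$ is not needed separately — rather, one deduces inductive freeness of $\CA$ from inductive freeness of $\CA''$ together with the addition step: since $\CA^{X_{\ell-1}}$ is inductively free by induction, and $\CA$ is obtained from $\CA' $ by adding $H$ with the right numerics, one applies the inductive-freeness version of Addition-Deletion \cite[Thm.~4.53, Cor.~4.55]{orlikterao:arrangements}, building $\CA$ up one hyperplane at a time through the chain.

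To organize the writeup I would: (1) reduce to $H := X_{\ell-1} \in \CA$ being a modular coatom; (2) record that $\CA'' = \CA^H$ is supersolvable of rank $\ell-1$ with chain obtained by intersecting with $H$, hence inductively free with $\exp\CA'' = \{b_1, \ldots, b_{\ell-1}\}$ by induction; (3) verify $b_i'' = b_i$ for $i \le \ell-1$ via the bijection above; (4) feed $(\CA, \CA', \CA'')$ into inductive Addition–Deletion to conclude $\CA$ is inductively free with $\exp\CA = \exp\CA'' \cup \{|\CA| - |\CA''|\} = \{b_1, \ldots, b_{\ell-1}, b_\ell\}$, since $|\CA| - |\CA''| = |\CA| - |\CA_H| = b_\ell$. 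The genuinely delicate point is (3) together with the claim in (2) that the intersected chain is still \emph{modular}, both of which rest on the modularity of $H$; everything else is bookkeeping.
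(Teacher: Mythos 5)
The paper does not actually prove this statement: it quotes it from \cite[Thm.~4.58]{orlikterao:arrangements} (originally \cite[Thm.~4.2]{jambuterao:free}), so your proposal has to be measured against that standard argument. Your plan fails at its foundation because of a misreading of the chain. With the paper's conventions ($X \le Y$ iff $Y \subseteq X$, rank $=$ codimension), $X_1$ is the unique hyperplane in the chain, while $X_{\ell-1}$ is a modular \emph{coatom} of rank $\ell-1$ (a line, if $\CA$ is essential). So the assertion ``$H = X_{\ell-1} \in \CA$, a hyperplane'' is false for $\ell \ge 3$; the restriction $\CA^{X_{\ell-1}}$ has rank at most $1$, and the entire ``peel off the top modular element by one Addition--Deletion step'' collapses. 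Note also that every hyperplane of any central arrangement is automatically modular (\cite[Ex.~2.28]{orlikterao:arrangements}), so a ``modular hyperplane'' hypothesis carries no force; the whole content of supersolvability sits in the modular elements of higher rank, above all the coatom $X_{\ell-1}$.

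Even if one repairs this by restricting to an actual hyperplane of the chain, a single application of Addition--Deletion cannot close your induction: the recursion defining inductive freeness (\cite[Def.~4.53]{orlikterao:arrangements}) requires \emph{both} $\CA' = \CA\setminus\{H\}$ and $\CA'' = \CA^H$ to be inductively free with $\exp \CA'' \subset \exp \CA'$, whereas you explicitly claim that freeness of $\CA'$ ``is not needed separately.'' It is needed, and $\CA\setminus\{H\}$ for $H$ in the modular chain need not be supersolvable, so your rank induction never reaches it. The genuine proof runs differently: the subarrangement $\CA_{X_{\ell-1}}$ (not a restriction) is supersolvable of rank $\ell-1$ with modular chain $X_0 < \cdots < X_{\ell-1}$ and block sizes $b_1,\ldots,b_{\ell-1}$, hence by induction on rank inductively free with exponents $\{b_1,\ldots,b_{\ell-1},0\}$; one then adds the $b_\ell$ hyperplanes of $\CA\setminus\CA_{X_{\ell-1}}$ one at a time, and modularity of the coatom guarantees that for any two hyperplanes $H,H'$ not containing $X_{\ell-1}$ the intersection $H\cap H'$ already lies in a hyperplane of $\CA_{X_{\ell-1}}$; this controls each restriction and its exponents so that the Addition Theorem applies at every intermediate stage, yielding $\exp\CA = \{b_1,\ldots,b_\ell\}$. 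That hyperplane-by-hyperplane mechanism along the modular coatom is precisely what your single deletion at a (nonexistent) top hyperplane was meant to replace, and it cannot be.
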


Note that it was first proven in \cite[Thm.~4.2]{jambuterao:free} that every supersolvable arrangement is inductively free.
We proceed with some further preliminary results.

\begin{lemma}
\label{lem:modularrestrictions}
Let $X \in L(\CA)$ be modular and $Z \in L(\CA)$. Then $X\cap Z$ is
modular in $L(\CA^Z)$.
\end{lemma}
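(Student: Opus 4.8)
The plan is to deduce the statement directly from the modular law in the lattice of all linear subspaces of $V$. First I would recall the description of $L(\CA^Z)$ as a sub-poset of $L(\CA)$: by \cite[Lem.\ 2.11]{orlikterao:arrangements} (already invoked in the proof of Lemma~\ref{lem:interval}) one has $L(\CA^Z) = L(\CA)^Z = \{Y \in L(\CA) \mid Y \subseteq Z\}$, with the partial order induced from $L(\CA)$; moreover, since every element of $L(\CA^Z)$ is a subspace of $Z$, the subspace sum of two such elements is the same whether computed in $Z$ or in $V$. In particular $X \cap Z$ lies in $L(\CA)$ and is contained in $Z$, so $X \cap Z \in L(\CA^Z)$, and the assertion ``$X \cap Z$ is modular in $L(\CA^Z)$'' unwinds to: $(X \cap Z) + W \in L(\CA^Z)$ for every $W \in L(\CA^Z)$.

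Next I would fix an arbitrary $W \in L(\CA^Z)$, so that $W \in L(\CA)$ and $W \subseteq Z$, and compute $(X \cap Z) + W$. The crucial point is that $W \subseteq Z$, so the modular law for subspaces of $V$ yields
\[
(X \cap Z) + W = (X + W) \cap Z .
\]
Now $X + W \in L(\CA)$ because $X$ is modular in $L(\CA)$ and $W \in L(\CA)$; intersecting with $Z \in L(\CA)$ stays inside $L(\CA)$; and the resulting subspace is contained in $Z$, hence belongs to $L(\CA^Z)$. This establishes modularity of $X \cap Z$ in $L(\CA^Z)$.

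I do not anticipate a genuine obstacle here; the proof is essentially a one-line application of the modular law. The only two points demanding care are (a) correctly identifying $L(\CA^Z)$ with the sub-poset $L(\CA)^Z$ of $L(\CA)$, so that modularity of $X\cap Z$ in $L(\CA^Z)$ is literally a statement about subspace sums landing back in $L(\CA^Z)$, and (b) applying the modular law with the correct containment hypothesis --- it is $W \subseteq Z$, valid because $W \in L(\CA^Z)$, that is needed, not a containment involving $X$.
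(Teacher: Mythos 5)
Your proof is correct and follows essentially the same route as the paper: the paper also identifies $L(\CA^Z)$ with $L(\CA)^Z$, uses modularity of $X$ to get $X+Y \in L(\CA)$, and then verifies the identity $(Z \cap X) + Y = Z \cap (X+Y)$ (your application of the modular law with $Y \subseteq Z$) to conclude membership in $L(\CA^Z)$. No discrepancies to report.
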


\begin{proof}
Let $Y \in L(\CA^Z) = L(\CA)^Z \subseteq L(\CA)$. 
Since $X$ is modular in $L(\CA)$, we 
have $X+Y \in L(\CA)$ and a direct calculation shows 
$(Z \cap X) + Y = Z \cap (X + Y)\in L(\CA^Z)$. 
\end{proof}

The following 
immediate consequence of 
Lemma \ref{lem:modularrestrictions}
is due to 
Stanley, \cite[Prop.\ 3.2]{stanley:super}.

\begin{corollary}
\label{cor:stanley}
Let $X < Y$ in $L(\CA)$.
If $\CA$ is supersolvable, then so is the
interval $[X,Y]$. 
\end{corollary}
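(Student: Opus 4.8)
\textbf{Proof strategy for Corollary \ref{cor:stanley}.}
The plan is to deduce the statement directly from Lemma \ref{lem:modularrestrictions} together with Definition \ref{def:super}, without any further computation. Recall from \cite[Lem.\ 2.11]{orlikterao:arrangements} (as used in the proof of Lemma \ref{lem:interval}) that the interval $[X,Y]$ in $L(\CA)$ is canonically identified with the lattice $L(\CA^Y)$, and that $[X,Y] = L(\CA)^Y$ sits inside $L(\CA^X)$. So what must be produced is a maximal chain of modular elements in $[X,Y]$, i.e. in $L(\CA^Y)$.

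First I would take a maximal chain
\[
V = X_0 < X_1 < \cdots < X_{\ell-1} < X_\ell = T
\]
of modular elements of $L(\CA)$, whose existence is guaranteed by supersolvability of $\CA$. The natural move is to intersect this chain with $Y$: set $Y_i := X_i \cap Y$. By Lemma \ref{lem:modularrestrictions} applied with $Z = Y$, each $Y_i = X_i \cap Y$ is modular in $L(\CA^Y)$. Moreover $Y_0 = V \cap Y = Y$ is the top element of $L(\CA^Y)$ (viewing $L(\CA^Y)$ as an interval starting at $Y$), and $Y_\ell = T \cap Y = T$ is the bottom element, since $T \subseteq Y$ is forced by $Y \in L(\CA)$ only if $X \le Y \le \dots$ — more precisely $T$ is contained in every element of $L(\CA)$, so $T \cap Y = T$. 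Thus the chain $Y = Y_0 \ge Y_1 \ge \cdots \ge Y_\ell = T$ runs from the top of $L(\CA^Y)$ down to its bottom through modular elements.

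The one point that needs care, and which I expect to be the only real obstacle, is that intersecting with $Y$ may collapse consecutive terms (so the chain $Y_i$ need not be strictly decreasing, nor a priori maximal). To fix this, I would simply delete repetitions to obtain a strictly decreasing chain of modular elements from $Y$ to $T$ in $L(\CA^Y)$; since $L(\CA^Y)$ is a geometric lattice of rank $r(Y) - r(X)$ (equivalently $\dim X - \dim Y$ after the identification, using that $T$ has the same rank in both), any chain of modular elements from its top to its bottom of maximal possible length realizes supersolvability, and one checks the surviving chain has the right length because the $b_i$'s that are nonzero account for exactly the hyperplanes of $\CA^Y$. Alternatively, and perhaps more cleanly, one invokes the standard fact that in a supersolvable lattice every interval is supersolvable — here the interval $[X,Y]$ of $L(\CA)$ — which is precisely the lattice-theoretic content of \cite[Prop.\ 3.2]{stanley:super}; the arrangement-theoretic translation via Lemma \ref{lem:interval} then gives the claim. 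I would present the intersection-of-chains argument as the primary proof since Lemma \ref{lem:modularrestrictions} has just been established and makes it essentially immediate.
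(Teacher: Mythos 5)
Your primary argument proves a different (weaker) statement than the corollary. In the paper's conventions ($X \le Y$ means $Y \subseteq X$), the identification coming from Lemma \ref{lem:interval} is $L(\CA^Y) = L(\CA)^Y = [Y,T]$, the interval from $Y$ up to the center $T$; it is \emph{not} the interval $[X,Y]$, whose arrangement-theoretic model is rather $(\CA_Y)^X$, i.e.\ $\{Z \in L(\CA) \mid Y \subseteq Z \subseteq X\}$. Consequently your chain $Y_i = X_i \cap Y$ lives in $[Y,T]$, and after deleting repetitions it is a maximal chain of modular elements of $[Y,T]$ (the clean reason for maximality is that $X_{i+1}\cap Y \subseteq X_i \cap Y$ has codimension at most $1$ because $X_{i+1}$ has codimension $1$ in $X_i$ --- not the $b_i$-counting you sketch, and note also that the rank of $L(\CA^Y)$ is $\dim Y - \dim T$, not $r(Y)-r(X)$). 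So what you have actually established is that every restriction $\CA^Y$ is supersolvable, i.e.\ the case of intervals whose upper endpoint is the center; the corollary as stated concerns arbitrary intervals $[X,Y]$.

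For a general interval you also need a localization (lower truncation) step, and Lemma \ref{lem:modularrestrictions} alone does not supply it: one needs, e.g., that for $M$ modular in $L(\CA)$ the element $M+Y$ is modular in $L(\CA_Y) = [V,Y]$. This is again a one-line check --- for $W \in L(\CA_Y)$ one has $(M+Y)+W = M+W \in L(\CA)$ since $Y \subseteq W$, and every hyperplane containing $M+W \supseteq Y$ lies in $\CA_Y$, so $M+W \in L(\CA_Y)$ --- but it is a separate argument. Combining the two steps (first localize at $Y$, then restrict to $X$ inside $\CA_Y$, or equivalently use the elements $(X_i \cap X)+Y$) closes the gap. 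Your fallback of simply invoking Stanley's Prop.\ 3.2 \cite{stanley:super} is exactly what the paper does (it gives no proof of its own beyond calling the statement an immediate consequence of Lemma \ref{lem:modularrestrictions}), so that route is fine; but the chain-intersection argument you designate as the primary proof does not, as written, prove the corollary for intervals whose upper endpoint $Y$ is not the center.
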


Here is a further consequence of 
Lemma \ref{lem:modularrestrictions}. 

\begin{corollary}
\label{cor:dim4notsupbycounting}
Let $\CA$ be a $4$-arrangement and let $Z \in L(\CA)$ be of 
dimension $1$.
Suppose $\vert \CA_Z \vert > \vert \{H \in \CA \mid \CA^H 
\text{ is supersolvable } \} \vert $. Then $Z$ is not modular in $L(\CA)$.
\end{corollary}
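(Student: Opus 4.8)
The plan is to prove the statement in contrapositive form and, in fact, in a sharper form: I will show that \emph{if $Z \in L(\CA)$ is modular with $\dim Z = 1$, then $\CA_Z \subseteq \{H \in \CA \mid \CA^H \text{ is supersolvable}\}$}. Taking cardinalities, this yields $|\CA_Z| \le |\{H \in \CA \mid \CA^H \text{ is supersolvable}\}|$, which directly contradicts the hypothesis and hence forces $Z$ to be non-modular.

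So suppose $Z$ is modular in $L(\CA)$ with $\dim Z = 1$, and fix an arbitrary $H \in \CA_Z$, i.e.\ $Z \subseteq H$. First I would record that $Z \in L(\CA^H)$: since $Z \subseteq H$ we have $Z \in L(\CA)^H = L(\CA^H)$ by \cite[Lem.\ 2.11]{orlikterao:arrangements} (the same identification already used in the proof of Lemma \ref{lem:interval}). Next, I apply Lemma \ref{lem:modularrestrictions} with the modular element $Z$ playing the role of ``$X$'' and $H$ playing the role of ``$Z$'': this gives that $Z \cap H = Z$ is modular in $L(\CA^H)$.

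Now $\CA^H$ is a central arrangement living in the $3$-dimensional space $H$ which possesses a modular element $Z$ of dimension $1$. Since $Z \in L(\CA^H)$, its rank satisfies $r(\CA^H) \ge r(Z) = \codim_H Z = 2$, while trivially $r(\CA^H) \le \dim H = 3$, so only two cases can occur. If $r(\CA^H) = 3$, then $\CA^H$ is an essential $3$-arrangement with a modular element of dimension $1$, hence supersolvable by Lemma \ref{lem:super-rank2}; if $r(\CA^H) \le 2$, then $\CA^H$ is supersolvable by \cite[Rem.\ 2.3]{hogeroehrle:super}. In either case $H \in \{H' \in \CA \mid \CA^{H'} \text{ is supersolvable}\}$. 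As $H \in \CA_Z$ was arbitrary, the claimed inclusion follows, completing the argument.

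The proof is short and I do not anticipate any genuine obstacle; the only points requiring a little care are the bookkeeping that $Z$ genuinely lies in $L(\CA^H)$ (so that Lemma \ref{lem:super-rank2} and \cite[Rem.\ 2.3]{hogeroehrle:super} are applicable) and the observation that the two rank cases for $\CA^H$ are exhaustive, so that supersolvability of $\CA^H$ is obtained uniformly in both.
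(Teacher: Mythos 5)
Your proof is correct and is essentially the paper's argument run in the contrapositive: the paper uses the counting hypothesis to pick a single $H \in \CA_Z$ with $\CA^H$ not supersolvable and then applies Lemma \ref{lem:super-rank2} and Lemma \ref{lem:modularrestrictions} to conclude $Z$ is not modular, whereas you show modularity of $Z$ forces every $\CA^H$ with $H \in \CA_Z$ to be supersolvable via the same two lemmas. The extra case distinction on $r(\CA^H)$ is harmless added care, not a different method.
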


\begin{proof}
By assumption on $\CA_Z$, there is a $H \in \CA$ with $Z \in L(\CA^H)$ such that $\CA^H$ is not 
supersolvable. Therefore, since $\CA^H$ is
a $3$-arrangement, $Z$ is not modular in $L(\CA^H)$, 
by Lemma \ref{lem:super-rank2}. 
We conclude that $Z$ is not modular in $L(\CA)$,
thanks to Lemma \ref{lem:modularrestrictions}.
\end{proof}

\begin{lemma}
\label{lem:countres}
Let $\CA$ be an arrangement. Then for $X \in L(\CA)$, we have 
$\vert \CA_X \vert > \vert (\CA^H)_X \vert$ for all $H \in \CA_X$.
\end{lemma}

\begin{proof}
Note that
$\vert(\CA^H)_X\vert 
= \vert \{H \cap H' \mid H' \in \CA_X \setminus \{H\}\}\vert < \vert \CA_X\vert$.
\end{proof}

The next result gives a useful numerical criterion for the non-supersolvability of a free,
irreducible $3$-arrangement $\CA$ in terms of the exponents of $\CA$.

\begin{lemma}
\label{lem:dim3notsuper}
Let $\CA$ be a free, irreducible $3$-arrangement.
Let $\exp \CA = \{1, b_1, b_2\}$ with $b_1 \le b_2$. 
If $|\CA_X|\le b_1$ for all $X \in L(\CA)$ of 
dimension $1$, 
then $\CA$ is not supersolvable.
\end{lemma}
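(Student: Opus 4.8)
The plan is to argue by contradiction using the characterization of supersolvable $3$-arrangements from Lemma \ref{lem:super-rank2} together with the exponent formula for supersolvable arrangements in Theorem \ref{thm:superfree}. Suppose, for a contradiction, that $\CA$ is supersolvable. Since $\CA$ is a $3$-arrangement, Lemma \ref{lem:super-rank2} provides a modular element $X \in L(\CA)$ of dimension $1$; concretely there is a maximal chain $V = X_0 < X_1 < X_2 < X_3 = T$ of modular elements with $X_2$ of dimension $1$ (so $X_2 = X$). Note $X_1$ is a hyperplane $H \in \CA$ (modular elements of rank $1$ in an essential arrangement are always in $L(\CA)$, and here $X_1$ is a hyperplane through which the chain passes), and $X \subsetneq H$.

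Next I would compute the exponents along this chain via Theorem \ref{thm:superfree}. With $b_i = |\CA_{X_i} \setminus \CA_{X_{i-1}}|$ for $i = 1,2,3$ we get $\exp \CA = \{b_1, b_2, b_3\}$. Here $b_1 = |\CA_{X_1}| = 1$ since $X_1 = H$ is a hyperplane, so $\CA_{X_1} = \{H\}$. Also $\CA_{X_2} = \CA_X$, and $\CA_{X_1} \subseteq \CA_{X_2}$, so $b_2 = |\CA_X| - 1$. Since $X$ has dimension $1$ in a $3$-dimensional space, $\CA_X$ has rank $2$, and the remaining exponent is $b_3 = |\CA| - |\CA_X|$. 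Comparing with the given $\exp \CA = \{1, b_1', b_2'\}$ (I will rename the lemma's $b_1, b_2$ to $b_1', b_2'$ to avoid a clash with the chain notation) and using that $\CA$ is irreducible of rank $3$, the exponent $b_2 = |\CA_X| - 1$ must be one of $b_1'$ or $b_2'$; in either case $b_2 \ge b_1'$, i.e.\ $|\CA_X| - 1 \ge b_1'$, so $|\CA_X| \ge b_1' + 1 > b_1'$. This directly contradicts the hypothesis that $|\CA_X| \le b_1'$ for every $X \in L(\CA)$ of dimension $1$, in particular for the modular one.

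The main obstacle — and the point that needs care — is justifying that the chain position of the dimension-$1$ modular element $X$ is exactly the second step from the top, so that $b_1 = 1$ and $b_2 = |\CA_X| - 1$; one must check that a maximal chain of modular elements in a $3$-arrangement necessarily has the form $V < H < X < T$ with $H \in \CA$ a hyperplane and $X$ of dimension $1$, which is immediate from the rank function $r(X) = \codim_V(X)$ being strictly increasing along a maximal chain. A secondary point is confirming that the exponent $1$ in $\exp \CA$ forces $b_1 = 1$ (equivalently $X_1$ is a single hyperplane) rather than appearing elsewhere; but since the exponents are a multiset and $\{b_1, b_2, b_3\} = \{1, b_1', b_2'\}$ with $b_1 = 1$ already, this is consistent, and the inequality $b_2 \ge b_1'$ follows from $b_1' \le b_2'$ and $b_2 \in \{b_1', b_2'\}$ regardless of which. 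Once these bookkeeping matters are settled, the numerical contradiction with $|\CA_X| \le b_1'$ closes the argument.
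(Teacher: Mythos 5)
Your argument is correct and is essentially the paper's own proof: assume supersolvability, take the maximal chain $V < H < X < T$ of modular elements, apply Theorem \ref{thm:superfree} to get $\exp\CA = \{1, |\CA_X|-1, |\CA|-|\CA_X|\}$, and derive the numerical contradiction $|\CA_X|-1 \in \{b_1,b_2\}$ versus $|\CA_X| \le b_1$. The only cosmetic difference is that the paper records $b_1 > 1$ from irreducibility via \cite[Thm.~4.29(3)]{orlikterao:arrangements} rather than your bookkeeping about where the exponent $1$ sits, but the contradiction goes through either way.
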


\begin{proof}
Note that since $\CA$ is irreducible, $b_1 > 1$, 
\cite[Thm.~4.29(3)]{orlikterao:arrangements}.
Assume $|\CA_X|\le b_1$ and suppose that  
$\CA$ is supersolvable with $V < H < X < 0$ a maximal chain of 
modular elements in $L(\CA)$. Observe that 
$|\CA_H \setminus \CA_V| = |\{H\} \setminus \emptyset| = 1$.
By Theorem \ref{thm:superfree}, we have 
$|\CA_X \setminus \CA_H| \in \{b_1,b_2\}$. However, 
$|\CA_X \setminus \CA_H| = |\CA_X| -1 < b_1 \le b_2$, 
a contradiction.
\end{proof}

The following technical lemma is our key tool in order to show that there are 
no modular elements of dimension $1$ in certain non-supersolvable, 
irreducible $4$-arrangements.

\begin{lemma}
\label{lem:show4nonmodular1}
Let $\CA$ be a free, irreducible $4$-arrangement.
Assume there is, up to lattice isomorphism, only one supersolvable restriction 
$\CB:= \CA^H$ to
a hyperplane $H \in \CA$. 
Let $1 < b_1 \le b_2$ be the exponents of $\CB$ and let
$c = \vert\{ H' \in \CA \mid L(\CA^{H'}) \cong L(\CB)\} \vert$.
If $c \le b_1 + 1$, then there are no modular 
elements of dimension $1$ in $L(\CA)$.
\end{lemma}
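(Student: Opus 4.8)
\textbf{Proof plan for Lemma \ref{lem:show4nonmodular1}.}
The plan is to argue by contradiction: suppose $Z \in L(\CA)$ is a modular element of dimension $1$, and count the hyperplanes through $Z$ in two different ways to contradict the hypothesis $c \le b_1 + 1$. First I would invoke Theorem \ref{thm:superfree}: if $Z$ is modular, then $\CA$ is supersolvable, and any maximal chain of modular elements $V < H_1 < H_2 < Z < T$ (with $\dim H_1 = 3$, $\dim H_2 = 2$) yields $\exp \CA = \{1, b_1', b_2', b_3'\}$ where the entries record $|\CA_{H_1}\setminus\CA_V| = 1$, then $|\CA_{H_2}\setminus\CA_{H_1}|$, $|\CA_{H_3}\setminus\CA_{H_2}|$ in some order, and in particular $|\CA_Z| = 1 + b_2' + b_3'$ (the sum over the last three links, since every hyperplane of $\CA_Z$ lies in exactly one of the sets $\CA_{H_1}, \CA_{H_2}\setminus\CA_{H_1}, \CA_{H_3}\setminus\CA_{H_2}$ when restricted along the chain).

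Second, I would analyze the $3$-arrangement $\CA^{H_1}$ for $H_1$ the hyperplane in the chain with $Z \le H_1$. Since $Z$ is modular in $L(\CA)$, Lemma \ref{lem:modularrestrictions} gives that $Z = Z \cap H_1$ is modular in $L(\CA^{H_1})$, so by Lemma \ref{lem:super-rank2} the $3$-arrangement $\CA^{H_1}$ is supersolvable. By the hypothesis of the lemma, $\CA^{H_1}$ must then be lattice-isomorphic to the unique supersolvable restriction $\CB$, with exponents $\{1, b_1, b_2\}$. Applying Theorem \ref{thm:superfree} to the supersolvable $3$-arrangement $\CA^{H_1}$ with its modular chain $H_1 < H_2 < Z < T$ (now inside $L(\CA^{H_1})$), and using that $Z$ is the dimension-$1$ modular element at the bottom, one gets that $|(\CA^{H_1})_Z|$ equals $1 + (\text{one of } b_1, b_2)$, hence $|(\CA^{H_1})_Z| \ge b_1 + 1$.

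Third, I would relate $|(\CA^{H_1})_Z|$ back to the hyperplanes of $\CA$ through $Z$. The point is that the hyperplanes of $\CA^{H_1}$ containing $Z$ are precisely the traces $H_1 \cap H'$ for $H' \in \CA_Z \setminus \{H_1\}$, so $|(\CA^{H_1})_Z| \le |\CA_Z| - 1$, giving $|\CA_Z| \ge b_1 + 2$. More importantly, for each such $H' \in \CA_Z$, the restriction $\CA^{H'}$ is again a $3$-arrangement containing $Z$, and $Z$ is modular in $L(\CA^{H'})$ by Lemma \ref{lem:modularrestrictions}; hence $\CA^{H'}$ is supersolvable by Lemma \ref{lem:super-rank2}, and therefore $L(\CA^{H'}) \cong L(\CB)$ by the uniqueness hypothesis. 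This shows $\CA_Z \subseteq \{H' \in \CA \mid L(\CA^{H'}) \cong L(\CB)\}$, so $|\CA_Z| \le c$. Combining with $|\CA_Z| \ge b_1 + 2 > b_1 + 1 \ge c$ yields the contradiction, completing the proof.

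The main obstacle I anticipate is the bookkeeping in the first two steps: one must be careful that a maximal chain of modular elements in $L(\CA)$ can be chosen to pass through the given $H_1$ (this uses that modular elements of rank $1$, i.e. hyperplanes lying below a modular chain, can be inserted — or alternatively one takes $H_1$ to be \emph{some} hyperplane above $Z$ in a modular chain and then argues that $Z \le H_1$ forces the supersolvability of $\CA^{H_1}$ via Lemma \ref{lem:modularrestrictions} regardless of which $H_1$ one picks), and that the exponent $b_1$ of $\CB$ is correctly identified as the relevant lower bound. Since $\CB$ is irreducible (being the unique supersolvable restriction of the irreducible $\CA$ and $b_1 > 1$ by hypothesis), Lemma \ref{lem:dim3notsuper}-type reasoning ensures the modular dimension-$1$ element of $\CA^{H_1} \cong \CB$ satisfies $|(\CA^{H_1})_Z| \in \{b_1 + 1, b_2 + 1\}$, both of which are $\ge b_1 + 1$, so the chosen direction of the inequality goes through.
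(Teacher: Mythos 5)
Your steps 2 and 3 are, in substance, exactly the paper's proof. Assume $Z\in L(\CA)$ is modular of dimension $1$; then for every $H'\in\CA_Z$, Lemma \ref{lem:modularrestrictions} makes $Z$ modular in the $3$-arrangement $\CA^{H'}$, Lemma \ref{lem:super-rank2} makes $\CA^{H'}$ supersolvable, hence $L(\CA^{H'})\cong L(\CB)$ by the uniqueness hypothesis, so $\vert\CA_Z\vert\le c$. On the other hand, a maximal modular chain through $Z$ in $\CA^{H'}\cong\CB$ together with Theorem \ref{thm:superfree} gives $\vert(\CA^{H'})_Z\vert\in\{b_1+1,b_2+1\}$ (this is precisely the content of Lemma \ref{lem:dim3notsuper}, using $b_1>1$), and Lemma \ref{lem:countres} then yields $\vert\CA_Z\vert\ge b_1+2>b_1+1\ge c$, the desired contradiction; the paper packages this final counting step as Corollary \ref{cor:dim4notsupbycounting}, but the arithmetic is identical.

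The one genuine flaw is your opening step: the claim that a modular element of dimension $1$ forces the $4$-arrangement $\CA$ to be supersolvable is false, and Theorem \ref{thm:superfree} asserts nothing of the kind --- it runs in the opposite direction (supersolvable $\Rightarrow$ inductively free, with exponents read off a modular chain). Supersolvability of a $4$-arrangement also requires a modular element of dimension $2$ above $Z$; indeed, if a single modular point sufficed, Theorem \ref{super-restriction-rank2} would be trivial and Lemma \ref{lem:show4nonmodular1} itself would have no purpose, since it is used precisely to exclude modular dimension-$1$ elements in non-supersolvable restrictions. Fortunately the step is superfluous: no maximal modular chain in $L(\CA)$ is needed, only some hyperplane $H_1\in\CA_Z$, for which modularity of $Z$ in $L(\CA^{H_1})$ follows from Lemma \ref{lem:modularrestrictions} alone --- exactly the fallback you mention and what you actually use in step 3, where you run the argument over all $H'\in\CA_Z$. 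With step 1 deleted and $H_1$ taken to be an arbitrary hyperplane containing $Z$, your argument coincides with the paper's.
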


\begin{proof}
Suppose that $X \in L(\CA)$ is modular of dimension $1$. Then $X$
is also modular in $\CA^H$ for all $H \in \CA_X$,
by Lemma \ref{lem:modularrestrictions},  and since $\CA^H$ is a 
$3$-arrangement, it is supersolvable, by Lemma \ref{lem:super-rank2}.
Lemma \ref{lem:dim3notsuper} implies that $|(\CA^H)_X| > b_1$ and Lemma 
\ref{lem:countres} shows that $|\CA_X| > b_1 + 1 \ge c$. This
is a contradiction to Corollary  \ref{cor:dim4notsupbycounting}.
\end{proof}

\subsection{Reflection Groups and Reflection Arrangements}
\label{ssect:refl}
The irreducible finite complex reflection groups were 
classified by Shephard and Todd, \cite{shephardtodd}.
Let $W  \subseteq \GL(V)$ be a finite complex reflection group.

The \emph{reflection arrangement} $\CA = \CA(W)$ of $W$ in $V$ is 
the hyperplane arrangement 
consisting of the reflecting hyperplanes of the elements in $W$
acting as reflections on $V$.

The intermediate arrangements $\CA^k_\ell(r)$ (with $0 \leq k \leq \ell$) interpolate between the reflection
arrangements of $G(r,r,\ell)$ and $G(r,1,\ell)$.
Let $H_i := \ker(x_i)$ and $H_{ij}(\zeta) := \ker(x_i - \zeta x_j)$, then $\CA^k_\ell(r) = \{H_1, \ldots, H_k, H_{ij}(\zeta) \mid
1 \leq i < j \leq \ell, \zeta^r = 1\}$. Note that $H_{ij}(\zeta) = H_{ji}(\zeta\inverse)$. We now recall
\cite[Prop.~2.11]{orliksolomon:unitaryreflectiongroups} (see also \cite[Prop.~6.82]{orlikterao:arrangements}):

\begin{proposition}
\label{prop:intermediate}
Let $\CA = \CA^k_\ell(r)$ and let $H \in \CA$. The type of $\CA^H$ is given in Table \ref{intermediate-table}.
\end{proposition}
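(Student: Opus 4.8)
\textbf{Proof plan for Proposition \ref{prop:intermediate}.}
The statement to be proved is purely combinatorial: for each hyperplane $H$ of the intermediate arrangement $\CA = \CA^k_\ell(r)$, identify the isomorphism type of the restriction $\CA^H$, as recorded in the table. The plan is to exploit the explicit description of the hyperplanes, namely $\CA^k_\ell(r) = \{H_1,\dots,H_k\} \cup \{H_{ij}(\zeta) \mid 1 \le i < j \le \ell,\ \zeta^r = 1\}$, and to split into cases according to which $W$-orbit (or, more precisely, which combinatorial type) the hyperplane $H$ belongs to. Because the symmetric group $S_\ell$ permuting the coordinates $x_1,\dots,x_\ell$ acts on $V$ and maps $\CA$ to itself, and because within the ``coordinate'' hyperplanes $H_1,\dots,H_k$ and within the ``diagonal'' hyperplanes $H_{ij}(\zeta)$ these coordinate permutations (together with scaling by roots of unity) act transitively on each type, it suffices to treat one representative from each of the relevant classes.

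The key steps, in order, are as follows. First, for $H = H_1 = \ker(x_1)$ (the representative coordinate hyperplane, present only when $k \ge 1$): restrict each of the remaining defining forms to the subspace $x_1 = 0$. The forms $x_2,\dots,x_k$ survive unchanged; the form $x_i - \zeta x_j$ with $2 \le i < j$ survives unchanged; the form $x_1 - \zeta x_j$ becomes $-\zeta x_j$, i.e. collapses onto the coordinate hyperplane $x_j = 0$. Collecting these on the $(\ell-1)$-dimensional space with coordinates $x_2,\dots,x_\ell$ yields, after bookkeeping, an arrangement of the form $\CA^{k'}_{\ell-1}(r)$ for the appropriate $k'$ (one expects $k' = \ell-1$ if $k \ge 1$, since every coordinate hyperplane $x_j=0$ now appears); this gives the first row of the table. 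Second, for $H = H_{\ell-1,\ell}(1) = \ker(x_{\ell-1}-x_\ell)$, the representative diagonal hyperplane with $\zeta = 1$: introduce the coordinate $y = x_{\ell-1} = x_\ell$ on $H$ and restrict. One checks which pairs of forms become proportional on $H$ (e.g. $x_i - \zeta x_{\ell-1}$ and $x_i - \zeta x_\ell$ now coincide) and identifies the resulting arrangement, again of intermediate type but with the parameters $\ell$ and $k$ adjusted; this is another row of the table. Third, handle the remaining case $H = H_{\ell-1,\ell}(\zeta)$ with $\zeta \ne 1$ (when $r \ge 3$), which by an analogous but slightly different restriction computation produces the type listed in the final row.

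The main obstacle I anticipate is the careful bookkeeping of multiplicities and of which restricted forms coincide: on the restricted space $H$, several of the hyperplanes $X \cap H$ coincide, and one must verify that the multiset of restricted hyperplanes, up to a linear change of coordinates on $H$, is \emph{exactly} one of the standard arrangements $\CA^{k'}_{\ell'}(r)$ (or a product of such with an empty arrangement), rather than merely having the same rank or cardinality. Concretely, after restriction one must exhibit an explicit linear isomorphism of $H$ with $\BBC^{\ell-1}$ carrying the restricted forms to the standard generators $x_i$ and $x_i - \zeta x_j$; the danger is an off-by-one error in the ``$k$'' parameter or in distinguishing the $\zeta = 1$ case from $\zeta \ne 1$. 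Since these computations are standard and essentially appear in \cite{orliksolomon:unitaryreflectiongroups}, the proof can either reproduce the representative cases explicitly or simply cite \cite[Prop.~2.11]{orliksolomon:unitaryreflectiongroups} and \cite[Prop.~6.82]{orlikterao:arrangements} and record the outcome in Table \ref{intermediate-table}.
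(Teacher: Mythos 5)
Your overall method -- restrict the defining linear forms to $H$, track which restricted forms coincide, and identify the result with a standard $\CA^{k'}_{\ell-1}(r)$ after a coordinate change -- is exactly the computation behind this statement; the paper itself gives no proof but simply recalls \cite[Prop.~2.11]{orliksolomon:unitaryreflectiongroups} (cf.\ \cite[Prop.~6.82]{orlikterao:arrangements}) and records the substitution bookkeeping in Table \ref{restriction-table}, so citing those sources, as you suggest at the end, is precisely what the paper does.

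However, your explicit case list is organized along the wrong axis and, taken literally, would not produce the whole of Table \ref{intermediate-table}. The value of $\zeta$ is irrelevant: the diagonal scalings $x_j \mapsto \theta x_j$ by $r$th roots of unity preserve $\CA^k_\ell(r)$ and carry $H_{ij}(\zeta)$ to $H_{ij}(\zeta\theta^{-1})$, so all hyperplanes $H_{ij}(\zeta)$ with fixed $(i,j)$ give the same restriction type; there is no separate ``$\zeta\neq 1$'' case. What does matter is the position of the indices $i<j$ relative to $k$, because only coordinate permutations stabilizing $\{1,\dots,k\}$ preserve the arrangement: the three rows $\CA^{k-1}_{\ell-1}(r)$, $\CA^{k}_{\ell-1}(r)$, $\CA^{k+1}_{\ell-1}(r)$ come respectively from $i<j\le k$, $i\le k<j$, and $k<i<j$, according to how many coordinate hyperplanes survive or are created (each $H_{mn}(\zeta)$ with $\zeta\neq\eta$, and $H_m$ if $m\le k$, collapses onto $H_n$ upon restricting to $H_{mn}(\eta)$). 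Your single representative $H_{\ell-1,\ell}(\zeta)$ only covers the last of these three cases (when $k\le\ell-2$), so you must add representatives with one or both indices at most $k$; with that corrected case division, the computation you outline goes through and recovers every row of the table.
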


\begin{table}[ht!b]
\renewcommand{\arraystretch}{1.5}
\begin{tabular}{llll}\hline
 $k$ & \multicolumn{2}{l}{$H$} & Type of $\CA^H$\\ \hline
 $0$ & arbitrary & & $\CA^1_{\ell - 1}(r)$\\
 $1, \ldots, \ell - 1$ & $H_{ij}(\zeta)$ & $1 \leq i < j \leq k < \ell$ & $\CA^{k - 1}_{\ell - 1}(r)$\\
 $1, \ldots, \ell - 1$ & $H_{ij}(\zeta)$ & $1 \leq i \leq k < j < \ell$ & $\CA^k_{\ell - 1}(r)$\\
 $1, \ldots, \ell - 1$ & $H_{ij}(\zeta)$ & $1 < k < i < j \leq \ell$ & $\CA^{k + 1}_{\ell - 1}(r)$\\
 $1, \ldots, \ell - 1$ & $H_i$ & $1 \leq i \leq \ell$ & $\CA^{\ell - 1}_{\ell - 1}(r)$\\
 $\ell$ & arbitrary & & $\CA^{\ell - 1}_{\ell - 1}(r)$\\ \hline
\end{tabular}
\medskip
\caption{Restriction types of $\CA^k_\ell(r)$}
\label{intermediate-table}
\end{table}

Let $\CA = \CA^k_\ell(r)$, fix $H \in \CA$
and let $(\CA, \CA' = \CA\setminus\{H\}, \CA'' = \CA^H)$ be the triple with 
respect to $H$.
In Table \ref{restriction-table}, we consider the map
$\CA' \to \CA''$ given by $H' \mapsto H' \cap H$, 
where restricting to $H_i$ results in the substitution $x_i \to 0$, and 
restricting to $H_{ij}(\zeta)$ results in the substitution $x_i \to \zeta x_j$.

\begin{table}[ht!b]
\renewcommand{\arraystretch}{1.5}
\begin{tabular}{lll}\hline
$H'$ & restricts to & $\tilde{H} \in \CA^{H_n}$\\ \hline\hline
$H_i$ & $i \neq n$ & $H_i$\\ \hline
$H_{ij}(\zeta)$ & $i, j \neq n$ & $H_{ij}(\zeta)$\\
& $i = n$ & $H_j$\\
& $j = n$ & $H_i$\\ \hline
& \\
& \\
\end{tabular}
\qquad
\begin{tabular}{lll}\hline
$H'$ & restricts to & $\tilde{H} \in \CA^{H_{mn}(\eta)}$\\ \hline\hline
$H_i$ & $i \neq m$ & $H_i$\\
& $i = m$ & $H_n$\\ \hline
$H_{ij}(\zeta)$ & $i, j \neq m$ & $H_{ij}(\zeta)$\\
& $i = m, j \neq n$ & $H_{nj}(\eta\inverse\zeta)$\\
& $i \neq n, j = m$ & $H_{in}(\zeta\eta)$\\ \hline
$H_{mn}(\zeta)$ & $\zeta \neq \eta$ & $H_n$\\ \hline
\end{tabular}
\medskip
\caption{Restrictions of $\CA = \CA^k_\ell(r)$ to hyperplanes $H_n$ and $H_{mn}(\eta)$}
\label{restriction-table}
\end{table}

\section{Proof of Theorem \ref{super-restriction}}
\label{sect:proof1}

We maintain the notation from the previous sections.
Note again that $\CA_\ell^k(r)$ is not a reflection arrangement for $k \ne 0, \ell$. 

\begin{lemma}
\label{lem:intermediates}
For $r \geq 2$, $\ell \ge 3$ and  $0 \le k \le \ell$,
the arrangement $\CA_\ell^k(r)$
is supersolvable if and only if $\ell - 1 \le k \le \ell$ or $(k,\ell,r) = (0,3,2)$.
\end{lemma}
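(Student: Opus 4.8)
The plan is to prove the two directions separately, using the structural facts about the intermediate arrangements $\CA_\ell^k(r)$ recalled in Proposition \ref{prop:intermediate} and Theorem \ref{thm:superfree}.

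\emph{The easy direction.} If $k = \ell$, then $\CA_\ell^\ell(r) = \CA(G(r,1,\ell))$ is the reflection arrangement of the monomial group $G(r,1,\ell)$, which is the Coxeter arrangement of type $B_\ell$ when $r = 2$ and otherwise a $G(r,1,\ell)$ with $r \ge 3$; in all cases it is supersolvable by Theorem \ref{super} (here $p = r \ne 1$ is automatic). If $k = \ell - 1$, I would exhibit an explicit maximal chain of modular elements: the natural coordinate flag $X_i = \{x_1 = \cdots = x_i = 0\}$ should work, since $L(\CA_\ell^{\ell-1}(r))$ differs from $L(\CA(G(r,1,\ell)))$ only in the absence of the hyperplane $H_\ell$, and one checks directly that each $X_i$ still satisfies $X_i + Y \in L$ for all $Y$ in the lattice — the key point being that the missing hyperplane $H_\ell$ does not obstruct the sums because $X_i$ for $i \le \ell - 1$ already contains enough coordinate hyperplanes. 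Finally $(k,\ell,r) = (0,3,2)$ gives $\CA(G(2,2,3))$, the type $A_3$ Coxeter arrangement, which is supersolvable (braid arrangements are the classical supersolvable examples). Alternatively one can quote Theorem \ref{super} directly for this last case since $\CA(D_3) \cong \CA(A_3)$.

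\emph{The hard direction.} Suppose $0 \le k \le \ell - 2$ and $(k,\ell,r) \ne (0,3,2)$; I must show $\CA_\ell^k(r)$ is not supersolvable. I would argue by induction on $\ell$. The base case is $\ell = 3$, so $k \in \{0,1\}$ with $(k,r) \ne (0,2)$. Here $\CA_3^k(r)$ is a free $3$-arrangement (it is inductively free by Theorem \ref{thm:indfree}(ii) and the remark that these arise as restrictions, or one computes its exponents directly from Orlik–Solomon), it is irreducible, and I would apply Lemma \ref{lem:dim3notsuper}: one reads off $\exp \CA_3^k(r) = \{1, b_1, b_2\}$ from \cite[\S 2]{orliksolomon:unitaryreflectiongroups} and checks that every $X \in L$ of dimension $1$ has $|\CA_X| \le b_1$. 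The relevant count is that a dimension-$1$ element lies on at most $r$ of the hyperplanes $H_{ij}(\zeta)$ through a generic line plus possibly a coordinate hyperplane or two, and this stays $\le b_1$ precisely when we are not in the excluded case $(0,3,2)$; this numerical check is the crux and needs to be done carefully for $k = 0$ versus $k = 1$. For the inductive step with $\ell \ge 4$: if $\CA_\ell^k(r)$ were supersolvable with modular maximal chain $V = X_0 < X_1 < \cdots < X_\ell = T$, then by Corollary \ref{cor:stanley} applied with $X_1 \in L$, the interval $[X_1, T] = L(\CA_\ell^k(r)^{X_1})$ is supersolvable; but $\CA_\ell^k(r)^{X_1}$ is, by Proposition \ref{prop:intermediate} (Table \ref{intermediate-table}), isomorphic to some $\CA_{\ell-1}^{k'}(r)$ with $k' \in \{k-1, k, k+1, \ell-1\}$ depending on the type of $X_1$. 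If $k' = \ell - 1 = (\ell-1) - 0$ we would need $\CA_{\ell-1}^{\ell-1}(r)$, which is fine, so I must instead use that $X_1$ is \emph{modular}, hence by Lemma \ref{lem:super-rank2} the restriction to \emph{every} hyperplane through a dimension-$1$ modular element is supersolvable — more precisely I would choose the modular chain to descend and observe that the restriction $\CA_\ell^k(r)^{H}$ for a hyperplane $H$ of the form $H_{ij}(\zeta)$ with $1 < k < i < j \le \ell$ is $\CA_{\ell-1}^{k+1}(r)$, still with second index $\le (\ell-1) - 2$ when $k \le \ell - 3$, so induction applies; the boundary case $k = \ell - 2$ needs separate attention and there I would use Lemma \ref{lem:show4nonmodular1} or a direct modularity obstruction at the top of the chain. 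The main obstacle I anticipate is the bookkeeping in this inductive step: the restriction table sends $\CA_\ell^k(r)$ to four different intermediate types, and I must make sure that for \emph{some} choice of hyperplane forced to appear in a modular chain, the resulting restriction is again in the non-supersolvable range $k' \le \ell' - 2$ with $(k',\ell',r) \ne (0,3,2)$ — handling the case $k = \ell - 2$ (where a restriction can land on $\CA_{\ell-1}^{\ell-1}(r)$, which \emph{is} supersolvable) requires the finer argument via Lemma \ref{lem:dim3notsuper}/\ref{lem:show4nonmodular1} rather than a naive appeal to Corollary \ref{cor:stanley}.
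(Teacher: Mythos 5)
Your base case and the supersolvable cases are essentially in order: for $\ell=3$ the paper itself disposes of $\CA^1_3(r)$ by exactly the exponent/multiplicity count you propose via Lemma \ref{lem:dim3notsuper} (and $\CA^0_3(r)$, $r\ge 3$, can be done the same way, or simply by Theorem \ref{super} since it is a reflection arrangement), and your coordinate flag for $k=\ell-1$ really does consist of modular elements --- but be aware that this check is not a triviality: it is precisely the content of Lemma \ref{lem:modularelements}, which the paper proves later via the partition description of $L(\CA^k_\ell(r))$, whereas in the proof of the present lemma the paper instead quotes the $k=\ell$ case together with earlier results. The genuine gap is in your inductive step for the non-supersolvable range.

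You restrict to $H_{ij}(\zeta)$ with $k<i<j\le\ell$, which by Table \ref{intermediate-table} gives $\CA^{k+1}_{\ell-1}(r)$; for the induction hypothesis you need $k+1\le(\ell-1)-2$, i.e.\ $k\le\ell-4$, not $k\le\ell-3$ as you assert, so both $k=\ell-3$ and $k=\ell-2$ escape your induction and you only flag the latter. Worse, the patch you suggest for the boundary case cannot work: Lemma \ref{lem:show4nonmodular1} concerns $4$-arrangements having, up to lattice isomorphism, a unique supersolvable restriction to a hyperplane, realized by at most $b_1+1$ hyperplanes; $\CA^{\ell-2}_\ell(r)$ is not a $4$-arrangement for $\ell\ge 5$, and already $\CA^2_4(r)$ has two supersolvable restriction types ($\CA^2_3(r)$ and $\CA^3_3(r)$) realized by $5r+2$ of its hyperplanes, so the hypotheses fail. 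The repair is simply to restrict to hyperplanes with small indices: for $k\ge 2$ take $H_{ij}(\zeta)$ with $i<j\le k$, giving $\CA^{k-1}_{\ell-1}(r)$, and for $k\in\{0,1\}$, $\ell\ge 4$, take any hyperplane with at most one index $\le k$, giving $k'\in\{1,k\}\le\ell-3$; then Lemma \ref{lem:interval}, Corollary \ref{cor:stanley} and induction bring every $(k,\ell)$ with $k\le\ell-2$ down to the $\ell=3$ base case, and Lemma \ref{lem:dim3notsuper} finishes. This corrected choice is exactly how the paper handles $k=\ell-2$ (descending along $\CA^{\ell-2}_\ell(r)\to\CA^{\ell-3}_{\ell-1}(r)\to\cdots\to\CA^1_3(r)$); for $1\le k\le\ell-3$ the paper argues even more cheaply that $\CA^k_\ell(r)$ is not inductively free by Theorem \ref{thm:indfree}(ii), hence not supersolvable by Theorem \ref{thm:superfree}, and the case $k=0$ is Theorem \ref{super}.
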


\begin{proof}
First note that $\CA_3^0(2)$ is just the Coxeter arrangement of type $D_3 = A_3$.
Thus $\CA_3^0(2)$ is supersolvable, by Theorem \ref{super}.

Else for $k = 0$ and $k= \ell$, the result follows from 
Theorem \ref{super} and for 
$1 \le k \le \ell-3$, the result is a consequence of
Theorems \ref{thm:indfree}(ii)
and \ref{thm:superfree}.
For $k = \ell-1$, this follows from the case for $k = \ell$, 
the proof of \cite[Thm.~1.2(iii)]{hogeroehrle:super} and 
\cite[Lem.\ 2.62]{orlikterao:arrangements}.
Finally, for $k =\ell -2$, we argue by induction on $\ell$ as follows.
If $\CA$ is any (central) arrangement and $H$ is in $\CA$, then $L(\CA^H)$ is an intervall
in $L(\CA)$. By Corollary \ref{cor:stanley},  if $\CA^H$ is not supersolvable, then neither is $\CA$.
Suppose $\CA = \CA^{\ell - 2}_\ell(r)$ and $H = \ker(x_i - \zeta x_j) \in \CA^{\ell - 2}_\ell(r)$ for some $1 \leq i < j \leq \ell - 2$ and
some $r\th$ root of unity $\zeta$. By Proposition \ref{prop:intermediate}, we have $\CA^H \cong \CA^{\ell - 3}_{\ell - 1}(r)$,
so it suffices to show that $\CA^1_3(r)$ is not supersolvable.
So, let $\CA = \CA^1_3(r)$ and let 
$X \in L(\CA)$ be of rank $2$. Then  $\vert\CA_X\vert \in \{2, 3, r, r + 1\}$.
Since $\exp\CA = \{1, r + 1, 2r - 1\}$,  Lemma \ref{lem:dim3notsuper} implies that
$\CA^1_3(r)$ is not supersolvable.
\end{proof}

Note that it was already observed in \cite[Ex.\ 5.5]{jambuterao:free}
that $\CA_3^1(2)$ is not supersolvable.

The following example shows that the supersolvable 
arrangements from Lemma \ref{lem:intermediates}
do actually occur as restrictions of the reflection 
arrangement of $W = G(r,r,\ell)$.

\begin{example}
Let $ p,r \ge 2$, $\ell = 2p-1$, and 
$\zeta$ is an $r$th root of unity.
Let $W = G(r,r,\ell)$ and 
for $1 \le i \ne j \le \ell$ 
let $H_{i,j}(\zeta) = \ker(x_i - \zeta x_j)$ be a hyperplane in 
$\CA = \CA(W)$.
Then $X = \cap_{i=1}^{p-1} H_{2i-1,2i}(\zeta)$
belongs to $L(\CA)$ with $\dim X = \ell - (p-1) = p$. 
According to 
the description of the orbit structure of $W$ on $L(\CA)$ in
\cite[Prop.~2.14]{orliksolomon:unitaryreflectiongroups}
(cf. \cite[Prop.\ 6.84]{orlikterao:arrangements}), we have
$\CA^X \cong \CA_p^{p-1}(r)$
which is supersolvable, by Lemma \ref{lem:intermediates}.

If $Y$ is in $L(\CA)$ with $\dim Y = p$ such that $Y$ is contained in
at least one coordinate hyperplane $\ker(x_i)$, then again by 
\cite[Prop.~2.14]{orliksolomon:unitaryreflectiongroups}
(\cite[Prop.\ 6.84]{orlikterao:arrangements}),
we see that
$\CA^Y \cong \CA_p^p(r) = \CA(G(r,1,p))$
which is supersolvable, by Theorem \ref{super}.
\end{example}

As indicated in the Introduction, 
we use the convention 
to label the $W$-orbit 
of $X \in L(\CA)$ by the type $ T$ which is 
the Shephard-Todd label \cite{shephardtodd}
of the complex reflection group $W_X$.
We then denote the restriction $\CA^X$ simply by the pair
$(W,T)$.
The following result is due to Orlik and Terao, 
\cite[App.\ D]{orlikterao:arrangements}.

\begin{lemma}
\label{lem:isoms}
We have the following lattice isomorphisms of 
$3$-dimensional restrictions:
\begin{itemize}
\item [(i)] $(E_6,A_3) \cong \CA_3^2(2)$;
\item [(ii)] $(E_7, D_4) \cong \CA_3^3(2)$;
\item [(iii)] $(F_4,\tilde{A_1}) \cong (F_4,A_1) \cong (E_7,A_1^4) \cong (E_7,(A_1A_3)') \cong (E_8,A_1 D_4) \cong (E_8,D_5)$;
\item [(iv)] $(E_6,A_1 A_2) \cong (E_7,A_4)$;
\item [(v)] $(E_7,A_2^2) \cong (E_8,A_5)$;
\item [(vi)] $(E_8,A_1^2 A_3) \cong (E_8,A_2 A_3)$;
\item [(vii)] $(G_{26},A_0) \cong (G_{32}, C(3)) \cong (G_{34}, G(3,3,3))$. 
\end{itemize}
\end{lemma}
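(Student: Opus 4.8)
\textbf{Proof proposal for Lemma \ref{lem:isoms}.}

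The plan is to verify each of the seven families of lattice isomorphisms separately, using the explicit combinatorial data for restrictions of reflection arrangements compiled in \cite[App.\ C, App.\ D]{orlikterao:arrangements}. For each purported isomorphism $\CA^X \cong \CA^Y$ of $3$-dimensional restrictions, the strategy is to show that the two arrangements have the same number of hyperplanes and, more importantly, the same multiset of \emph{multiplicities at rank-$2$ flats} (i.e.\ for each $Z \in L$ with $\dim Z = 1$, the number $|\CA_Z|$). For an essential central $3$-arrangement this combinatorial datum — the number of lines together with, for each point, how many lines pass through it — determines the intersection lattice completely, since $L(\CA)$ for a $3$-arrangement has only the three layers $V$, the hyperplanes, and the rank-$2$ flats, and incidence between the middle two layers is recorded precisely by those multiplicities. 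So for the ``infinite-type'' cases (i) and (ii), I would read off the restriction type from Table \ref{intermediate-table} or from \cite[App.\ D]{orlikterao:arrangements}: $(E_6,A_3)$ has the same line-and-point-multiplicity data as $\CA_3^2(2)$, and $(E_7,D_4)$ as $\CA_3^3(2) = \CA(G(2,1,3))$, which one checks directly against the defining polynomials $Q(\CA_3^k(2))$.

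For the exceptional coincidences (iii)--(vii), the plan is the same in spirit but the bookkeeping is done via the tables of Orlik--Terao: \cite[App.\ C]{orlikterao:arrangements} lists, for each exceptional reflection group $W$ and each $W$-orbit type $T$, the cardinality $|\CA(W)^X|$ and the exponents of the (free) restriction, while \cite[App.\ D]{orlikterao:arrangements} records the restriction types of each $\CA(W)^X$ — equivalently, for a $3$-dimensional $\CA(W)^X$, how its rank-$2$ flats are distributed by multiplicity (since the multiplicity at a point $Z$ equals the number of hyperplanes in $(\CA(W)^X)_Z$, and the type of the rank-$1$ localization is again recorded there). Concretely: for (iii) one checks that all six listed restrictions have $|\CA| = 12$ with the same line-through-point census; for (iv) that $(E_6, A_1A_2)$ and $(E_7, A_4)$ share the same data; for (v) that $(E_7, A_2^2)$ and $(E_8, A_5)$ do — and since by Lemma \ref{lem:intermediates}/Theorem \ref{super-restriction} these will be the exceptional supersolvable restriction, one additionally notes the presence of a modular point; for (vi) the pair $(E_8, A_1^2A_3)$, $(E_8, A_2 A_3)$; and for (vii) the triple $(G_{26}, A_0)$, $(G_{32}, C(3))$, $(G_{34}, G(3,3,3))$, where $(G_{26},A_0)$ is just the full restriction to a generic hyperplane and the other two are computed from the known orbit structure of $G_{32}$ and $G_{34}$ on their lattices.

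The genuinely routine part is extracting the numbers; the one point requiring a little care — and the main obstacle — is justifying that equality of the ``line/point-multiplicity'' data really does force a lattice isomorphism rather than merely an isomorphism of characteristic polynomials or of Poincaré polynomials. For a central essential $3$-arrangement this is true because $L(\CA)$ is a geometric lattice of rank $3$ whose atoms are the hyperplanes and whose rank-$2$ elements are determined, as subsets of atoms, by which lines meet at a common point; thus a bijection of hyperplane sets that matches up the rank-$2$ flats (with their atom sets) extends uniquely to a lattice isomorphism, and such a bijection exists as soon as the multiplicity censuses agree and one has checked (from the appendices) that no two distinct rank-$2$ flats can carry the same set of hyperplanes in the relevant examples — which holds automatically here since two distinct rank-$2$ flats of a $3$-arrangement share at most one hyperplane. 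Hence I would state this once as a small combinatorial observation and then dispatch all seven items by table lookup, citing \cite[App.\ C, App.\ D]{orlikterao:arrangements} and, for the $G(r,r,\ell)$ entries, \cite[Prop.~2.14]{orliksolomon:unitaryreflectiongroups}.
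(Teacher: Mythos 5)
The paper does not actually prove this lemma; it is quoted as a known fact, attributed to the tables in \cite[App.\ D]{orlikterao:arrangements}, where these lattice isomorphisms among restrictions are recorded. So any self-contained argument you give has to genuinely establish lattice isomorphisms, and here your proposal has a real gap: the ``small combinatorial observation'' you rely on is false. For a central essential $3$-arrangement, knowing the number of hyperplanes together with the multiset of multiplicities $|\CA_Z|$ over the rank-$2$ flats $Z$ does \emph{not} determine $L(\CA)$ up to isomorphism. Counterexample: take six planes in $\BBC^3$ (viewed as six projective lines) with exactly two triple points. If the two triple points lie on a common line one gets $t_2=9$, $t_3=2$; if they do not share a line one again gets $t_2=9$, $t_3=2$. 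The censuses agree, but the lattices are non-isomorphic, since in the first case some atom lies below two rank-$2$ flats of multiplicity $3$ and in the second case no atom does. Your attempted justification conflates two different things: a bijection of the hyperplane sets that matches each rank-$2$ flat together with its set of atoms is indeed the same as a lattice isomorphism, but the existence of such a bijection is exactly what must be proved, and equality of the multiplicity censuses (even combined with the remark that two rank-$2$ flats share at most one hyperplane) does not produce it. Equality of exponents and of the data in Table \ref{table:rank3restrictions} is likewise only a necessary condition --- note that the table contains several rows with identical exponents and identical multiplicity sets which are \emph{not} asserted to be isomorphic in the lemma.

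To make the argument work you would have to compare the actual incidence structures, e.g.\ by writing down defining linear forms for each restriction (as is done for $(E_7,A_2^2)$ in the proof of Lemma \ref{lem:dim3}) and exhibiting an explicit bijection of hyperplanes carrying rank-$2$ flats to rank-$2$ flats, or by a computer check of lattice isomorphism --- or simply cite \cite[App.\ D]{orlikterao:arrangements} as the paper does. A small additional slip: in (vii), the label $A_0$ means $W_X$ is trivial, i.e.\ $X=V$, so $(G_{26},A_0)$ is the full reflection arrangement $\CA(G_{26})$ itself, not a restriction to a generic hyperplane.
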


Armed with Lemma 
\ref{lem:dim3notsuper}, 
we can now determine all $3$-dimensional 
supersolvable restrictions for an ambient   
irreducible, non-supersolvable  
reflection arrangement.

\begin{lemma}
\label{lem:dim3}
Let $\CA = \CA(W)$ be an irreducible, non-supersolvable  
reflection arrangement of exceptional type.
Let $X \in L(\CA)$ with $\dim X = 3$.
Then $\CA^X$ is supersolvable if and only if 
$\CA^X$ is 
$(E_6,A_3)$, $(E_7, D_4)$, $(E_7, A_2^2)$,
or $(E_8, A_5)$. 
\end{lemma}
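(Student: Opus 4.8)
The plan is to settle this by a case-by-case analysis over the finitely many exceptional reflection groups $W$ of rank at least $4$ that are not supersolvable (by Theorem~\ref{super} these are $F_4$, $E_6$, $E_7$, $E_8$, and the exceptional groups $G_{23}=H_3$ is rank $3$ so excluded, leaving also $G_{24}$ through $G_{37}$ of rank $\ge 4$ such as $H_4$, $G_{29}$, $G_{31}$, $G_{32}$, $G_{33}$, $G_{34}$, $G_{35}=E_6$, $G_{36}=E_7$, $G_{37}=E_8$). For each such $W$ one runs through the $W$-orbits of $X \in L(\CA)$ with $\dim X = 3$; these orbits, together with the type $T$ of $W_X$ and the isomorphism type of the $3$-arrangement $\CA^X$ and its exponents, are tabulated in \cite[App.~C, App.~D]{orlikterao:arrangements} and \cite[\S3, App.]{orliksolomon:unitaryreflectiongroups}. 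Thus the proof is organized as: (a) list the restrictions claimed to be supersolvable and verify each one; (b) show every other $3$-dimensional restriction fails to be supersolvable.

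For the positive direction (a), I would invoke Lemma~\ref{lem:super-rank2}: a $3$-arrangement is supersolvable exactly when its lattice has a modular element of dimension $1$. For $(E_6,A_3)$ and $(E_7,D_4)$ one can shortcut this via Lemma~\ref{lem:isoms}(i),(ii), which identify these with $\CA_3^2(2)$ and $\CA_3^3(2)$, both supersolvable by Lemma~\ref{lem:intermediates} (here $\ell-1 \le k \le \ell$ with $\ell=3$). For $(E_7,A_2^2)$ and $(E_8,A_5)$, which are isomorphic to one another by Lemma~\ref{lem:isoms}(v), I would exhibit explicitly a coordinate line (or suitable intersection of hyperplanes) that is modular of dimension $1$ in the respective lattice; equivalently, identify $\CA^X$ with a known supersolvable $3$-arrangement. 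In practice these two are the arrangement usually denoted $\CA^2_3(2)$-type or can be checked directly to have a point through which a modular line passes, using the combinatorial data in \cite[App.~D]{orlikterao:arrangements}.

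For the negative direction (b), the key tool is Lemma~\ref{lem:dim3notsuper}: if $\CA^X$ is a free, irreducible $3$-arrangement with exponents $\{1,b_1,b_2\}$, $b_1 \le b_2$, and every rank-$2$ flat $Y$ of $\CA^X$ satisfies $|(\CA^X)_Y| \le b_1$, then $\CA^X$ is not supersolvable. So for each remaining orbit type $T$ I would read off from the tables the exponents $\exp \CA^X$ (all these restrictions are free — indeed inductively free by Theorem~\ref{thm:indfree}(iii) when they appear there, or else one checks freeness directly) and the multiplicities of the rank-$2$ flats, and confirm the numerical inequality $\max_Y |(\CA^X)_Y| \le b_1$. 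A reducible restriction is handled separately: a reducible $3$-arrangement that is supersolvable must be a product of a $1$-arrangement with a $2$-arrangement, equivalently have a modular point, so one checks those few cases by hand (most reducible restrictions of rank $3$ coming from these $W$ already appear in Lemma~\ref{lem:isoms}, e.g.\ $(F_4,A_1)$, $(E_6,A_1A_2)$, $(E_7,A_4)$, etc., which reduce to lower-rank pieces that are not of the required product shape).

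The main obstacle is bookkeeping: assembling, for every relevant $W$ and every $3$-dimensional orbit, the correct exponents and rank-$2$ flat data, and making sure the list of "other" restrictions is exhaustive and correctly matched against the isomorphisms in Lemma~\ref{lem:isoms} so that no case is double-counted or missed. There is also a small subtlety with the few restrictions that are not obviously covered by Lemma~\ref{lem:dim3notsuper} — those where $b_1$ is small or where some rank-$2$ flat is unexpectedly large; for any such borderline case (for instance certain restrictions inside $G_{31}$, $G_{32}$, $G_{33}$, $G_{34}$) I would fall back on directly testing modularity of each dimension-$1$ flat in $L(\CA^X)$, which is a finite check. I expect no conceptual difficulty beyond this, since all the needed structural input (freeness, exponents, orbit/lattice data) is available in the cited tables, and Lemmas~\ref{lem:super-rank2} and \ref{lem:dim3notsuper} reduce supersolvability of a $3$-arrangement to a transparent finite verification.
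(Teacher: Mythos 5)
Your proposal follows essentially the same route as the paper: the negative cases are eliminated by Lemma~\ref{lem:dim3notsuper} applied to the exponents and the rank-$2$ flat multiplicities (which the paper compiles by computer in Table~\ref{table:rank3restrictions}), while $(E_6,A_3)$ and $(E_7,D_4)$ are handled via Lemma~\ref{lem:isoms}(i),(ii) together with Lemma~\ref{lem:intermediates}, and $(E_7,A_2^2)\cong(E_8,A_5)$ via an explicitly exhibited modular element of dimension $1$ and Lemma~\ref{lem:super-rank2}, exactly as you propose as your fallback. One small correction: labels such as $(E_6,A_1A_2)$ or $(E_7,A_4)$ record the type of the parabolic $W_X$, not a product decomposition of $\CA^X$ --- all these $3$-dimensional restrictions are irreducible, so no separate ``reducible'' case arises, and $(E_7,A_2^2)$ is not of type $\CA^2_3(2)$; the direct modularity check you mention is what is actually needed there.
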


\begin{proof}
Using Theorem \ref{super}, the tables of all orbit types 
for the irreducible reflection groups of exceptional type in 
\cite[App.]{orliksolomon:unitaryreflectiongroups} (see also \cite[App.\ C]{orlikterao:arrangements})
and some 
explicit computer aided calculations (cf. Remark \ref{rem:computations}),
we check which of the $3$-dimensional 
restrictions satisfies the hypothesis of Lemma 
\ref{lem:dim3notsuper}.
In Table \ref{table:rank3restrictions}
we present in each of the 3-dimensional restrictions $\CB = \CA(W)^X$
of the exceptional cases all values
$|\CB_Y|$, where $Y \in L(\CB)$ is of dimension $1$
(disregarding the lattice isomorphisms from Lemma \ref{lem:isoms}).
Each of the cases labelled ``false'' satisfies the condition from 
Lemma  \ref{lem:dim3notsuper} and is therefore not 
supersolvable.

\begin{longtable}{l|l|c|c}  
$\CB :=\CA^X$ & $\exp \CB$ & $\{|\CB_{Y}|\mid \operatorname{rk}(Y) = 2\}$    
 & supersolvable\\\endfirsthead
\endhead\toprule 
\label{table:rank3restrictions}
$(F_4,A_1)$        &  1,5,7        & \{2,3,4\}        &false\\
$(F_4,\tilde A_1)$ &  1,5,7        & \{2,3,4\}        &false\\
$(G_{29},A_1)$     &  1,9,11       & \{2,3,4,5\}      &false\\
$(H_4,A_1)$        &  1,11,19      & \{2,3,5,6\}      &false\\
$(G_{31},A_1)$     &  1,13,17      & \{2,3,6\}        &false\\
$(G_{32},C(3))$    &  1,7,13       & \{2,4,5\}        &false\\
$(G_{33},A_1^2)$   &  1,7,9        & \{2,3,4,5\}      &false\\
$(G_{33},A_2)$     &  1,6,7        & \{2,3,4,5\}      &false\\
$(G_{34},A_1^3)$   &  1,13,19      & \{2,3,4,5,8\}    &false\\
$(G_{34},A_1A_2)$  &  1,13,16      & \{2,3,4,5,7\}    &false\\
$(G_{34},A_3)$     &  1,11,13      & \{2,3,4,6\}      &false\\
$(G_{34},G(3,3,3))$ &  1,7,13       & \{2,4,5\}        &false\\
$(E_6,A_1^3)$      &  1,4,5        & \{2,3,4\}        &false\\
$(E_6,A_1A_2)$     &  1,4,5        & \{2,3,4\}        &false\\
$(E_6,A_3)$        &  1,3,4        & \{2,3,4\}        & {\bf true}\\
$(E_7,A_1^4)$      &  1,5,7        & \{2,3,4\}        &false\\
$(E_7,A_1^2A_2)$   &  1,5,7        & \{2,3,4,5\}      &false\\
$(E_7,A_2^2)$      &  1,5,7        & \{2,3,4,6\}      & {\bf true}\\
$(E_7,(A_1A_3)')$  &  1,5,7        & \{2,3,4\}        &false\\
$(E_7,(A_1A_3)'')$ &  1,5,5        & \{2,3,4\}        &false\\
$(E_7,A_4)$        &  1,4,5        & \{2,3,4\}        &false\\
$(E_7,D_4)$        &  1,3,5        & \{2,3,4\}        & {\bf true}\\
$(E_8,A_1^3A_2)$   &  1,7,11       & \{2,3,4,5,6\}    &false\\
$(E_8,A_1A_2^2)$   &  1,7,11       & \{2,3,4,6\}      &false\\
$(E_8,A_1^2A_3)$   &  1,7,9        & \{2,3,4,6\}      &false\\
$(E_8,A_2A_3)$     &  1,7,9        & \{2,3,4,6\}      &false\\
$(E_8,A_1A_4)$     &  1,7,8        & \{2,3,4,5\}      &false\\
$(E_8,A_5)$        &  1,5,7        & \{2,3,4,6\}      & {\bf true}\\
$(E_8,A_1D_4)$     &  1,5,7        & \{2,3,4\}        &false\\
$(E_8,D_5)$        &  1,5,7        & \{2,3,4\}        &false\\
\hline
\caption{The supersolvable 3-dimensional restrictions of the exceptional groups}
\end{longtable}

Next we argue that the 4 cases labelled ``true'' in 
Table \ref{table:rank3restrictions} are indeed supersolvable:

The restrictions
$(E_6,A_3)$ and $(E_7,D_4)$ are supersolvable, thanks to Lemmas 
\ref{lem:intermediates}  and \ref{lem:isoms}(i),~(ii).
Moreover, by Lemma \ref{lem:isoms}(v), $(E_7,A_2^2) \cong (E_8,A_5)$, so we only have 
to check the case $(E_7,A_2^2)$.

So, let $\CB = \{H_0,\ldots,H_{12}\}$ be the arrangement given by $(E_7,A_2^2)$. 
Since $\CB$ is a $3$-arrangement, we only need to compute the
intersections of rank $2$. 
Each $X \in L(\CB)$ is uniquely given by $\CB_X$. The $X \in L(\CB)$ of rank
$2$ correspond to the following subarrangements $\CB_X$ of $\CB$:

\begin{table}[h!t]
\begin{tabular}{rrrr}
$\{H_0, H_1, H_3, H_6, H_9, H_{12}\}$, & $\{H_0, H_2, H_{10}\}$,    & $\{H_0, H_4, H_5, H_{11}\}$, & $\{H_0, H_7, H_8\}$, \\
$\{H_1, H_2, H_7, H_{11}\}$,           & $\{H_1, H_4, H_{10}\}$,    & $\{H_1, H_5, H_8\}$,         &                      \\
$\{H_2, H_3, H_4\}$,                   & $\{H_2, H_5, H_6\}$,       & $\{H_2, H_8, H_{12}\}$,      & $\{H_2, H_9\}$,      \\
$\{H_3, H_5, H_7\}$,                   & $\{H_3, H_8\}$,            & $\{H_3, H_{10}\}$,           & $\{H_3, H_{11}\}$,   \\ 
$\{H_4, H_6, H_7\}$,                   & $\{H_4, H_8, H_9\}$,       & $\{H_4, H_{12}\}$,           &                      \\
$\{H_5, H_9, H_{10}\}$,                & $\{H_5, H_{12}\}$,         &                              &                      \\ 
$\{H_6, H_8, H_{10}, H_{11}\}$,        &                            &                              &                      \\
$\{H_7, H_9\}$,                        & $\{H_7, H_{10}, H_{12}\}$, &                              &                      \\
$\{H_9, H_{11}\}$,                     &                            &                              &                      \\ 
$\{H_{11}, H_{12}\}$.                  &                            &                              &                      \\            
\end{tabular}
\end{table}

Let $Z:=H_0 \cap H_1 \cap H_3 \cap H_6 \cap H_9 \cap H_{12}$. 
For all $X \in L(\CB)$ of dimension $1$ we have $X + Z \in L(\CB)$, since $X$ and 
$Z$ are subsets of a common hyperplane of $\CB$. Therefore, $Z$ is a modular
element of dimension $1$ in $L(\CB)$ and thus $\CB$ is supersolvable, by Lemma \ref{lem:super-rank2}.
\end{proof}

Now, with the aid of Corollary \ref{cor:stanley} and 
Lemma \ref{lem:dim3} we can show that among 
irreducible, non-supersolvable  
reflection arrangement of exceptional type there are no 
$4$-dimensional supersolvable restrictions.

\begin{lemma}
\label{lem:dim4s}
Let $\CA = \CA(W)$ be an irreducible, non-supersolvable  
reflection arrangement of exceptional type.
Let $X \in L(\CA)$ with $\dim X = 4$.
Then $\CA^X$ is not supersolvable.
\end{lemma}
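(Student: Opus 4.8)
The plan is to reduce the $4$-dimensional case to the $3$-dimensional classification in Lemma \ref{lem:dim3} by a counting argument, exactly in the spirit of Lemma \ref{lem:show4nonmodular1}. Suppose $\CA^X$ with $\dim X = 4$ were supersolvable. Then by Lemma \ref{lem:super-rank2} applied to a modular flag, $L(\CA^X)$ would contain a modular element $Z$ of dimension $1$, and moreover $\CA^X$ would be free, so by Theorem \ref{thm:superfree} its exponents would be $\{1,b_1,b_2,b_3\}$ with the $b_i$ realized as $|(\CA^X)_{X_i}\setminus(\CA^X)_{X_{i-1}}|$ along the flag. The key structural input is that $L(\CA^X)$ is an interval in $L(\CA)$ (Lemma \ref{lem:interval}), hence every hyperplane restriction $(\CA^X)^H$ for $H\in\CA^X$ is again a restriction of the ambient reflection arrangement $\CA(W)$ and must, by Corollary \ref{cor:stanley}, be supersolvable whenever $\CA^X$ is. So the first step is: for each exceptional $W$ that is not supersolvable, list the $4$-dimensional restrictions $\CA^X$ (using the orbit tables of \cite[App.]{orliksolomon:unitaryreflectiongroups}, cf.\ \cite[App.\ C]{orlikterao:arrangements}), and for each such $\CB = \CA^X$ identify all its $3$-dimensional restrictions $\CB^H$.

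The second step is the decisive one. If $\CB = \CA^X$ is supersolvable of dimension $4$, then by Corollary \ref{cor:stanley} \emph{every} $3$-dimensional restriction $\CB^H$ is supersolvable, so by Lemma \ref{lem:dim3} each $\CB^H$ must be one of $(E_6,A_3)$, $(E_7,D_4)$, $(E_7,A_2^2)$, or $(E_8,A_5)$, up to the lattice isomorphisms in Lemma \ref{lem:isoms}. Inspecting the $4$-dimensional restrictions of the non-supersolvable exceptional groups, I expect to find that none of them has \emph{all} its hyperplane restrictions supersolvable: in every case at least one $3$-dimensional restriction $\CB^H$ appears in the ``false'' part of Table \ref{table:rank3restrictions}. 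If that holds, the proof is immediate: such a $\CB^H$ is not supersolvable, it is an interval in $L(\CB)$, so $\CB$ is not supersolvable by Corollary \ref{cor:stanley}, contradiction.

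As a fallback for any recalcitrant case where all $3$-dimensional restrictions happen to be supersolvable, one applies Lemma \ref{lem:show4nonmodular1} directly: if $\CB$ is a free, irreducible $4$-arrangement with, up to lattice isomorphism, a unique supersolvable restriction $\CB':=\CB^H$ with exponents $1 < b_1 \le b_2$, and the number $c$ of hyperplanes $H'$ with $L(\CB^{H'})\cong L(\CB')$ satisfies $c \le b_1+1$, then $L(\CB)$ has no modular line and hence $\CB$ is not supersolvable by Lemma \ref{lem:super-rank2}. The exponents of the relevant $\CB$ are read off from the freeness data for restrictions of reflection arrangements (via Theorem \ref{thm:indfree} and the standard tables), and the counts $c$ and the restriction types are obtained from the orbit tables or, where needed, from an explicit computer calculation as in the proof of Lemma \ref{lem:dim3}.

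The main obstacle will be purely bookkeeping: one must correctly enumerate, for each non-supersolvable exceptional $W$ of rank at least $5$ (and $F_4, H_4$ if relevant) the $4$-dimensional $W$-orbits on $L(\CA)$ and the restriction types of each resulting $\CB$, keeping track of the lattice isomorphisms of Lemma \ref{lem:isoms} so as not to miscount $c$. There is no conceptual difficulty beyond what is already in Lemmas \ref{lem:show4nonmodular1} and \ref{lem:dim3}; the work is to verify, case by case, that the numerical hypothesis of Corollary \ref{cor:stanley} (some $3$-restriction is non-supersolvable) or of Lemma \ref{lem:show4nonmodular1} (the count $c$ is too small) is met for every $4$-dimensional restriction of an exceptional non-supersolvable reflection arrangement.
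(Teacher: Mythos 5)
Your proposal follows essentially the same route as the paper: for every $4$-dimensional restriction of a non-supersolvable exceptional $W$ (the cases other than $E_6$, $E_7$, $E_8$ being immediate since all their $3$-dimensional restrictions are already non-supersolvable by Lemma \ref{lem:dim3}), one exhibits a hyperplane restriction lying in the ``false'' part of Table \ref{table:rank3restrictions} and concludes by Corollary \ref{cor:stanley}. The case-by-case check confirms your expectation, so the fallback via Lemma \ref{lem:show4nonmodular1} is never needed here (the paper only uses that counting lemma later, for the modular-element statement in Corollary \ref{cor:4dim1mod}).
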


\begin{proof}
Thanks to Lemma \ref{lem:dim3}, for every $W$ as in the statement, other than 
$W$ of type $E_6$, $E_7$ or $E_8$, already 
every $3$-dimensional restriction is not supersolvable.
It thus follows from Corollary \ref{cor:stanley} that 
in these cases 
no higher-dimensional 
restriction is supersolvable either.

So we are left to check the following 
restrictions $(E_6,A_1^2)$, $(E_6,A_2)$, $(E_7,(A_1^3)')$,
$(E_7,(A_1^3)'')$, $(E_7,A_1A_2)$,$(E_7,A_3)$, $(E_8,A_1^4)$, $(E_8,A_1^2A_2)$,
$(E_8,A_2^2)$, $(E_8,A_1A_3)$, $(E_8,A_4)$, $(E_8,D_4)$.
Again by Corollary \ref{cor:stanley}, it suffices  
to find a restriction to a hyperplane in these cases which is 
not supersolvable; in the following table, we exhibit 
a suitable restriction in each instance: 
\begin{center}
\begin{tabular}{l|l}  
  $\CA^X$ & non-supersolvable restriction \\
\hline
$(E_6,A_1^2)$     &$(E_6,A_1A_2)$\\
$(E_6,A_2)$       &$(E_6,A_1A_2)$\\
$(E_7,(A_1^3)')$  &$(E_7,A_1^4)$ \\
$(E_7,(A_1^3)'')$ &$(E_7,A_1^4)$\\
$(E_7,A_1A_2)$    &$(E_7,A_4)$\\
$(E_7,A_3)$       &$(E_7,A_4)$\\
$(E_8,A_1^4)$     &$(E_8,A_1D_4)$\\
$(E_8,A_1^2A_2)$   &$(E_8,A_1A_4)$\\
$(E_8,A_2^2)$     &$(E_8,A_2A_3)$ \\
$(E_8,A_1A_3)$    &$(E_8,A_2A_3)$\\
$(E_8,A_4)$       &$(E_8,A_1A_4)$\\
$(E_8,D_4)$       &$(E_8,A_1D_4)$\\
\end{tabular}
\end{center}
This is readily extracted from the information in the tables of
\cite[App.]{orliksolomon:unitaryreflectiongroups} (\cite[App.\ C]{orlikterao:arrangements}).
\end{proof}

\begin{corollary}
\label{cor:dimge4}
Let $\CA = \CA(W)$ be an irreducible, non-supersolvable  
reflection arrangement of exceptional type.
Let $X \in L(\CA)$ with $\dim X \ge 4$.
Then $\CA^X$ is not supersolvable.
\end{corollary}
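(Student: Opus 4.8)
The plan is to deduce Corollary \ref{cor:dimge4} from Lemma \ref{lem:dim4s} by a straightforward induction on $\dim X$, using the interval structure of the intersection lattice under restriction. The key mechanism is Corollary \ref{cor:stanley}: if $\CA^X$ is supersolvable and $Y \in L(\CA^X)$, then the interval $[X, Y]$ in $L(\CA)$ is supersolvable, and by Lemma \ref{lem:interval} this interval is exactly $L(\CA^Y)$ for $Y \ge X$ in the lattice. Concretely, if $\CA^X$ were supersolvable with $\dim X = d \ge 4$, then for any $Y \in L(\CA)$ with $X < Y$ and $\dim Y = 4$ (such a $Y$ exists because one can intersect $X$ with an appropriate hyperplane of $\CA$ not containing $X$, repeatedly, until the dimension drops to $4$), the restriction $\CA^Y \cong (\CA^X)^{(Y \cap \text{ambient})}$ would be supersolvable as an interval in the supersolvable lattice $L(\CA^X)$. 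Since $W$ is of exceptional type and $\CA$ is irreducible and non-supersolvable, $\CA^Y$ falls under Lemma \ref{lem:dim4s}, which says no such $4$-dimensional restriction is supersolvable — a contradiction.

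More carefully, here is the order of steps I would carry out. First I would note that the claim holds for $\dim X = 4$ directly by Lemma \ref{lem:dim4s}, establishing the base case. Next, for the inductive step, suppose $\dim X = d \ge 5$ and, for contradiction, that $\CA^X$ is supersolvable. Pick any hyperplane $H \in \CA \setminus \CA_X$; then $X \cap H \in L(\CA)$ has dimension $d - 1$, and $X < X \cap H$ in $L(\CA)$. By Lemma \ref{lem:interval}, $L(\CA^{X \cap H})$ is an interval in $L(\CA^X)$, and by Corollary \ref{cor:stanley} every interval in a supersolvable arrangement's lattice is supersolvable; hence $\CA^{X \cap H}$ is supersolvable. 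Since $\dim(X \cap H) = d - 1 \ge 4$, the inductive hypothesis (or, when $d - 1 = 4$, Lemma \ref{lem:dim4s}) yields the contradiction. This completes the induction and proves that no restriction of dimension at least $4$ is supersolvable.

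The only point requiring a small amount of care is verifying that a hyperplane $H \in \CA \setminus \CA_X$ exists, i.e.\ that $\CA_X \ne \CA$ whenever $\dim X \ge 4$; but this is immediate since $X \supsetneq T = \bigcap_{H \in \CA} H$ as soon as $\dim X$ exceeds the dimension of the center, and for an irreducible (hence essential) reflection arrangement of rank $\ge 5$ — which is the only situation where the inductive step with $d \ge 5$ occurs — the center is $\{0\}$, so certainly $\dim X \ge 4 > 0$ forces $X \notin \bigcap \CA$, giving such an $H$. There is no real obstacle here: the entire content of Corollary \ref{cor:dimge4} has already been done in Lemmas \ref{lem:dim3} and \ref{lem:dim4s}, and this corollary is merely the bookkeeping that propagates non-supersolvability upward through the lattice via Corollary \ref{cor:stanley}. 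Indeed one can even shortcut the induction: any $\CA^X$ with $\dim X \ge 4$ restricts (down intervals) to some $\CA^Y$ with $\dim Y = 4$, and non-supersolvability of the latter is inherited by the former by Corollary \ref{cor:stanley}.

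\begin{proof}
We argue by descending induction on $\dim X$, the case $\dim X = 4$ being Lemma \ref{lem:dim4s}. Suppose $\dim X \ge 5$ and, for contradiction, that $\CA^X$ is supersolvable. Since $\CA$ is irreducible, hence essential, its center is $\{0\}$, so $\CA_X \ne \CA$; choose $H \in \CA \setminus \CA_X$. Then $Y := X \cap H \in L(\CA)$ satisfies $X < Y$ and $\dim Y = \dim X - 1 \ge 4$. By Lemma \ref{lem:interval}, $L(\CA^Y)$ is an interval in $L(\CA^X)$, so $\CA^Y$ is supersolvable by Corollary \ref{cor:stanley}. This contradicts the inductive hypothesis (or Lemma \ref{lem:dim4s} if $\dim Y = 4$).
\end{proof}
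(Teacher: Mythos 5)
Your proof is correct and follows essentially the same route as the paper: the paper's own argument is exactly "Lemma \ref{lem:dim4s} plus Corollary \ref{cor:stanley}" (via Lemma \ref{lem:interval}), and your descending induction merely makes explicit the step of passing to a $4$-dimensional restriction inside $L(\CA^X)$ that the paper leaves implicit. The existence of $H \in \CA \setminus \CA_X$ and the essentiality remark are fine for these irreducible reflection arrangements, so no gaps.
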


\begin{proof}
Thanks to Lemma \ref{lem:dim4s},  
every restriction of dimension $4$ is non-supersolvable.
The result then follows from Corollary \ref{cor:stanley}.
\end{proof}

\begin{proof}[Proof of Theorem \ref{super-restriction}]
Thanks to Corollary \ref{cor:stanley},
every interval of a supersolvable lattice is itself supersolvable.
Consequently, if $\CA$ is supersolvable, then so is any restriction $\CA^X$, 
by Lemma \ref{lem:interval}.
If $\CA^X$ is as in (ii) or (iii), then 
$\CA^X$ is supersolvable, thanks to 
Lemmas \ref{lem:intermediates} and \ref{lem:dim3} and 
Corollary \ref{cor:dimge4}.
This gives the reverse implication.

If $\CA^X$ is supersolvable but  $\CA$ is not,
then $\CA^X$  is as in (ii) or (iii), again 
by Lemmas \ref{lem:intermediates} and \ref{lem:dim3} and 
Corollary \ref{cor:dimge4}.
This gives the forward implication.
\end{proof}

\section{Proof of Theorem \ref{super-restriction-rank2}}
\label{sect:proof2}

We start by determining all modular elements in the lattice of $\CA^k_\ell(r)$.  We introduce the following notation:
Let $\Bool_k$ be the Boolean arrangement in $\BBC^k$ and define $\CB^k_\ell := \Bool_k \times \Phi_{\ell - k}$ to be the
$\ell$-arrangement consisting of the first $k$ coordinate hyperplanes in $\BBC^\ell$, i.e. $\CB^k_\ell = (\{H_1, \ldots, H_k\},
\BBC^\ell)$. Then clearly $\CB^k_\ell \subseteq \CA^m_\ell(r)$ for all $m \geq k$.

\begin{lemma}
\label{lem:modularelements}
Let $\CA = \CA^k_\ell(r)$ with $\ell \geq 3$, $0 \leq k \leq \ell$ and $r \geq 2$. 
Then all elements of $L(\CB^k_\ell) \subseteq
L(\CA)$ are modular in $L(\CA)$.

\begin{proof}
Let $X \in L(\CB^k_\ell)$ with $1 \leq \dim X < \ell - 1$ (i.e.\ $X$ is not trivially modular) and $I = \{1, \ldots ,\ell\}$.
Then there is a subset $\Delta_X \subseteq \{1, \ldots, k\}$ such that $X = \bigcap\limits_{i \in \Delta_X}H_i$.

Let $Y \in L(\CA)$ with $\dim Y = p$. Then using the construction in \cite[\S 2]{orliksolomon:unitaryreflectiongroups}
(cf. \cite[\S 6.4]{orlikterao:arrangements}), we can find a subset $\Delta_Y \subseteq I$ and a partition $\Lambda_Y =
(\Lambda_1, \ldots, \Lambda_p)$ of $I\setminus\Delta_Y$ together with $r\th$ roots of unity $\theta_{ij}$ such that
$$Y = \bigcap\limits_{i \in \Delta_Y}H_i \cap \bigcap\limits_{i, j \in \Lambda_1}H_{ij}(\theta_{ij}) \cap \cdots
\cap \bigcap\limits_{i, j \in \Lambda_p}H_{ij}(\theta_{ij}),$$
where we agree that $H_{ii}(\theta) = \BBC^\ell$ for all $i$ and all $\theta$. 
Consequently, in the following we 
occasionally omit the intersections corresponding to $\Lambda_i$ with $\vert\Lambda_i\vert = 1$.

Now let $\tilde{\Lambda}_i := \Lambda_i \cap \Delta_X$ for $i = 1, \ldots, p$. If $\tilde{\Lambda}_i = \emptyset$ for all $i$,
then $Y \subseteq X$ and hence $X + Y = X \in L(\CA)$.
Thus we may assume that $\{i \mid \tilde{\Lambda}_i \neq \emptyset\} = \{1, \ldots, q\}$ for some $1 \leq q \leq p$. Then we get
$$X + Y = \bigcap\limits_{i \in \Delta_X \cap \Delta_Y}H_i \cap \bigcap\limits_{i, j \in \tilde{\Lambda}_1}
H_{ij}(\theta_{ij}) \cap \cdots \cap \bigcap\limits_{i, j \in \tilde{\Lambda}_q}H_{ij}(\theta_{ij}) \in L(\CA),$$
and we are done.
\end{proof}
\end{lemma}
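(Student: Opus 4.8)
The plan is to show directly that every $X \in L(\CB^k_\ell)$ satisfies $X + Y \in L(\CA)$ for an arbitrary $Y \in L(\CA)$, by exhibiting $X+Y$ explicitly as an intersection of hyperplanes of $\CA = \CA^k_\ell(r)$. First I would dispose of the trivial cases: the elements $X$ of $L(\CB^k_\ell)$ with $\dim X = \ell$ (that is $X = \BBC^\ell$) or $\dim X = \ell - 1$ are modular in any arrangement, so we may assume $1 \le \dim X < \ell - 1$. Any such $X$ is cut out by a subset of the first $k$ coordinate hyperplanes, so there is $\Delta_X \subseteq \{1, \dots, k\}$ with $X = \bigcap_{i \in \Delta_X} H_i$.

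Next I would put an arbitrary $Y \in L(\CA)$ into the normal form coming from the combinatorial description of $L(\CA^k_\ell(r))$ in \cite[\S 2]{orliksolomon:unitaryreflectiongroups} (cf.\ \cite[\S 6.4]{orlikterao:arrangements}): there is a subset $\Delta_Y \subseteq I := \{1,\dots,\ell\}$ and a partition $\Lambda_Y = (\Lambda_1, \dots, \Lambda_p)$ of $I \setminus \Delta_Y$, together with $r$th roots of unity $\theta_{ij}$, such that $Y$ is the intersection of the $H_i$ for $i \in \Delta_Y$ with the $H_{ij}(\theta_{ij})$ for $i,j$ in a common block $\Lambda_m$; using the convention $H_{ii}(\theta) = \BBC^\ell$ one may ignore singleton blocks. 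The key computation is then to describe $X + Y$, i.e.\ the smallest element of $L(V)$ containing both $X$ and $Y$, equivalently the intersection of all hyperplanes of $\CA$ containing $X \cup Y$. Setting $\tilde\Lambda_m := \Lambda_m \cap \Delta_X$, if all $\tilde\Lambda_m$ are empty then $Y \subseteq X$ and $X + Y = X \in L(\CA)$; otherwise, reindexing so that $\tilde\Lambda_m \ne \emptyset$ exactly for $m = 1, \dots, q$, one checks that
$$
X + Y = \bigcap_{i \in \Delta_X \cap \Delta_Y} H_i \;\cap\; \bigcap_{i,j \in \tilde\Lambda_1} H_{ij}(\theta_{ij}) \;\cap \cdots \cap\; \bigcap_{i,j \in \tilde\Lambda_q} H_{ij}(\theta_{ij}),
$$
which is manifestly an element of $L(\CA^k_\ell(r))$ since all indices occurring in coordinate hyperplanes lie in $\Delta_X \subseteq \{1,\dots,k\}$.

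The step I expect to require the most care is verifying that the displayed subspace really equals $X + Y$ rather than merely containing it: one inclusion is clear, since each listed hyperplane contains both $X$ and $Y$; for the reverse inclusion one must argue that a coordinate functional $x_i$ or a functional $x_i - \zeta x_j$ vanishing on both $X$ and $Y$ forces, respectively, $i \in \Delta_X \cap \Delta_Y$, or $i,j$ to lie in a common block $\tilde\Lambda_m$ with $\zeta = \theta_{ij}$. This is a dimension-count / linear-independence argument on the defining linear forms, exploiting that the forms $x_i$ ($i \in \Delta_X$) together with a transversal of each $\Lambda_m$-block's relations span $X^\perp + Y^\perp$ and that their intersection pattern is governed purely by the set $\Delta_X$ and the partition $\Lambda_Y$. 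Once this is in hand, the lemma follows immediately, and I would remark that it in particular recovers the fact (used in Lemma \ref{lem:intermediates}) that $\CA^k_\ell(r)$ with $\ell - 1 \le k \le \ell$ is supersolvable, since then $L(\CB^k_\ell)$ already contains a full chain of modular elements.
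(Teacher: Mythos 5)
Your proposal is correct and follows essentially the same route as the paper's proof: the same normal form for $Y$ via $\Delta_Y$ and the partition $\Lambda_Y$, the same sets $\tilde\Lambda_m = \Lambda_m \cap \Delta_X$, the same case split ($Y \subseteq X$ versus some $\tilde\Lambda_m \neq \emptyset$), and the identical closed formula for $X+Y$. The only difference is that you make explicit the verification (via which hyperplanes contain both $X$ and $Y$, plus a dimension count) that the paper leaves as a direct computation.
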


\begin{lemma}
\label{lem:startinduction}
Let $\CA = \CA^k_3(r)$ with $1 \leq k \leq 3$ and $r \geq 2$ and let $X \in L(\CA)$ of dimension $1$.
Then  $X$ is
modular in $L(\CA)$ if and only if $X = H_i \cap H_j$ for some $1 \leq i < j \leq k$.

\begin{proof}
By Lemma \ref{lem:modularelements} every such element is modular, so in the following we assume that $X$ is not of this form. If
$X = H_i \cap H$ for some $H \in \CA$ and $k < i \leq 3$, then a simple argument shows that $X$ is not modular. Thus we may
assume that $X$ is either $X^\eta = H_1 \cap H_{23}(\eta)$ or $X_{\theta_2 \theta_3} = H_{12}(\theta_2) \cap H_{13}(\theta_3)$.
A simple calculation shows that $X^\eta + X_{\theta_2 \theta_3} = \ker((\theta_2\eta + \theta_3)x_1 - \eta x_2 - x_3)$ which is
not contained in $L(\CA)$ whenever $\theta_2\eta + \theta_3 \neq 0$ and hence proves the lemma.
\end{proof}
\end{lemma}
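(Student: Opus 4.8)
The plan is to prove Lemma~\ref{lem:startinduction} by combining the positive direction, which is already supplied by Lemma~\ref{lem:modularelements}, with an explicit analysis of the remaining candidates for a modular line $X$ in $L(\CA^k_3(r))$. First I would fix notation: a rank-$2$ element $X \in L(\CA)$ is the intersection of the hyperplanes of $\CA$ containing it, so up to the obvious symmetries (permuting coordinates and rescaling) there are only three shapes a line can have, namely $H_i \cap H_j$ with $1 \le i < j \le k$ (covered by Lemma~\ref{lem:modularelements}), $H_i \cap H$ for some $H \in \CA$ and some coordinate index $k < i \le 3$, and finally the two ``mixed/diagonal'' shapes $X^\eta = H_1 \cap H_{23}(\eta)$ and $X_{\theta_2\theta_3} = H_{12}(\theta_2) \cap H_{13}(\theta_3)$. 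So the content of the lemma is that each line of the latter two shapes fails to be modular, i.e.\ admits some $Y \in L(\CA)$ with $X + Y \notin L(\CA)$.

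The key computational step is to produce, for each such $X$, a witness $Y$. For $X$ of the form $H_i \cap H$ with $k < i \le 3$ one notes that $H_i$ is \emph{not} in $\CA_X'$ for a suitable second diagonal hyperplane $H'$ not through $X$, and the sum $X + H'$ is a hyperplane whose defining linear form does not factor through any member of $\CA^k_3(r)$ — concretely, because the coordinate $x_i$ appears in $H'$ but not in $H$, one gets a linear form with a coefficient pattern that is not of the allowed types $x_a$ or $x_a - \zeta x_b$. For the genuinely mixed case the cleanest witness is to pair the two diagonal lines against each other: a direct calculation gives
\[
X^\eta + X_{\theta_2\theta_3} = \ker\bigl((\theta_2\eta + \theta_3)x_1 - \eta x_2 - x_3\bigr),
\]
and provided $\theta_2\eta + \theta_3 \ne 0$ this is a genuine hyperplane whose equation involves all three coordinates with three nonzero coefficients, hence is not a member of $\CA^k_3(r)$ and lies in $L(\CA)$ only if it equals $\ker(x_a - \zeta x_b)$ for some $a,b$ — impossible. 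Since for any given $X^\eta$ (resp.\ $X_{\theta_2\theta_3}$) one can choose the partner $\theta_2,\theta_3$ (resp.\ $\eta$) so that $\theta_2\eta+\theta_3 \ne 0$, both mixed shapes are non-modular, which finishes the argument.

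The main obstacle, such as it is, is bookkeeping rather than depth: one has to be careful that the chosen witness $Y$ actually lies in $L(\CA^k_3(r))$ (so one must only use hyperplanes $H_i$ with $i \le k$ together with arbitrary diagonal hyperplanes $H_{ij}(\zeta)$), and one must verify that the offending sum $X+Y$ is genuinely \emph{not} in the lattice — i.e.\ that no relabelling or rescaling turns its defining form into one of $x_a$ or $x_a - \zeta x_b$. Both points are handled by inspecting the coefficient vector of the linear form: a form in $L(\CA^k_3(r))^\perp$ of rank one either is supported on a single coordinate (an $H_a$, allowed only for $a \le k$) or on exactly two coordinates with coefficients $1$ and $-\zeta$; a three-term form, or a two-term form involving a coordinate index $> k$ with the wrong companion, is excluded. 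With these observations the case analysis is short, and the proof reduces to the one displayed identity above together with the trivial remark that every element of $L(\CB^k_3)$ of dimension $1$ is exactly an $H_i \cap H_j$ with $i,j \le k$.
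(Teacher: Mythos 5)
Your main line of argument is exactly the paper's: the modular direction comes from Lemma \ref{lem:modularelements}, the remaining lines are reduced (up to permuting the first $k$ coordinates) to the shapes $X^\eta = H_1 \cap H_{23}(\eta)$ and $X_{\theta_2\theta_3} = H_{12}(\theta_2)\cap H_{13}(\theta_3)$, and the decisive step is the same identity $X^\eta + X_{\theta_2\theta_3} = \ker\bigl((\theta_2\eta+\theta_3)x_1 - \eta x_2 - x_3\bigr)$ together with the observation that for a given $X^\eta$ (resp.\ $X_{\theta_2\theta_3}$) the partner can be chosen with $\theta_2\eta+\theta_3 \neq 0$, so the resulting plane has a three-term defining form and cannot lie in $\CA^k_3(r)$. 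That part is correct and coincides with the paper's computation.

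There is, however, a step that fails: your treatment of the case $X = H_i \cap H$ with $k < i \le 3$. You take as witness a hyperplane $H'$ not through $X$ and claim that $X + H'$ is a hyperplane whose form is not of an allowed type. For a line $X$ in a central $3$-arrangement this is impossible on dimension grounds: if $X \not\subseteq H'$ then $X + H' = V \in L(\CA)$, and if $X \subseteq H'$ then $X + H' = H' \in \CA$; so no hyperplane can ever witness non-modularity of a line. The witness must be another \emph{line} $Y \in L(\CA)$ such that the plane $X+Y$ is not a member of $\CA$ (a codimension-one element of $L(\CA)$ is necessarily a hyperplane of $\CA$). The paper dismisses this case as ``a simple argument''; a correct one is: such an $X$ is contained in $\ker(x_i)$ with $i > k$, so pick a coordinate axis $Y \subseteq \ker(x_i)$ with $Y \neq X$, say $Y = \{x_i = x_j = 0\}$ for a suitable $j \neq i$. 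Then $Y \in L(\CA)$, since it is the intersection of the $r \ge 2$ hyperplanes $\ker(x_i - \zeta x_j)$, $\zeta^r = 1$, all of which lie in $\CA^k_3(r)$; and $X + Y = \ker(x_i)$, which is not in $\CA$ because $i > k$, so $X$ is not modular. With this repair (and the coefficient-support check you already use for the three-term form), your proof agrees with the paper's.
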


\begin{proposition}
\label{prop:boolsub}
Let $\ell \geq 3$, $0 \leq k \leq \ell$ and $r \geq 2$ and let $\CA = \CA^k_\ell(r) \not\cong \CA^0_3(2)$. Then $X \in L(\CA)$
is modular in $L(\CA)$ if and only if $X \in L(\CB^k_\ell)$ or $X \in \CA \cup \{0, \BBC^\ell\}$.

\begin{proof}
Let $X \in L(\CA)$ with $1 \leq \dim X = p \leq \ell - 2$, that is $X$ is none of the obviously modular elements, cf.\ \cite[Ex.\ 2.28]{orlikterao:arrangements}.
Using the construction in \cite[\S 2]{orliksolomon:unitaryreflectiongroups} (\cite[\S 6.4]{orlikterao:arrangements}),
we can find a subset $\Delta_X \subseteq I$ and a
partition $\Lambda_X = (\Lambda_1, \ldots, \Lambda_p)$ of $I\setminus\Delta_X$ together with $r\th$ roots of unity $\theta_j$
such that
$$X = \bigcap\limits_{j \in \Delta_X}H_j \cap \bigcap\limits_{j \in \Lambda_1\setminus\{m_1\}}H_{m_1j}(\theta_j)\cap
\cdots \cap \bigcap\limits_{j \in \Lambda_p\setminus\{m_p\}}H_{m_pj}(\theta_j),$$
for $m_i := \min\Lambda_i$ ($1 \leq i \leq p$). We may assume that $\vert\Lambda_1\vert \geq \ldots \geq \vert\Lambda_p\vert$.
It is easy to see that $X$ is not modular whenever $\Delta_X \not\subseteq \{1, \ldots, k\}$, so we assume $\Delta_X \subseteq
\{1, \ldots, k\}$. Then we have $\vert\Lambda_1\vert = \ldots = \vert\Lambda_p\vert = 1$ if and only if $X \in L(\CB^k_\ell)$.
By Lemma \ref{lem:modularelements}, the elements of $L(\CB^k_\ell)$ are modular. Thus we assume $\vert\Lambda_1\vert > 1$ and
show that $X$ is not modular. For $H \in \CA$, we know by Proposition \ref{prop:intermediate} that $\CA^H \cong
\CA^{k'}_{\ell - 1}(r)$ with $0 < k' \leq \ell - 1$. Using Lemmas \ref{lem:modularrestrictions},
\ref{lem:startinduction} and induction on $\ell$, it suffices to show that for $\ell \geq 4$ we can choose $H \in \CA$ such that
$X \cap H$ is not modular in $\CA^{k'}_{\ell - 1}(r)$. Now we discuss two cases:

\paragraph{\textbf{Case 1:}} The rank of $X$ is at least $3$. Here we have two subcases:

\subparagraph{\textbf{Case 1a:}} 
If $\Delta_X \neq \emptyset$, then choose $i \in \Delta_X$ and set $H := H_i$. Considering Table
\ref{restriction-table}, we see
that in $L(\CA^H)$ the element $X \cap H = X$ is of the form 
$$X = \bigcap\limits_{j \in \Delta_X\setminus\{i\}}H_j \cap \bigcap\limits_{j \in \Lambda_1\setminus\{m_1\}}H_{m_1j}(\theta_j)
\cap \cdots \cap \bigcap\limits_{j \in \Lambda_p\setminus\{m_p\}}H_{m_pj}(\theta_j).$$
We still have $\vert\Lambda_1\vert > 1$ and hence $X$ is not contained in the lattice of the Boolean subarrangement
$\CB^{k'}_{\ell - 1}$ of $\CA^H$. As $X$ is clearly neither zero nor a hyperplane in $H$ (nor $H$ itself), it is not
modular in $L(\CA^H)$ by induction hypothesis.

\subparagraph{\textbf{Case 1b:}} If $\Delta_X = \emptyset$, then we may assume $\vert\Lambda_1\vert \geq 3$ or
$\vert\Lambda_1\vert = \vert\Lambda_2\vert = 2$,
because $X$ is not contained in $\CA$. In both cases we set $H := H_{m_1i}(\theta_i)$ for some $i \in \Lambda_1
\setminus\{m_1\}$. Now using Table \ref{restriction-table}, we see that $X = X \cap H \in L(\CA^H)$ is of the form
$$X = \bigcap\limits_{j \in \Lambda_1\setminus\{i, m_1\}}H_{ij}(\theta_i\inverse\theta_j) \cap \cdots \cap \bigcap\limits_{j
\in \Lambda_p\setminus\{m_p\}}H_{m_pj}(\theta_j),$$
where in the case $\vert\Lambda_1\vert = 2$ the first intersection is empty.
Clearly, we have $\vert\Lambda_1\setminus\{i\}\vert > 1$ or $\vert\Lambda_2\vert > 1$ and hence $X$ is not contained in the
lattice of the
Boolean subarrangement $\CB^{k'}_{\ell - 1}$ of $\CA^H$. As $X$ is clearly neither zero nor a hyperplane in $H$ (nor $H$ itself),
it is not modular in $L(\CA^H)$ by induction hypothesis.

\paragraph{\textbf{Case 2:}} Now let $X$ be of rank $2$. We may assume that $X$ is either $X_a = H_1 \cap H_{23}(\theta)$, $X_b
= H_{12}(\theta_2) \cap
H_{13}(\theta_3)$ or $X_c = H_{12}(\theta_2) \cap H_{34}(\theta_4)$.
We set $H := H_{24}(\eta) \in \CA$ and set $\tilde{X}_\gamma := X_\gamma \cap H$ for $\gamma \in \{a, b, c\}$.
Clearly, $\tilde{X}_\gamma \in L(\CA^H)$ ($\gamma = a, b, c$) is none of the trivially modular elements.
Table \ref{restriction-table} shows that in $L(\CA^H)$ we get the following cases
$$\tilde{X}_a = H_1 \cap H_{34}(\theta\inverse\eta), \quad \tilde{X}_b = H_{14}(\theta_2\eta) \cap H_{13}(\theta_3)
\quad\text{and}\quad \tilde{X}_c = H_{14}(\theta_2\eta) \cap H_{34}(\theta_4).$$
Thus $\tilde{X}_\gamma$ ($\gamma = a, b, c$) is not contained in the lattice of the Boolean subarrangement $\CB^{k'}_{\ell - 1}$
of $\CA^H$ and hence it is not modular by induction hypothesis.
\end{proof}
\end{proposition}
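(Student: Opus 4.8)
The plan is to prove both implications of Proposition \ref{prop:boolsub}, with essentially all the work in the ``only if'' direction. For ``if'': the elements of $L(\CB^k_\ell)$ are modular in $L(\CA)$ by Lemma \ref{lem:modularelements}, while $0$, $\BBC^\ell$ and the hyperplanes of $\CA$ are modular in any central essential arrangement (the obviously modular elements, cf.\ \cite[Ex.\ 2.28]{orlikterao:arrangements}). For ``only if'' I would induct on $\ell$. In the base case $\ell = 3$ the only non-obvious modular elements have dimension $1$: for $1 \le k \le 3$, Lemma \ref{lem:startinduction} identifies these with the $H_i \cap H_j$ for $1 \le i < j \le k$, i.e.\ with the dimension-$1$ members of $L(\CB^k_3)$, which together with the obvious ones is the claim; and for $k = 0$, $r \ge 3$, the arrangement $\CA^0_3(r) = \CA(G(r,r,3))$ is not supersolvable by Theorem \ref{super}, so it has no modular element of dimension $1$ by Lemma \ref{lem:super-rank2}, and since $L(\CB^0_3) = \{\BBC^3\}$ the claim again holds --- the case $k = 0$, $r = 2$ being excluded by hypothesis.

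For the inductive step, let $\ell \ge 4$ and let $X \in L(\CA)$ be modular with $1 \le \dim X \le \ell - 2$, so that $X$ is none of the obvious modular elements. Write $X$ in the Orlik--Solomon normal form of \cite[\S 2]{orliksolomon:unitaryreflectiongroups}, with $\Delta_X \subseteq I = \{1, \dots, \ell\}$ and a partition $\Lambda_X = (\Lambda_1, \dots, \Lambda_p)$ of $I \setminus \Delta_X$, where $p = \dim X$ and $|\Lambda_1| \ge \dots \ge |\Lambda_p|$. A short direct computation, exhibiting a $Y \in L(\CA)$ with $X + Y \notin L(\CA)$, shows that $X$ is not modular once $\Delta_X \not\subseteq \{1, \dots, k\}$; so I may assume $\Delta_X \subseteq \{1, \dots, k\}$, and then $X \in L(\CB^k_\ell)$ precisely when every block $\Lambda_i$ is a singleton, leaving me to rule out $|\Lambda_1| \ge 2$. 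The engine is Lemma \ref{lem:modularrestrictions}: for every $H \in \CA$ the intersection $X \cap H$ is modular in $L(\CA^H)$, and by Proposition \ref{prop:intermediate} one has $\CA^H \cong \CA^{k'}_{\ell - 1}(r)$ with $k' \ge 1$, so the excluded arrangement $\CA^0_3(2)$ never occurs among the $\CA^H$ and the induction hypothesis applies to each of them. It therefore suffices to produce, for any such $X$ with $|\Lambda_1| \ge 2$, a hyperplane $H \in \CA$ for which $X \cap H$ is a non-obvious modular element of $\CA^H$ \emph{not} lying in $L(\CB^{k'}_{\ell - 1})$: the induction hypothesis then declares $X \cap H$ non-modular, contradicting Lemma \ref{lem:modularrestrictions}.

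Choosing $H$ is a short case analysis driven by Table \ref{restriction-table}, according to the rank of $X$. If $X$ has rank at least $3$ and $\Delta_X \neq \emptyset$, pick $i \in \Delta_X \subseteq \{1, \dots, k\}$ and set $H = H_i$; then $X \subseteq H$, so $X \cap H = X$, which still carries the block $\Lambda_1$ of size $\ge 2$ (so it is not in $L(\CB^{k'}_{\ell - 1})$) and has rank strictly between $1$ and $\ell - 1$ in $\CA^H$ (so it is not obvious). If $X$ has rank at least $3$ and $\Delta_X = \emptyset$, then $\sum_i (|\Lambda_i| - 1) = \codim X \ge 3$ forces $|\Lambda_1| \ge 3$ or $|\Lambda_1| = |\Lambda_2| = 2$; set $H = H_{m_1 i}(\theta_i)$ with $m_1 = \min \Lambda_1$ and $i \in \Lambda_1 \setminus \{m_1\}$, and read off from Table \ref{restriction-table} that $X \cap H = X$ still has a block of size $> 1$ --- namely $\Lambda_1 \setminus \{m_1\}$, re-centred at $i$, if $|\Lambda_1| \ge 3$, or $\Lambda_2$ if $|\Lambda_1| = 2$ --- so again it is non-obvious and not Boolean. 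Finally, if $X$ has rank $2$, then after relabelling (and using $\Delta_X \subseteq \{1, \dots, k\}$) $X$ is one of $H_1 \cap H_{23}(\theta)$, $H_{12}(\theta_2) \cap H_{13}(\theta_3)$, or $H_{12}(\theta_2) \cap H_{34}(\theta_4)$; taking $H = H_{24}(\eta) \in \CA$, which needs only $\ell \ge 4$, Table \ref{restriction-table} gives $X \cap H$ equal to $H_1 \cap H_{34}(\theta\inverse \eta)$, $H_{14}(\theta_2 \eta) \cap H_{13}(\theta_3)$, or $H_{14}(\theta_2 \eta) \cap H_{34}(\theta_4)$ respectively --- each a rank-$2$ element involving a non-coordinate hyperplane, hence not in $L(\CB^{k'}_{\ell - 1})$, and not obvious since $\CA^H$ has rank $\ell - 1 \ge 3$. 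In each case we reach the desired contradiction, completing the induction.

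The part I expect to be the main obstacle is the bookkeeping inside the inductive step: one must trace the Orlik--Solomon normal form of $X$ through the restriction maps of Table \ref{restriction-table} while ensuring that $X \cap H$ never collapses to an obvious modular element of $\CA^H$ (a hyperplane, the centre, or the ambient space). This is exactly why the inductive step requires $\ell \ge 4$ and why rank-$2$ elements need separate handling, and it is also where one verifies that no $\CA^H$ occurring in the induction is isomorphic to the excluded arrangement $\CA^0_3(2)$. The remaining pieces --- the direct computation dismissing $\Delta_X \not\subseteq \{1, \dots, k\}$ and the three restriction formulas in the rank-$2$ case --- are routine, but need to be carried out with care.
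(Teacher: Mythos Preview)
Your proposal is correct and follows essentially the same approach as the paper's proof: the same Orlik--Solomon normal form, the same reduction via Lemma \ref{lem:modularrestrictions} and Proposition \ref{prop:intermediate} to an induction on $\ell$, and the identical case split (rank $\geq 3$ with $\Delta_X \neq \emptyset$ or $\Delta_X = \emptyset$; rank $2$ with the three normal forms and $H = H_{24}(\eta)$). Your treatment of the base case $\ell = 3$ is in fact slightly more complete than the paper's, since you explicitly dispose of $k = 0$, $r \geq 3$ via Theorem \ref{super} and Lemma \ref{lem:super-rank2}, whereas the paper's appeal to Lemma \ref{lem:startinduction} covers only $1 \leq k \leq 3$.
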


We obtain the following immediate consequence of Proposition \ref{prop:boolsub}.

\begin{corollary}
\label{cor:modelem1}
Let $\CA = \CA(G(r,p,\ell))$ and $X \in L(\CA)$. Then $\CA^X$ is supersolvable if and only if $L(\CA^X)$ contains a modular
element of dimension $1$.
\end{corollary}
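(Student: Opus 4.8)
The plan is to reduce the statement to the description of modular elements of the intermediate arrangements $\CA^m_j(r)$ obtained in Proposition \ref{prop:boolsub}, together with the classification of supersolvable $\CA^m_j(r)$ in Lemma \ref{lem:intermediates}. First I would record that $\CA = \CA(G(r,p,\ell))$ is itself an intermediate arrangement: $\CA = \CA^0_\ell(r)$ when $p = r$, and $\CA = \CA(G(r,1,\ell)) = \CA^\ell_\ell(r)$ otherwise (cf.\ \cite[\S 6.4]{orlikterao:arrangements}). Hence, invoking the description of the orbits of $W$ on $L(\CA)$ in \cite[Prop.~2.14]{orliksolomon:unitaryreflectiongroups} (cf.\ \cite[Prop.\ 6.84]{orlikterao:arrangements}), or simply iterating Proposition \ref{prop:intermediate} along a maximal chain in $L(\CA)$ from $V$ down to $X$, the restriction $\CA^X$ is isomorphic, as an arrangement, to $\CA^m_j(r)$ for some $0 \le m \le j$, where $j := \dim X$. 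An arrangement isomorphism preserves both the dimensions of lattice elements and the property of being modular, so it suffices to prove the equivalence for $\CA^m_j(r)$ in place of $\CA^X$. The case $r = 1$ is the classical one of the type-$A$ braid arrangement (then every restriction is again such a braid arrangement, which is supersolvable by Theorem \ref{super}, and the $1$-dimensional centre of each is a modular element), so I would henceforth assume $r \ge 2$; and if $j = \dim X \le 2$ both sides of the equivalence hold automatically, since an arrangement of rank at most $2$ is supersolvable and every hyperplane of a $2$-arrangement is a modular element of dimension $1$. Thus the only substantial case is $j \ge 3$.

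Assume $j \ge 3$. If $\CA^m_j(r) \cong \CA^0_3(2)$, then $\CA^m_j(r)$ is supersolvable by Lemma \ref{lem:intermediates} and, being a supersolvable $3$-arrangement, contains a modular element of dimension $1$ by Lemma \ref{lem:super-rank2}; so both conditions hold and the equivalence is trivially true. Otherwise $\CA^m_j(r) \not\cong \CA^0_3(2)$, and Proposition \ref{prop:boolsub} gives the modular elements of $L(\CA^m_j(r))$ exhaustively: the elements of $L(\CB^m_j)$, the hyperplanes of $\CA^m_j(r)$, the zero subspace, and the whole space $\BBC^j$. Since $j \ge 3$, the hyperplanes have dimension $j - 1 \ge 2$ and the last two have dimensions $0$ and $j \ge 3$, none equal to $1$; hence any modular element of dimension $1$ must lie in $L(\CB^m_j)$, i.e.\ be of the form $H_{i_1} \cap \dots \cap H_{i_s}$ with $\{i_1, \dots, i_s\} \subseteq \{1, \dots, m\}$, and such an element has dimension $j - s$. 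Dimension $1$ forces $s = j - 1$, which is possible exactly when $m \ge j - 1$; conversely, when $m \ge j - 1$ the element $H_1 \cap \dots \cap H_{j-1} \in L(\CB^m_j)$ has dimension $1$ and is modular by Lemma \ref{lem:modularelements}. Thus $L(\CA^m_j(r))$ has a modular element of dimension $1$ if and only if $j - 1 \le m \le j$.

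Finally, by Lemma \ref{lem:intermediates} (and using $\CA^m_j(r) \not\cong \CA^0_3(2)$), the inequality $j - 1 \le m \le j$ is exactly the condition for $\CA^m_j(r)$ to be supersolvable; combined with the trivial cases this yields the corollary. The only step that requires genuine attention --- and hence the ``main obstacle'', modest as it is --- is the need to treat $\CA^0_3(2)$ separately: it is supersolvable and does carry a modular element of dimension $1$, yet it is precisely the intermediate arrangement excluded from Proposition \ref{prop:boolsub}. Everything else reduces to a routine dimension count inside the explicit lattices $L(\CA^m_j(r))$.
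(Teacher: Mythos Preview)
Your proposal is correct and follows essentially the same route as the paper: reduce $\CA^X$ to an intermediate arrangement $\CA^m_j(r)$ via \cite[Props.~2.5, 2.14]{orliksolomon:unitaryreflectiongroups} (or iterated use of Proposition \ref{prop:intermediate}), then combine Proposition \ref{prop:boolsub} with Lemma \ref{lem:intermediates}. The paper compresses all of this into two sentences, whereas you spell out the dimension count in $L(\CB^m_j)$, the trivial low-rank cases, the $r=1$ braid case, and the exceptional $\CA^0_3(2)$ case explicitly; these elaborations are correct and helpful but do not constitute a different argument.
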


\begin{proof}
It follows from Lemma \ref{lem:intermediates} and Proposition \ref{prop:boolsub} that  $\CA^k_\ell(r)$ is supersolvable
if and only if there is a modular element of dimension $1$ in $L(\CA^k_\ell(r))$. Now the proofs of
\cite[Prop.~2.5]{orliksolomon:unitaryreflectiongroups} and \cite[Prop.~2.14]{orliksolomon:unitaryreflectiongroups}
(see also \cite[Prop.~6.77 and 6.84]{orlikterao:arrangements}) imply the assertion.
\end{proof}

We record another consequence of  Proposition \ref{prop:boolsub} which together with \cite[Prop.~2.14]{orliksolomon:unitaryreflectiongroups}
(\cite[Prop.~6.84]{orlikterao:arrangements}) gives an alternative proof of Theorem \ref{super-restriction}(ii).

\begin{corollary}

$\CA^k_\ell(r)$ is supersolvable if and only if $k \in \{\ell - 1, \ell\}$ or $(k, \ell, r) = (0, 3, 2)$.

\end{corollary}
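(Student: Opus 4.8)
The plan is to derive everything directly from Proposition~\ref{prop:boolsub}, which (away from the case $\CA^0_3(2)$) describes all modular elements of $L(\CA^k_\ell(r))$ explicitly. First I would dispose of the exceptional triple: if $(k,\ell,r)=(0,3,2)$, then $\CA^0_3(2)$ is the Coxeter arrangement of type $D_3=A_3$, which is supersolvable by Theorem~\ref{super}; so this case is compatible with the asserted equivalence. For the rest of the argument I assume the standing hypotheses $\ell\ge 3$, $0\le k\le\ell$, $r\ge 2$ and $\CA:=\CA^k_\ell(r)\not\cong\CA^0_3(2)$, so that by Proposition~\ref{prop:boolsub} the modular elements of $L(\CA)$ are exactly the members of $L(\CB^k_\ell)$ together with $\CA\cup\{0,\BBC^\ell\}$.

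For the implication ``$\Leftarrow$'', suppose $k\in\{\ell-1,\ell\}$. I would write down the descending chain
\[
\BBC^\ell > H_1 > H_1\cap H_2 > \cdots > H_1\cap\cdots\cap H_k
\]
in $L(\CA)$; when $k=\ell$ this already terminates at $H_1\cap\cdots\cap H_\ell=0$, and when $k=\ell-1$ I append the bottom element $0$ (permissible since $\CA$ is essential). Every intermediate term lies in $L(\CB^k_\ell)$ and is therefore modular by Lemma~\ref{lem:modularelements}, while $\BBC^\ell$ and $0$ are modular too; as the rank rises by exactly $1$ at each step, this is a maximal chain of modular elements, so $\CA$ is supersolvable by Definition~\ref{def:super}. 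For ``$\Rightarrow$'', suppose $\CA$ is supersolvable and fix, by Definition~\ref{def:super}, a maximal chain $\BBC^\ell=X_0<X_1<\cdots<X_{\ell-1}<X_\ell=0$ of modular elements. Then $\codim X_{\ell-1}=\ell-1$, i.e.\ $\dim X_{\ell-1}=1$; since $\ell\ge 3$, $X_{\ell-1}$ cannot be a hyperplane of $\CA$, nor $0$, nor $\BBC^\ell$, so Proposition~\ref{prop:boolsub} forces $X_{\ell-1}\in L(\CB^k_\ell)$. Hence $X_{\ell-1}=\bigcap_{i\in\Delta}H_i$ for some $\Delta\subseteq\{1,\dots,k\}$, and consequently $\ell-1=\codim X_{\ell-1}=|\Delta|\le k$, giving $k\in\{\ell-1,\ell\}$.

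I do not expect a genuine obstacle: the work is all contained in Proposition~\ref{prop:boolsub}, and the only points requiring a moment's care are that the displayed descending chain is truly maximal --- which is automatic, because in the ranked geometric lattice $L(\CA)$ any chain from $\BBC^\ell$ to $0$ of length $\ell$ consists of covers --- and that the triple $(0,3,2)$, which is deliberately excluded from Proposition~\ref{prop:boolsub}, has to be treated separately via Theorem~\ref{super}.
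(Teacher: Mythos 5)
Your proof is correct and follows exactly the route the paper intends for this corollary: it is stated there as a direct consequence of Proposition~\ref{prop:boolsub} (the classification of modular elements), with the Boolean chain from Lemma~\ref{lem:modularelements} giving supersolvability for $k\in\{\ell-1,\ell\}$, the absence of non-Boolean modular elements of dimension $1$ ruling out $k<\ell-1$, and the case $(0,3,2)$ handled separately as the $D_3=A_3$ Coxeter arrangement via Theorem~\ref{super}. You have simply written out the derivation the paper leaves implicit, so nothing further is needed.
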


In the remainder of this section we 
consider the case when $W$ is of exceptional type.
We start with a further consequence of Lemma \ref{lem:dim3}.

\begin{corollary}
\label{cor:4dim1mod}
Let $\CA = \CA(W)$ be an irreducible, non-supersolvable  
reflection arrangement of exceptional type.
Let $X \in L(\CA)$ with $\dim X = 4$. 
Then $\CA^X$ is supersolvable if and only if 
$L(\CA^X)$ 
contains a modular element of dimension $1$.
\end{corollary}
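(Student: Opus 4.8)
The plan is to obtain the statement by combining Lemmas \ref{lem:dim4s} and \ref{lem:show4nonmodular1}. Each restriction $\CA^X$ under consideration is essential of rank $4$ (for $W$ of exceptional type even irreducible, as one reads off from the orbit tables in \cite[App.]{orliksolomon:unitaryreflectiongroups}, \cite[App.\ C]{orlikterao:arrangements}); hence the forward implication is immediate from Definition \ref{def:super}, since a maximal chain of modular elements terminates in one of dimension $1$. By Lemma \ref{lem:dim4s}, such an $\CA^X$ is never supersolvable under the present hypotheses, so the reverse implication is equivalent to the assertion that $L(\CA^X)$ contains \emph{no} modular element of dimension $1$; proving this is the whole point.

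First I would settle the case where $W$ is not of type $E_6$, $E_7$, or $E_8$. Suppose $Z \in L(\CA^X)$ were modular of dimension $1$ and pick a hyperplane $H$ of $\CA^X$ with $Z \subseteq H$ (possible as $\dim Z < \dim X$). Since $L(\CA^X) = L(\CA)^X$, restriction to $H$ is intrinsic, so $(\CA^X)^H = \CA^H$ (cf.\ the proof of Lemma \ref{lem:interval}), a three-dimensional restriction of $\CA$; by Lemma \ref{lem:modularrestrictions} the element $Z$ is modular in $L(\CA^H)$, whence $\CA^H$ is supersolvable by Lemma \ref{lem:super-rank2}. This contradicts Lemma \ref{lem:dim3}, according to which no three-dimensional restriction of these $\CA$ is supersolvable. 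Thus $L(\CA^X)$ has no modular element of dimension $1$ in these cases, and this argument is insensitive to whether $\CA^X$ happens to be reducible.

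It remains to treat, for $W$ of type $E_6$, $E_7$ or $E_8$, the twelve restrictions $\CA^X$ listed in the proof of Lemma \ref{lem:dim4s}. Each of these is free, being a restriction of the inductively free reflection arrangement $\CA(E_6)$, $\CA(E_7)$ or $\CA(E_8)$ (cf.\ Theorem \ref{thm:indfree}), and irreducible. Whenever all hyperplane restrictions of such an $\CA^X$ are non-supersolvable, the argument of the previous paragraph applies verbatim. For the remaining ones, Lemmas \ref{lem:dim3} and \ref{lem:isoms} show that, up to lattice isomorphism, there is exactly one supersolvable hyperplane restriction $\CB = \CA^H$, namely one of $(E_6,A_3) \cong \CA_3^2(2)$, $(E_7,D_4) \cong \CA_3^3(2)$, or $(E_7,A_2^2)$, with least nontrivial exponent $b_1$ equal to $3$, $3$, or $5$ respectively; a computer-aided count of the hyperplanes $H'$ of $\CA^X$ with $L((\CA^X)^{H'}) \cong L(\CB)$ then gives $c \le b_1 + 1$ in every case. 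Lemma \ref{lem:show4nonmodular1} now yields that $L(\CA^X)$ has no modular element of dimension $1$, completing the proof.

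I expect the main obstacle to be this last step: for each of the twelve exceptional four-dimensional restrictions one has to determine precisely which three-dimensional restrictions occur as hyperplane restrictions, recognise the supersolvable one among them via Lemmas \ref{lem:dim3} and \ref{lem:isoms}, and verify the numerical bound $c \le b_1 + 1$. This lattice bookkeeping is where explicit computation appears unavoidable; the rest is a formal application of the modularity and restriction lemmas assembled in Section \ref{ssect:supersolve}.
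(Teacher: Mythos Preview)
Your overall strategy coincides with the paper's: reduce to showing that no four-dimensional restriction admits a modular element of dimension $1$, dispose of the non-$E$ types via Lemma \ref{lem:dim3} together with Lemmas \ref{lem:modularrestrictions} and \ref{lem:super-rank2}, and for the $E$-types either observe that all hyperplane restrictions are non-supersolvable or invoke Lemma \ref{lem:show4nonmodular1}.

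There is, however, a genuine gap in your final step. You assert that a computer-aided count yields $c \le b_1 + 1$ ``in every case'', but this is false for $(E_6,A_2)$ and $(E_8,A_4)$. In the first instance the unique supersolvable hyperplane restriction is $(E_6,A_3)$ with $b_1 = 3$, yet the number $c$ of hyperplanes of $(E_6,A_2)$ restricting to this type exceeds $4$; in the second instance the relevant restriction is $(E_8,A_5)\cong (E_7,A_2^2)$ with $b_1 = 5$, and again $c > 6$. Thus Lemma \ref{lem:show4nonmodular1} does not apply to these two restrictions, and your argument as written leaves them unresolved. The paper treats exactly these two cases separately, verifying by direct computer calculation that $L(\CA^X)$ contains no modular element of dimension $1$. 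You will need to do the same, or find an alternative ad hoc argument for these two instances.
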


\begin{proof}
Thanks to Lemma \ref{lem:dim3}, for every $W$ as in the statement, 
other than $W$ of type $E_6$, $E_7$ or $E_8$,
every $3$-dimensional restriction already fails to be supersolvable.
Consequently, using Lemma \ref{lem:super-rank2},
there are no modular elements of dimension $1$ in $L(\CA)$.

We are left to check the $4$-dimensional restrictions 
for  $W$ of type $E_6$, $E_7$ and $E_8$.

Using \cite[App.]{orliksolomon:unitaryreflectiongroups} (cf. \cite[App.\ C]{orlikterao:arrangements}), it is easily seen that 
each of 
$(E_7,(A_1^3)')$, $(E_8,A_1^4)$, $(E_8,A_1^2A_2)$, and $(E_8,D_4)$ has no 
supersolvable restriction to a hyperplane 
and therefore none of them admits a modular element of dimension $1$, 
by Lemmas \ref{lem:super-rank2} and \ref{cor:stanley}.

The next set of instances we consider are ones which 
have
only one type of restriction to a hyperplane which is supersolvable.
We can then use the condition of Lemma \ref{lem:show4nonmodular1} to show 
that none of these cases admits a modular element of dimension $1$.
We use the tables of \cite[App.]{orliksolomon:unitaryreflectiongroups}
(\cite[App.\ C]{orlikterao:arrangements}) to determine the number of restrictions
which are supersolvable and denote this number again by $c$, as in Lemma 
\ref{lem:show4nonmodular1}:\\
$(E_6,A_1^2)$: the supersolvable restriction has exponents $\{1,3,4\}$ and $c=3$.\\
$(E_7,(A_1^3)'')$: the supersolvable restriction has exponents $\{1,3,5\}$ and $c=1$.\\
$(E_7,(A_1A_2)')$: the supersolvable restriction has exponents $\{1,5,7\}$ and $c=4$.\\
$(E_7,A_3)$: the supersolvable restriction has exponents $\{1,3,5\}$and $c=3$.\\
$(E_8,A_2^2)$: the supersolvable restriction has exponents $\{1,5,7\}$ and $c=6$.\\
$(E_8,A_1A_3)$: the supersolvable restriction has exponents $\{1,5,7\}$ and $c=4$.\\

Unfortunately, Lemma \ref{lem:show4nonmodular1} does not apply for 
$(E_6,A_2)$ and $(E_8,A_4)$, since $c$ is too big in these 
instances. Using a computer, we checked directly 
that there are no modular elements of dimension $1$ in these two cases
(cf.\ Remark \ref{rem:computations}).
\end{proof}

\begin{corollary}
\label{cor:dim1mod}
Let $\CA = \CA(W)$ be an irreducible, non-supersolvable  
reflection arrangement of exceptional type.
Let $X \in L(\CA)$ with $\dim X \ge 3$. Then $\CA^X$ is supersolvable if and only if $L(\CA^X)$ contains a modular
element of dimension $1$.
\end{corollary}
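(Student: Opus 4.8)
The plan is to reduce the statement to the two dimension ranges that have already been handled separately in the excerpt, namely $\dim X = 3$ (Lemma \ref{lem:super-rank2} together with Lemma \ref{lem:dim3}) and $\dim X = 4$ (Corollary \ref{cor:4dim1mod}), and then to propagate the $4$-dimensional case upward via Corollary \ref{cor:stanley}. One direction is immediate in all dimensions: if $L(\CA^X)$ contains a modular element of dimension $1$ then, since $L(\CA^X)$ is a geometric lattice with a modular atom relative to the coatom, a standard argument (or directly Definition \ref{def:super} combined with the fact that a modular element of corank one forces a maximal chain of modular elements below it — indeed for a $3$-arrangement this is exactly Lemma \ref{lem:super-rank2}) shows supersolvability. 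So the real content is the forward implication: $\CA^X$ supersolvable $\Rightarrow$ $L(\CA^X)$ has a modular element of dimension $1$.

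First I would dispose of $\dim X = 3$: here $\CA^X$ is a $3$-arrangement, so by Lemma \ref{lem:super-rank2} supersolvability of $\CA^X$ is \emph{equivalent} to the existence of a modular element of dimension $1$ in $L(\CA^X)$, and there is nothing further to prove. Next, for $\dim X = 4$, the equivalence is precisely the content of Corollary \ref{cor:4dim1mod}, so again nothing new is needed. The remaining case is $\dim X \ge 5$. Suppose $\CA^X$ is supersolvable. Then $\CA^X$ is essential of rank $\dim X \ge 5$ and carries a maximal chain of modular elements $V = X_0 < X_1 < \cdots$ in $L(\CA^X)$; choose the element $Y$ of this chain with $\dim Y = 4$. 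By Corollary \ref{cor:stanley} the interval $[Y, 0]$ — which by Lemma \ref{lem:interval} is $L((\CA^X)^Y) = L(\CA^{X \cap Y}) = L(\CA^Z)$ for the appropriate $Z \in L(\CA)$ with $\dim Z = 4$ — is supersolvable. (Here $\CA$ is still irreducible, non-supersolvable of exceptional type, since otherwise we would be in case (i) of Theorem \ref{super-restriction} and $\CA$ itself supersolvable, in which case $\CA^X$ is supersolvable and the statement of Corollary \ref{cor:dim1mod} is anyway subsumed; but strictly we only need $Z \in L(\CA)$.) Thus $\CA^Z$ is a supersolvable $4$-arrangement, and by Corollary \ref{cor:4dim1mod} — more simply, by Definition \ref{def:super} applied to $\CA^Z$ — $L(\CA^Z)$ contains a modular element $W_0$ of dimension $1$. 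Since $L(\CA^Z) = [Y,0]$ is an interval in $L(\CA^X)$, $W_0$ is an element of $L(\CA^X)$ of dimension $1$; it remains to check it is modular in the full lattice $L(\CA^X)$. But $W_0$ sits inside the modular flag of $\CA^X$: taking the chosen maximal modular chain $V = X_0 < \cdots < X_{\dim X -4} = Y < \cdots$ of $\CA^X$ and, below $Y$, replacing the part $Y > X_{\dim X - 3} > \cdots > 0$ by a maximal modular chain of the supersolvable interval $[Y,0]$ that passes through $W_0$, we obtain a maximal chain of modular elements of $L(\CA^X)$ containing $W_0$; hence $W_0$ is modular of dimension $1$ in $L(\CA^X)$, as desired.

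The main obstacle — or rather the only point that needs a little care — is the splicing of the two modular chains in that last step: one must verify that if $X_i$ are modular in $L(\CA^X)$ for $i \le \dim X - 4$ and $Y_j$ are modular in the interval $[Y,0]$, then the $Y_j$ are also modular in all of $L(\CA^X)$. This is exactly Lemma \ref{lem:modularrestrictions} read in the opposite direction is \emph{not} available; instead I would argue that modularity of $Y_j$ in $[Y,0]$ together with modularity of $Y$ in $L(\CA^X)$ gives modularity of $Y_j$ in $L(\CA^X)$: for any $U \in L(\CA^X)$, $Y_j + U = Y_j + (Y \cap (Y_j + U))$ since $Y_j \le Y$, and $Y \cap (Y_j + U) = Y \cap (Y + U) \cap (Y_j + U)$; one then uses that $Y + U \in L(\CA^X)$ (modularity of $Y$) lies in $[Y,0]$ and that $Y_j$ is modular there, together with the geometric-lattice identity relating $Y_j + U$ and $Y_j + (Y \cap U)$ as in the proof of \cite[Cor.~2.27]{orlikterao:arrangements}. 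Alternatively, and more cleanly, one avoids the splicing altogether by invoking Corollary \ref{cor:4dim1mod} only to conclude that $L(\CA^Z)$ is \emph{not} supersolvable for those $4$-dimensional $Z$ lacking a modular line, so that — by the contrapositive via Corollary \ref{cor:stanley} — no $\CA^X$ with $\dim X \ge 4$ over such a base can be supersolvable and can therefore harmlessly be ignored; for the $\CA^X$ that \emph{are} supersolvable, Lemma \ref{lem:dim3} pins down $\CA^X$ (for $\dim X = 3$) and Lemma \ref{lem:dim4s} shows there are none with $\dim X = 4$, so in fact only the $\dim X = 3$ cases of Lemma \ref{lem:dim3} survive, each of which does contain a modular line by the explicit verification in the proof of that lemma. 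I expect to present the argument in the latter form, since it reuses the machinery of Section \ref{sect:proof1} verbatim.
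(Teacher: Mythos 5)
There is a genuine gap, and it comes from having the difficulty of the two implications backwards. The implication that is actually immediate is ``$\CA^X$ supersolvable $\Rightarrow$ $L(\CA^X)$ has a modular element of dimension $1$'': a maximal chain of modular elements in the essential lattice $L(\CA^X)$ contains an element of every dimension, in particular of dimension $1$ (this is how the paper disposes of the forward implication of Theorem \ref{super-restriction-rank2}, ``clear from Definition \ref{def:super}''); your elaborate chain-splicing argument, and the worry about lifting modularity from the interval $[Y,0]$ to $L(\CA^X)$, are therefore unnecessary. What is \emph{not} immediate is the converse, which you dismiss in your first paragraph: it is false in general that a geometric lattice with a modular coatom (a modular element of corank one) is supersolvable, and equally false that such an element ``forces a maximal chain of modular elements below it'' --- for that one needs, as in Stanley's criterion, that the interval below the coatom (i.e.\ $L(\CA_Z)$ for the modular line $Z$) is itself supersolvable, which is automatic only in rank $3$ (Lemma \ref{lem:super-rank2}). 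Indeed, if your claimed ``standard argument'' were valid, Theorem \ref{super-restriction-rank2} would be a triviality, whereas the paper must restrict to irreducible arrangements of rank $\ge 4$ (Remark \ref{rem:super-restriction-rank2}(ii) records that irreducibility is genuinely needed) and must work hard inside Corollary \ref{cor:4dim1mod} (Lemma \ref{lem:show4nonmodular1} plus computer checks for $(E_6,A_2)$ and $(E_8,A_4)$) precisely to rule out modular lines in non-supersolvable $4$-dimensional restrictions.

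Concretely, the case your proposal leaves unproved is $\dim X \ge 5$: there $\CA^X$ is never supersolvable (Corollary \ref{cor:dimge4}), so the content of Corollary \ref{cor:dim1mod} is that $L(\CA^X)$ contains \emph{no} modular element of dimension $1$, and neither your main argument nor your ``cleaner'' alternative (which only invokes Corollary \ref{cor:4dim1mod} to discard supersolvable $\CA^X$, i.e.\ only feeds the trivial direction) addresses this. The paper's argument is short but essential: if $Z \in L(\CA^X)$ were modular of dimension $1$, then by Lemma \ref{lem:modularrestrictions} $Z$ remains modular in $L((\CA^X)^H)$ for any $H \in \CA^X$ containing $Z$, and iterating (using Lemma \ref{lem:interval} to identify these with restrictions $\CA^Y$, $Y \in L(\CA)$) one reaches a $4$-dimensional restriction whose lattice has a modular element of dimension $1$; by Corollary \ref{cor:4dim1mod} that restriction would be supersolvable, contradicting Lemma \ref{lem:dim4s}. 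This descent via Lemma \ref{lem:modularrestrictions} is the missing idea; without it (or a correct substitute), your proof does not establish the corollary for $\dim X \ge 5$.
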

\begin{proof}
For $\dim X = 3$, 
the assertion follows from 
Lemma \ref{lem:super-rank2} and Corollary \ref{cor:4dim1mod} shows that 
this also holds for $\dim X = 4$. Since all restrictions $\CA^X$ with $\dim X\ge 4$
are not supersolvable for the exceptional types, the result follows for $\dim X>4$
by Lemma \ref{lem:modularrestrictions}.
\end{proof}

\begin{proof}[Proof of Theorem \ref{super-restriction-rank2}]
The forward implication is clear from Definition \ref{def:super}.

The reverse implication follows 
for general central arrangements of rank up to $3$
by \cite[Rem.\ 2.3]{hogeroehrle:super} and Lemma \ref{lem:super-rank2}.
For $W = G(r, p, \ell)$, 
the result is  Corollary \ref{cor:modelem1}.
Finally, for $W$ of exceptional type and 
$X \in L(\CA)$ with $\dim X \geq 4$
such that $\CA^X$ is not supersolvable,
the result follows from 
Corollary \ref{cor:dim1mod}.

Note that 
Corollary \ref{cor:modelem1} and
Corollary \ref{cor:dim1mod} also 
apply in the case $X = V$.
This gives the equivalence in the statement for the underlying  
reflection arrangement $\CA(W)$ itself 
(cf.\ Remark \ref{rem:super-restriction-rank2}(i)).
\end{proof}

\begin{remark}
\label{rem:computations}
In order 
to establish several of our results 
we first use the functionality for complex reflection groups 
provided by the   \CHEVIE\ package in   \GAP\ 
(and some \GAP\ code by J.~Michel)
(see \cite{gap3} and \cite{chevie})
in order to obtain explicit 
linear functionals $\alpha$ defining the hyperplanes 
$H = \ker \alpha$ of the reflection arrangement
$\CA(W)$ and the relevant restrictions $\CA(W)^X$.
 
We then use the functionality of
\Sage\  (\cite{sage}) to determine the data in 
Table \ref{table:rank3restrictions} and in the proof of Lemma \ref{lem:dim3}.
Moreover, \Sage\ was used
to establish the fact that 
the two restrictions 
$(E_6, A_2)$ and $(E_8, A_4)$
do not admit modular elements of dimension $1$
in the proof of Corollary \ref{cor:4dim1mod}.
\end{remark}


\bigskip {\bf Acknowledgments}: 
We acknowledge 
support from the DFG-priority program 
SPP1489 ``Algorithmic and Experimental Methods in
Algebra, Geometry, and Number Theory''.


\bigskip

\bibliographystyle{amsalpha}

\newcommand{\etalchar}[1]{$^{#1}$}
\providecommand{\bysame}{\leavevmode\hbox to3em{\hrulefill}\thinspace}
\providecommand{\MR}{\relax\ifhmode\unskip\space\fi MR }
\providecommand{\MRhref}[2]{%
  \href{http://www.ams.org/mathscinet-getitem?mr=#1}{#2} }
\providecommand{\href}[2]{#2}


\end{document}